\documentclass[11pt,a4]{article}
\usepackage{latexsym}
\usepackage{amsmath}
\usepackage{amssymb}
\usepackage{amsthm}
\usepackage{amscd}
\usepackage{color}
\usepackage[utf8]{inputenc}
\definecolor{dblue}{rgb}{0,0,0.45}
\definecolor{red}{rgb}{0.7,0,0}
\numberwithin{equation}{section}
\numberwithin{equation}{section}
\usepackage{color}

\setlength{\topmargin}{0in}
\setlength{\oddsidemargin}{0.35in}
\setlength{\evensidemargin}{0.35in}
\setlength{\textwidth}{5.7in}
\setlength{\textheight}{8.7in}
\setlength{\parskip}{3mm}
\numberwithin{equation}{section}

\newtheorem{theorem}{Theorem}[section]
\newtheorem{lemma}[theorem]{Lemma}
\newtheorem{corollary}[theorem]{Corollary}
\newtheorem{proposition}[theorem]{Proposition}

\theoremstyle{definition}

\newtheorem{remark}[theorem]{Remark}
\newtheorem{definition}[theorem]{Definition}

\theoremstyle{remark}

\newcommand{\cM}{{\mathcal M}}

\newcommand{\N}{{\mathbb N}}
\newcommand{\R}{{\mathbb R}}

\newcommand{\cG}{{\mathcal G}}

\title{Complex interpolation of vanishing Morrey spaces}
\author{Denny Ivanal Hakim$^{1}$ and Yoshihiro Sawano$^{2,*}$\\
$^1$Department of Mathematics, Bandung Institute of Technology,\\
Jl. Ganesha 10 Bandung 40132, Indonesia.
\\
$^2$Department of Mathematics and Information Sciences, \\ Tokyo Metropolitan University, 1-1 Minami-Osawa,\\
	Hachioji-shi, Tokyo, 192-0397, Japan \\
\footnote{
Yoshihiro Sawano is also affiliated to
Peoples' Friendship University of Russian, Moscow, Russia.
}
Email: $^1$dennyivanalhakim@gmail.com, 
$^{2}$yoshihiro-sawano@celery.ocn.ne.jp	\\}
\begin{document}
\maketitle

\begin{abstract}
We give the description of the first and second complex interpolation of vanishing Morrey spaces, 
introduced 
in \cite{AS, CF}.
In addition, we show that
the diamond subspace (see \cite{HNS}) and one of the function spaces in \cite{AS} are the same.
We also give several examples for showing that each of the complex interpolation of these spaces is different. \\
\noindent
{\bf Classification: 42B35, 46B70, 46B26}

\noindent
Keywords: Morrey spaces, vanishing Morrey spaces, complex interpolation
\end{abstract}

\section{Introduction}
Let $1\le q\le p<\infty$. The Morrey space 
$\cM^p_q=\cM^p_q(\mathbb{R}^n)$, introduced in 
\cite{Mo}, is defined as the set of all $f\in L^q_{\rm loc}(\mathbb{R}^n)$ for which
\[
\|f\|_{\cM^p_q}
:=
\sup_{r>0}
m(f,p,q;r)<\infty,
\]
where
\[
m(f,p,q;r)
:=
\sup_{x\in \mathbb{R}^n}
|B(x,r)|^{\frac1p}
\left(
\frac{1}{|B(x,r)|}
\int_{B(x,r)} |f(y)|^q \ dy \right)^{\frac1q},\  (r>0).
\]
Note that, for $p=q$, $\cM^p_q$ coincides with the Lebesgue space $L^p$. 
Meanwhile, if $p<q$, then $\cM^p_q$ is strictly larger than $L^p$. For instance, the function $f(x):=|x|^{-\frac{n}{p}}$ belongs to $\cM^p_q$ 
but it is not in $L^p$. 

As a generalization of Lebesgue spaces, one may inquire whether the interpolation of linear operators 
in Morrey spaces also holds. 
The first answer of this question was given by G. Stampacchia in \cite{St}. He proved a partial generalization of the Riesz-Thorin interpolation theorem in Morrey spaces where the domain of the linear operator is assumed to be the Lebesgue spaces. However, when the domain of the linear operator is Morrey spaces,  
there are some counterexamples for the interpolation of linear operator in Morrey spaces (see \cite{BRV, RV}). Although these examples show the lack of interpolation property of Morrey spaces,
there are some recent results about the description of complex interpolation of Morrey spaces. 
The first result in this direction can be found in \cite{CPP}, where the authors proved that if
\begin{align}\label{eq:p0q0}
\theta \in (0,1), 1\le q_0\le p_0<\infty, 1\le q_1\le p_1<\infty
\end{align}
and 
\begin{align}\label{eq:pq}
\frac1p:=
\frac{1-\theta}{p_0}
+
\frac{\theta}{p_1}
\quad 
{\rm and}
\quad
\frac1q:=
\frac{1-\theta}{q_0}
+
\frac{\theta}{q_1},
\end{align}
then
\begin{align}\label{eq:CPP}
[\cM^{p_0}_{q_0}, \cM^{p_1}_{q_1}]_\theta
\subseteq \cM^{p}_{q}.
\end{align}
Here, $[\cdot, \cdot]_\theta$ denotes the first complex interpolation space. 
Assuming the additional assumption 
\begin{align}\label{eq:prop}
\frac{p_0}{q_0}
=
\frac{p_1}{q_1},
\end{align}
Lu et al. \cite{LYY} proved that 
\begin{align}
[\cM^{p_0}_{q_0}, \cM^{p_1}_{q_1}]_\theta
=
\overline{\cM^{p_0}_{q_0} \cap \cM^{p_1}_{q_1}}
^{\cM^p_q}. 
\end{align}
The corresponding result on the second complex interpolation spaces was obtained by Lemari\'e-Rieusset \cite{L14}. 
A generalization  of the results in \cite{LYY, L14} 
in the setting of the generalized Morrey spaces can be seen in \cite{HS, HS2}. 

In addition to complex interpolation of Morrey spaces,  there are several papers on the description of complex interpolation of some closed subspaces of Morrey spaces. 
For instance, Yang et al. \cite{YYZ} proved that 
\begin{align}\label{eq:YYZ}
[\overset{\circ}{\cM}{}^{p_0}_{q_0},
\overset{\circ}{\cM}{}^{p_1}_{q_1}]_\theta
=
\overset{\circ}{\cM}{}^{p}_{q},
\end{align}
where the parameters are given by \eqref{eq:p0q0} and \eqref{eq:pq}
and
$\overset{\circ}{\cM}{}^{p}_{q}$ denotes the closure in $\cM^p_q$ of the set of  smooth functions with compact support. 
Other results on complex interpolation of  closed subspaces of Morrey spaces are considered in \cite{HS2, HNS, HS3, H, YSY}. In particular, the authors in \cite{HS2} consider the space $\overline{\mathcal{M}}{}^p_q:=\overline{L^\infty \cap \mathcal{M}^p_q}^{\mathcal{M}^p_q}$

In this article, we shall investigate complex interpolation of vanishing Morrey spaces. These spaces were introduced in \cite{ AS, CF}. 
Let us recall the their definition as follows. 
\begin{definition}
Let $1\le q\le p<\infty$.
The vanishing Morrey space at the origin $V_0\cM^p_q$ and the vanishing Morrey space at infinity $V_\infty\cM^p_q$ are defined by 
\[
V_0\cM^p_q:=
\{f\in \cM^p_q:
\lim_{r\to 0} m(f,p,q;r)=0\}
\]
and
\[ 
V_\infty\cM^p_q:=
\{f\in \cM^p_q:
\lim_{r\to \infty} m(f,p,q;r)=0\},
\]
respectively. 
The third subspace is the space $V^{(*)}\cM^p_q$ which is defined to be the set of all functions $f\in \cM^p_q$ such that 
\[
\lim_{N\to \infty}
\sup_{x\in \mathbb{R}^n}
\int_{B(x,1)}
|f(y)|^q \chi_{\mathbb{R}^n \setminus B(0,N)}(y) \ dy=0.
\]
\end{definition}

Our main results are the following two theorems.
\begin{theorem}\label{thm:171213-1}
Assume \eqref{eq:p0q0}, \eqref{eq:prop}, and $q_0\neq q_1$. 
Define $p$ and $q$ by \eqref{eq:pq}. 
Then 
\begin{align}\label{eq:171213-2}
[V_0{\mathcal M}^{p_0}_{q_0},V_0{\mathcal M}^{p_1}_{q_1}]_\theta 
&=[{\mathcal M}^{p_0}_{q_0},{\mathcal M}^{p_1}_{q_1}]_\theta
\notag\\
&=
\left\{f\in  \cM^p_q:
\lim_{N\to \infty}\|f-\chi_{\{\frac{1}{N}\le |f|\le N\}}f\|_{\cM^p_q}=0\right\},
\end{align}
\begin{align}\label{eq:171213-3}
[V_\infty{\mathcal M}^{p_0}_{q_0}, V_\infty {\mathcal M}^{p_1}_{q_1}]_\theta 
&=
V_\infty\cM^p_q \cap [{\mathcal M}^{p_0}_{q_0},{\mathcal M}^{p_1}_{q_1}]_\theta
\notag \\
&=
\left\{f\in V_\infty\cM^p_q:
\lim_{N\to \infty}\|f-\chi_{\{\frac{1}{N}\le |f|\le N\}}f\|_{\cM^p_q}=0\right\},
\end{align}
and 
\begin{align}\label{eq:171213-4}
[V^{(*)}{\mathcal M}^{p_0}_{q_0},V^{(*)}{\mathcal M}^{p_1}_{q_1}]_\theta 
&=
V^{(*)}\cM^p_q \cap [{\mathcal M}^{p_0}_{q_0},{\mathcal M}^{p_1}_{q_1}]_\theta
\notag \\
&=
\left\{f\in V^{(*)}\cM^p_q:
\lim_{N\to \infty}\|f-\chi_{\{\frac{1}{N}\le |f|\le N\}}f\|_{\cM^p_q}=0\right\}.
\end{align}
\end{theorem}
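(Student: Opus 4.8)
The plan is to deduce all three identities from the description $[\mathcal M^{p_0}_{q_0},\mathcal M^{p_1}_{q_1}]_\theta=\overline{\mathcal M^{p_0}_{q_0}\cap\mathcal M^{p_1}_{q_1}}^{\,\mathcal M^p_q}$ of \cite{LYY} together with one ``substitution lemma''. Put $\lambda:=p_0/q_0=p_1/q_1$, so that \eqref{eq:pq} and \eqref{eq:prop} force $p/q=\lambda$ and hence $1/p_j=q/(pq_j)$ for $j=0,1$; this scaling identity is what makes the argument work. Write $\widetilde{\mathcal M}^p_q$ for the space on the right of \eqref{eq:171213-2}--\eqref{eq:171213-4}, namely $\{f\in\mathcal M^p_q:\lim_N\|f-\chi_{\{1/N\le|f|\le N\}}f\|_{\mathcal M^p_q}=0\}$, and set $f_N:=\chi_{\{1/N\le|f|\le N\}}f$. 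Two families of elementary estimates are used repeatedly: first, log-convexity of $L^s$-averages in $1/s$ together with \eqref{eq:pq} gives $m(f,p,q;r)\le m(f,p_0,q_0;r)^{1-\theta}m(f,p_1,q_1;r)^{\theta}$ and the analogous bound for $\sup_x\int_{B(x,1)\setminus B(0,N)}|f|^q$, which yield $V\mathcal M^{p_0}_{q_0}\cap V\mathcal M^{p_1}_{q_1}\subseteq V\mathcal M^p_q$ for each $V\in\{V_0,V_\infty,V^{(*)}\}$; second, the level-set bounds $\int_B|f|^q\chi_{\{|f|<1/N\}}\le N^{q_0-q}\int_B|f|^{q_0}$ and $\int_B|f|^q\chi_{\{|f|>N\}}\le N^{q-q_1}\int_B|f|^{q_1}$ show $\mathcal M^{p_0}_{q_0}\cap\mathcal M^{p_1}_{q_1}\subseteq\widetilde{\mathcal M}^p_q$, while $m(f_N,p,q;r)\le c_n N r^{n/p}$ shows each $f_N\in V_0\mathcal M^p_q$. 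Granting in addition that $\widetilde{\mathcal M}^p_q$ is closed in $\mathcal M^p_q$ and that each $f_N$ lies in $\mathcal M^{p_0}_{q_0}\cap\mathcal M^{p_1}_{q_1}$ (a bounded function whose modulus is bounded below on its support belongs to both endpoint spaces, again by $1/p_j=q/(pq_j)$), the convergence $f_N\to f$ identifies $\widetilde{\mathcal M}^p_q=\overline{\mathcal M^{p_0}_{q_0}\cap\mathcal M^{p_1}_{q_1}}^{\,\mathcal M^p_q}=[\mathcal M^{p_0}_{q_0},\mathcal M^{p_1}_{q_1}]_\theta$ (the last by \cite{LYY}), and, since $f_N\in V_0\mathcal M^p_q$, also $\widetilde{\mathcal M}^p_q\subseteq V_0\mathcal M^p_q$. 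In particular the second equalities in \eqref{eq:171213-3}--\eqref{eq:171213-4} and the last equality in \eqref{eq:171213-2} hold, and it remains to treat the first equalities.

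For the inclusion ``$\subseteq$'' in each of these, monotonicity of the first complex interpolation functor gives $[V\mathcal M^{p_0}_{q_0},V\mathcal M^{p_1}_{q_1}]_\theta\hookrightarrow[\mathcal M^{p_0}_{q_0},\mathcal M^{p_1}_{q_1}]_\theta\hookrightarrow\mathcal M^p_q$, the last embedding by \cite{CPP}; since $V\mathcal M^{p_0}_{q_0}\cap V\mathcal M^{p_1}_{q_1}$ is dense in $[V\mathcal M^{p_0}_{q_0},V\mathcal M^{p_1}_{q_1}]_\theta$ and the interpolation norm dominates the $\mathcal M^p_q$-norm, every element of $[V\mathcal M^{p_0}_{q_0},V\mathcal M^{p_1}_{q_1}]_\theta$ is an $\mathcal M^p_q$-limit of elements of $V\mathcal M^{p_0}_{q_0}\cap V\mathcal M^{p_1}_{q_1}$, hence lies in $\overline{V\mathcal M^{p_0}_{q_0}\cap V\mathcal M^{p_1}_{q_1}}^{\,\mathcal M^p_q}\subseteq V\mathcal M^p_q$ (using the first elementary estimate and closedness of $V\mathcal M^p_q$). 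Combined with the embedding into $[\mathcal M^{p_0}_{q_0},\mathcal M^{p_1}_{q_1}]_\theta=\widetilde{\mathcal M}^p_q$, this gives $[V\mathcal M^{p_0}_{q_0},V\mathcal M^{p_1}_{q_1}]_\theta\subseteq V\mathcal M^p_q\cap\widetilde{\mathcal M}^p_q$, which is the ``$\subseteq$'' half of all three lines (for $V_0$ recall $\widetilde{\mathcal M}^p_q\subseteq V_0\mathcal M^p_q$).

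The substantive half is ``$\supseteq$'', and here I would prove: \emph{if $V\in\{V_0,V_\infty,V^{(*)}\}$, $g\in V\mathcal M^p_q$ and $1/K\le|g|\le K$ a.e.\ on $\{g\ne0\}$ for some $K\ge1$, then $g\in[V\mathcal M^{p_0}_{q_0},V\mathcal M^{p_1}_{q_1}]_\theta$ with $\|g\|_{[V\mathcal M^{p_0}_{q_0},V\mathcal M^{p_1}_{q_1}]_\theta}\le\max\!\big(\|g\|_{\mathcal M^p_q}^{q/q_0},\|g\|_{\mathcal M^p_q}^{q/q_1}\big)$.} The witness is the Calder\'on-type function $F(z):=|g|^{q((1-z)/q_0+z/q_1)}\,\mathrm{sgn}(g)$ on the closed strip $\{0\le\mathrm{Re}\,z\le1\}$; one has $F(\theta)=g$ because $q((1-\theta)/q_0+\theta/q_1)=1$. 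The decisive observation is $|F(j+it)|^{q_j}=|g|^q$ identically, so by $1/p_j=q/(pq_j)$ the powers of the ball volume cancel and $m(F(j+it),p_j,q_j;r)=m(g,p,q;r)^{q/q_j}$, and likewise $\int_{B(x,1)\setminus B(0,N)}|F(j+it)|^{q_j}=\int_{B(x,1)\setminus B(0,N)}|g|^q$; hence $\|F(j+it)\|_{\mathcal M^{p_j}_{q_j}}=\|g\|_{\mathcal M^p_q}^{q/q_j}$ uniformly in $t$, and $F(j+it)$ inherits the relevant vanishing property of $g$, so $F(j+it)\in V\mathcal M^{p_j}_{q_j}$. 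The remaining requirements of the Calder\'on class---$t$-continuity of $F(j+i\cdot)$ into $V\mathcal M^{p_j}_{q_j}$ and boundedness and holomorphy of $F$ on the strip into $V\mathcal M^{p_0}_{q_0}+V\mathcal M^{p_1}_{q_1}$---are exactly where $1/K\le|g|\le K$ is used: it upgrades the pointwise formula to the uniform estimates $\big||g|^{w}-1\big|\le|w|\,\log K\,e^{|w|\log K}$ (for the exponents $w$ that occur) and $|F(z)|\le C_K\,|g|^{q/q_0}$, so all Banach-valued estimates reduce to $L^\infty$-type estimates against the single fixed function $|g|^{q/q_0}\in V\mathcal M^{p_0}_{q_0}$, with no dominated-convergence principle needed; the complex derivative is handled by the Taylor bound $|e^{a+b}-e^a-be^a|\le\frac12|b|^2e^{|a|+|b|}$. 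Granting the lemma, the reverse inclusion follows by truncation: for $f$ in the claimed right-hand side, each $f_N$ and each difference $f_N-f_M$ is bounded above and below on its support ($1/N\le|f_N-f_M|\le N$ there) and lies in $V\mathcal M^p_q$ (because $|f_N|\le|f|$, resp.\ $|f_N-f_M|\le N$), so the lemma yields $\|f_N-f_M\|_{[V\mathcal M^{p_0}_{q_0},V\mathcal M^{p_1}_{q_1}]_\theta}\le\max\!\big(\|f_N-f_M\|_{\mathcal M^p_q}^{q/q_0},\|f_N-f_M\|_{\mathcal M^p_q}^{q/q_1}\big)\to0$ since $f\in\widetilde{\mathcal M}^p_q$; thus $(f_N)$ is Cauchy in the Banach space $[V\mathcal M^{p_0}_{q_0},V\mathcal M^{p_1}_{q_1}]_\theta$, its limit agrees with $f$ (both target spaces embed continuously into $L^1_{\mathrm{loc}}$), and so $f\in[V\mathcal M^{p_0}_{q_0},V\mathcal M^{p_1}_{q_1}]_\theta$.

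The step I expect to be the main obstacle is precisely the verification that $F$ belongs to the Calder\'on class $\mathcal F(V\mathcal M^{p_0}_{q_0},V\mathcal M^{p_1}_{q_1})$: that its boundary traces lie, continuously in $t$, in the vanishing subspaces, and that $F$ is bounded and holomorphic into their sum. Morrey-type spaces are neither separable nor endowed with a dominated-convergence theorem, and the two-sided pointwise control $1/K\le|g|\le K$ on $\{g\ne0\}$ is the device that converts these obstructions into uniform estimates against a single fixed Morrey function---which is why the lemma is first proved for such $g$ and only then bootstrapped, via the truncations $f_N$, to all of $\widetilde{\mathcal M}^p_q$. A lesser, bookkeeping-type point is the identification of the closure-of-intersection space of \cite{LYY} (equivalently the diamond subspace of \cite{HNS}) with the explicit space $\widetilde{\mathcal M}^p_q$, but this rests only on the level-set estimates recorded in the first paragraph.
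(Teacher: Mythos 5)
Your proposal is correct and follows essentially the same route as the paper: the ``$\subseteq$'' inclusions via density of $X_0\cap X_1$ in $[X_0,X_1]_\theta$ plus the H\"older inequality $m(f,p,q;r)\le m(f,p_0,q_0;r)^{1-\theta}m(f,p_1,q_1;r)^{\theta}$ and closedness of the vanishing subspaces, and the ``$\supseteq$'' inclusions via the Calder\'on power function supported on the level set $\{1/N\le|f|\le N\}$ (your $F$ with exponent $q((1-z)/q_0+z/q_1)$ is the paper's $F_N$ with exponent $p((1-z)/p_0+z/p_1)$, identical under \eqref{eq:prop}) followed by the same Cauchy-sequence-of-truncations argument. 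The only cosmetic differences are that you rederive the description of $[\mathcal M^{p_0}_{q_0},\mathcal M^{p_1}_{q_1}]_\theta$ from \cite{LYY} via level-set estimates where the paper cites \cite{HS2} directly, and you treat the $V_0$ case uniformly with the others where the paper dispatches it by sandwiching $V_0\mathcal M^{p_j}_{q_j}$ between $\overline{\mathcal M}{}^{p_j}_{q_j}$ and $\mathcal M^{p_j}_{q_j}$.
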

\begin{theorem}\label{thm:171213-2}
Assume \eqref{eq:p0q0} and \eqref{eq:prop}. 
Define $p$ and $q$ by \eqref{eq:pq}. 
\begin{align}\label{eq:thm:171213-2-1}
[V_0{\mathcal M}^{p_0}_{q_0},V_0{\mathcal M}^{p_1}_{q_1}]^\theta 
=
{\mathcal M}^p_q,
\end{align}
\begin{align}\label{eq:thm:171213-2-2}
[V_\infty{\mathcal M}^{p_0}_{q_0}, V_\infty{\mathcal M}^{p_1}_{q_1}]^\theta 
=
\{f \in {\mathcal M}^p_q \,: 
\chi_{\{a\le |f|\le b\}} f\in V_\infty\cM^p_q 
{\ \rm for \ all } \ 0<a<b<\infty
\},
\end{align}
and 
\begin{align}\label{eq:thm:171213-2-3}
[V^{(*)}{\mathcal M}^{p_0}_{q_0}, V^{(*)}{\mathcal M}^{p_1}_{q_1}]^\theta 
=
\{f \in {\mathcal M}^p_q \,: 
\chi_{\{a\le |f|\le b\}} f\in V^{(*)}\cM^p_q 
{\ \rm for \ all } \ 0<a<b<\infty
\}. 
\end{align}
\end{theorem}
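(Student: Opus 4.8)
\emph{Proof strategy.} The plan is to establish each of \eqref{eq:thm:171213-2-1}--\eqref{eq:thm:171213-2-3} by a double inclusion, treating the three vanishing spaces in parallel. Write $V\cM^p_q$ for any one of $V_0\cM^p_q$, $V_\infty\cM^p_q$, $V^{(*)}\cM^p_q$, and put $Y_V:=\{f\in\cM^p_q: \chi_{\{a\le|f|\le b\}}f\in V\cM^p_q \text{ for all }0<a<b<\infty\}$; since a bounded element of $\cM^p_q$ always lies in $V_0\cM^p_q$, one has $Y_{V_0}=\cM^p_q$, so in all three cases the claim reads $[V\cM^{p_0}_{q_0},V\cM^{p_1}_{q_1}]^\theta=Y_V$. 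I would rely on: (i) the second-method identity $[\cM^{p_0}_{q_0},\cM^{p_1}_{q_1}]^\theta=\cM^p_q$ for the full spaces \cite{L14}, together with monotonicity of the functor $[\cdot,\cdot]^\theta$; (ii) the description $[X_0,X_1]^\theta=\{g'(\theta):g\in\cG(X_0,X_1)\}$; (iii) the homogeneity identity valid under \eqref{eq:prop} (equivalently $p_j/q_j=p/q$): $m(\chi_E|h|^{s},p/s,q/s;r)=m(\chi_E h,p,q;r)^{s}$ for every measurable $E$, every $s>0$ and every $r>0$, so that $\chi_E h\in V\cM^p_q$ iff $\chi_E|h|^{q/q_j}\in V\cM^{p_j}_{q_j}$, and a level band of a bounded $\cM^p_q$ function lies in $\cM^{p_j}_{q_j}$; (iv) multiplication by an indicator $\chi_E$ is a norm-$\le1$ operator on each of $\cM^p_q$, $V_0\cM^p_q$, $V_\infty\cM^p_q$, $V^{(*)}\cM^p_q$ (these are solid), hence by functoriality on each interpolation space below.

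For $Y_V\subseteq[V\cM^{p_0}_{q_0},V\cM^{p_1}_{q_1}]^\theta$: fix $f\in Y_V$ and set $f_N:=\chi_{\{1/N\le|f|\le N\}}f$, which is bounded, lies in $V\cM^p_q$, and by (iii) in $\cM^{p_0}_{q_0}\cap\cM^{p_1}_{q_1}$. With $\alpha(z):=(1-z)q/q_0+zq/q_1$ (so $\alpha(\theta)=1$), consider $G_N(z,x):=|f_N(x)|^{\alpha(z)-1}f_N(x)$ (equal to $0$ where $f_N=0$). Then $G_N(\theta,\cdot)=f_N$, $|G_N(j+it,x)|=|f_N(x)|^{q/q_j}$ gives $\|G_N(j+it,\cdot)\|_{\cM^{p_j}_{q_j}}=\|f_N\|_{\cM^p_q}^{q/q_j}\le\|f\|_{\cM^p_q}^{q/q_j}$ and $G_N(j+it,\cdot)\in V\cM^{p_j}_{q_j}$ by (iii); and since $|\log|f_N||\le\log N$ on $\{f_N\neq0\}$ with $\|\chi_{\{f_N\neq0\}}\|_{\cM^{p_j}_{q_j}}<\infty$, one checks $G_N\in\cF(V\cM^{p_0}_{q_0},V\cM^{p_1}_{q_1})$. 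Hence $f_N\in[V\cM^{p_0}_{q_0},V\cM^{p_1}_{q_1}]_\theta\subseteq[V\cM^{p_0}_{q_0},V\cM^{p_1}_{q_1}]^\theta$ with norm $\le\max(\|f\|_{\cM^p_q}^{q/q_0},\|f\|_{\cM^p_q}^{q/q_1})$ uniformly in $N$. Since $f_N\to f$ a.e., I would then pass to the limit by a normal-families argument applied to the primitives $g_N(z):=\int_0^z G_N(w,\cdot)\,dw$ (uniformly bounded in $\cG$): extract a locally uniform limit $g$ in $\cM^{p_0}_{q_0}+\cM^{p_1}_{q_1}$ with $g\in\cG(V\cM^{p_0}_{q_0},V\cM^{p_1}_{q_1})$ and $g'(\theta)=f$, whence $f\in[V\cM^{p_0}_{q_0},V\cM^{p_1}_{q_1}]^\theta$. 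For $V=V_0$ this already yields $\cM^p_q\subseteq[V_0\cM^{p_0}_{q_0},V_0\cM^{p_1}_{q_1}]^\theta$, hence \eqref{eq:thm:171213-2-1}.

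For $[V\cM^{p_0}_{q_0},V\cM^{p_1}_{q_1}]^\theta\subseteq Y_V$: if $f$ is in the left side then $f\in\cM^p_q$ by (i), and by (iv) $\chi_{\{a\le|f|\le b\}}f$ is again in the interpolation space and is bounded, so it suffices to prove $L^\infty\cap[V\cM^{p_0}_{q_0},V\cM^{p_1}_{q_1}]^\theta\subseteq V\cM^p_q$. For $V=V_0$ this is trivial. For $V\in\{V_\infty,V^{(*)}\}$, take $h$ bounded in the left side with witness $g\in\cG(V\cM^{p_0}_{q_0},V\cM^{p_1}_{q_1})$ and use the Poisson representation $h=g'(\theta)=\int_\R(g(it)-g(0))\,d\nu_0(t)+\int_\R(g(1+it)-g(1))\,d\nu_1(t)+(g(1)-g(0))$, where $\nu_0,\nu_1$ are signed measures of zero total mass and exponential decay; since $t\mapsto g(j+it)-g(j)$ is $V\cM^{p_j}_{q_j}$-valued Lipschitz (constant $\|g\|_{\cG}$), the integrals converge absolutely as Bochner integrals in the closed subspaces $V\cM^{p_j}_{q_j}$, so $h=h_0+h_1$ with $h_j\in V\cM^{p_j}_{q_j}$. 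This decomposition alone does not suffice (the $h_j$ need not be in $\cM^p_q$), so I would combine it with the boundedness of $h$: splitting $h=h\chi_{\{|h_0|\le|h_1|\}}+h\chi_{\{|h_0|>|h_1|\}}$ and estimating, on each set, the local mean $(|B|^{-1}\int_B|h|^q)^{1/q}$ by Hölder's inequality against the $L^{q_0}$- or $L^{q_1}$-mean of the dominating $h_j$ times a power of $\|h\|_\infty$ (with exponents dictated by \eqref{eq:prop}), and then letting $r\to\infty$, resp. forming the $V^{(*)}$ tail, using $h_j\in V\cM^{p_j}_{q_j}$, to conclude $h\in V\cM^p_q$.

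The two places where real work is needed are the following. First, the limit $N\to\infty$ above: because $V\cM^{p_j}_{q_j}$ has no Fatou property, one must verify that the normal-families limit $g$ has its boundary differences genuinely in $V\cM^{p_j}_{q_j}$, not merely in $\cM^{p_j}_{q_j}$; I would try to do this from the explicit form $g(z,x)=|f(x)|^{q/q_0}\,\mathrm{sgn}(f(x))\,\bigl(|f(x)|^{cz}-1\bigr)/\bigl(c\log|f(x)|\bigr)$, $c:=q/q_1-q/q_0$, together with the extra bound $|g(j+it,x)-g(j+is,x)|\le|f(x)|^{q/q_j}\min\bigl(|t-s|,\,2/(|c|\,|\log|f(x)||)\bigr)$, possibly after replacing $G_N$ by a finer analytic family; this is the main obstacle for \eqref{eq:thm:171213-2-1}. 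Second, for \eqref{eq:thm:171213-2-2} and \eqref{eq:thm:171213-2-3}, making the last paragraph's Hölder bookkeeping airtight — converting $h=h_0+h_1$, which a priori lives only in $V\cM^{p_0}_{q_0}+V\cM^{p_1}_{q_1}$, into membership in the genuinely smaller space $V\cM^p_q$ using only boundedness of $h$ and $p_j/q_j=p/q$ — is the crux there.
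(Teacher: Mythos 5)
Your skeleton (double inclusion; reduction to bounded level bands via solidity; explicit power functions as analytic families) matches the paper's, and the half you call the ``reverse'' inclusion is essentially recoverable along the lines you indicate: the paper does not pass to a normal-families limit of the $G_N$ at all, but works directly with the explicit primitive $G(z)=(z-\theta)\int_0^1F(\theta+(z-\theta)t)\,dt$ of Proposition \ref{prop:Lemarie-HS} and verifies $G(k+it)-G(k)\in V\cM^{p_k}_{q_k}$ by truncating to $\{N^{-1}\le|f|\le N\}$ and using precisely the $|F|/|\log|f||$ gain you write down (so that the truncation error is $O(1/\log N)$ in $\cM^{p_k}_{q_k}$, while the truncated piece is dominated by $C\chi_{\{N^{-1}\le|f|\le N\}}|f|$, which lies in $V\cM^{p_k}_{q_k}$ by the hypothesis on level bands and the lattice property). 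Had you executed that, this half would be complete; note also that \eqref{eq:thm:171213-2-1} needs none of this --- it follows in two lines by sandwiching $\overline{\cM}{}^{p_j}_{q_j}\subseteq V_0\cM^{p_j}_{q_j}\subseteq\cM^{p_j}_{q_j}$ between Theorems \ref{thm:HS2-2} and \ref{thm:Lemarie}.

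The genuine gap is in the forward inclusion $[V\cM^{p_0}_{q_0},V\cM^{p_1}_{q_1}]^\theta\subseteq Y_V$ for $V\in\{V_\infty,V^{(*)}\}$. The only structural input you extract from $f=g'(\theta)$ is a decomposition $h=h_0+h_1$ with $h_j\in V\cM^{p_j}_{q_j}$, which is nothing more than $[X_0,X_1]^\theta\subseteq X_0+X_1$ and is too weak: with (say) $q_0>q>q_1$, on the set $\{|h_0|>|h_1|\}$ your H\"older step produces $|B(x,r)|^{1/p-1/p_0}\,m(h_0,p_0,q_0;r)$ with $1/p-1/p_0>0$, and the positive power of $|B(x,r)|$ destroys the decay as $r\to\infty$; boundedness of $h$ does not repair this, since $\min(a,c)\le a^{s}c^{1-s}$ requires $s\le1$ whereas you would need $s=q_0/q>1$. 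The paper avoids this by extracting strictly stronger information: by Lemma \ref{lem:160417-3} the difference quotients $H_k(\theta)$ lie in $[V\cM^{p_0}_{q_0},V\cM^{p_1}_{q_1}]_\theta$, which by the already-proved first-method Theorem \ref{thm:171213-1} is contained in the \emph{diagonal} space $V\cM^p_q$; hence $f\in\overline{V\cM^p_q}^{\cM^{p_0}_{q_0}+\cM^{p_1}_{q_1}}$, i.e.\ $f=f_j+g_j+h_j$ with $f_j\in V\cM^p_q$ and only the errors $g_j\to0$ in $\cM^{p_0}_{q_0}$, $h_j\to0$ in $\cM^{p_1}_{q_1}$. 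Lemma \ref{lem:171220-2} then needs H\"older only on these error terms, where the bad factor is replaced by $\|g_j\|_{\cM^{p_0}_{q_0}}^{1-\theta}$ times a \emph{bounded} $\cM^{p_1}_{q_1}$-quantity (and the other error by $\|h_j\|_{\cM^{p_1}_{q_1}}^{q_1/q}$), both of which vanish. Without routing the argument through the first-method result (or some equivalent way of landing in the closure of $V\cM^p_q$ itself), your bookkeeping cannot close.
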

Note that  (\ref{eq:171213-2})
and 
(\ref{eq:thm:171213-2-1}) are immediate once we notice
that $L^\infty \cap {\mathcal M}^p_q \subset V_0{\mathcal M}^p_q$
(see Lemma \ref{lem:171223-1}). In addition to the vanishing Morrey spaces, we discuss the space ${\mathbb M}^p_q$, that is,
the set of all functions $f\in \mathcal{M}^p_q$ for which
$\displaystyle
\lim_{|y| \to 0}f(\cdot+y)=f
$
in the topology of ${\mathcal M}^p_q$.
These spaces were first introduced in \cite{Zo}.
We show that ${\mathbb M}^p_q$ is equal to 
the diamond space $\overset{\diamond}{\mathcal M}{}^p_q$, namely, the closure 
in $\cM^p_q$ of all functions 
$f$ 
such that
$\partial^\alpha f\in {\mathcal M}^p_q$
for all 
$\alpha \in {\mathbb N}_0{}^n$ and $j \in {\mathbb N}$ (see Theorem \ref{thm:171218-1} below). 
As a consequence, the complex interpolation of 
${\mathbb M}^p_q$ follows from the result in \cite{HNS}. Remark that the authors in \cite{AS} also introduced the space $V^{(*)}_{0,\infty}\cM^p_q$ which is defined by 
\[
V^{(*)}_{0,\infty}\cM^p_q:=
V_0\cM^p_q \cap V_\infty\cM^p_q
\cap V^{(*)}\cM^p_q.
\]
Since this space is equal to $\overset{\circ}{\cM}{}^{p}_{q}$ and \eqref{eq:YYZ} holds, we do not consider the complex interpolation of this space. 

The rest of this article is organized as follows. In Section 2 we recall the definition of the complex interpolation method and some previous results about complex interpolation of Morrey spaces and their subspaces. 
We give the proof of Theorems \ref{thm:171213-1} and \ref{thm:171213-2}  in Sections 3 and 4, respectively. 
In Section 5, we show that $\mathbb{M}^p_q$ is equal to $\overset{\diamond}{\mathcal{M}}{}^p_q$. 
Finally, we compare each subspace in Theorems 
\ref{thm:171213-1} and \ref{thm:171213-2} and investigate their relation by giving several examples in Section 6. 

\section{Preliminaries}
\subsection{The complex interpolation method}
Let us recall the definition of complex interpolation method, introduced in \cite{Calderon3}. We follow the presentation in the book \cite{Be}.   Throughout this paper, we define the set $S:=\{z\in \mathbb{C}: 0<{\rm Re}(z)<1\}$ 
and $\overline{S}$ be its closure. First, we recall the following definitions. 
\begin{definition}[Compatible couple]
A couple of Banach spaces $(X_0, X_1)$ is called compatible if there exists a Hausdorff topological vector space $Z$ for which $X_0$ and $X_1$ are continuously embedded into $Z$.
\end{definition}
\begin{definition}[The first complex interpolation functor]
Let $(X_0, X_1)$ be a compatible couple of Banach spaces. The space $\mathcal{F}(X_0, X_1)$ is defined to be the set of all  bounded continuous function $F:\overline{S} \to X_0+X_1$ for which 
\begin{enumerate}
\item 
$F$ is holomorphic in $S$;
\item 
For each $k=0,1$,
the function $t\in \mathbb{R} \mapsto F(k+it) \in X_k$ is bounded and continuous. 
\end{enumerate}
For every $F\in \mathcal{F}(X_0,X_1)$, we define the norm
\[
\|F\|_{\mathcal{F}(X_0, X_1)}
:=
\max_{k=0,1} \sup_{t\in \mathbb{R}}
\|F(k+it)\|_{X_k}.
\]
\end{definition}
\begin{definition}[The first complex interpolation space]
Let $\theta \in (0,1)$. The first complex interpolation of a compatible couple of Banach spaces $(X_0, X_1)$ is defined by 
\[
[X_0, X_1]_\theta 
:=\{F(\theta): F\in \mathcal{F}(X_0, X_1)\}.
\] 
The norm on $[X_0, X_1]_\theta$ is defined by
\[
\|f\|_{[X_0, X_1]_\theta 
}
:=\inf \{\|F\|_{\mathcal{F}(X_0,X_1)}:
f=F(\theta), F\in \mathcal{F}(X_0,X_1) \}. 
\]
\end{definition}
\noindent
We shall use the following density result. 
\begin{lemma}\label{lem:dense}{\rm \cite{Calderon3}}
Let $\theta \in (0,1)$ and given a compatible couple of Banach spaces $(X_0, X_1)$.
Then the space $X_0\cap X_1$ is dense in $[X_0, X_1]_\theta$.
\end{lemma}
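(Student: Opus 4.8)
Lemma~\ref{lem:dense} is Calder\'on's classical density theorem, and I would prove it following the argument of \cite{Calderon3} (see also \cite{Be}). The first point is purely formal: by the definition of the norm on $[X_0,X_1]_\theta$, the evaluation map ${\rm ev}_\theta\colon F\mapsto F(\theta)$ is a metric surjection of $\mathcal{F}(X_0,X_1)$ onto $[X_0,X_1]_\theta$. So, fixing $a\in[X_0,X_1]_\theta$ together with a representing function $F\in\mathcal{F}(X_0,X_1)$ for which $F(\theta)=a$, it suffices to produce, for each $\varepsilon>0$, some $g\in X_0\cap X_1$ and some $H\in\mathcal{F}(X_0,X_1)$ with $H(\theta)=a-g$ and $\|H\|_{\mathcal{F}(X_0,X_1)}<\varepsilon$; indeed, then $\|a-g\|_{[X_0,X_1]_\theta}\le\|H\|_{\mathcal{F}(X_0,X_1)}<\varepsilon$.

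Next I would regularize $F$ in a way that does not move the value at $\theta$. Put $F_\delta(z):=e^{\delta(z-\theta)^2}F(z)$ with $\delta>0$ small. Since ${\rm Re}\big((z-\theta)^2\big)=({\rm Re}\,z-\theta)^2-({\rm Im}\,z)^2$ is bounded above on $\overline{S}$ and tends to $-\infty$ as $|{\rm Im}\,z|\to\infty$, the function $F_\delta$ again lies in $\mathcal{F}(X_0,X_1)$, it still satisfies $F_\delta(\theta)=F(\theta)=a$, and its boundary functions obey $\|F_\delta(k+it)\|_{X_k}\le C\,e^{-\delta t^2}$, so that $t\mapsto F_\delta(k+it)\in X_k$ now decays rapidly (in particular it lies in $L^1(\mathbb{R};X_k)$). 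Replacing $F$ by $F_\delta$, I may assume $F$ itself has this decay.

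The remaining step --- constructing the approximating pair $(g,H)$ with $g\in X_0\cap X_1$ --- is the heart of the matter and the part I expect to cost the most work. The plan is to manufacture $g$ and $H$ out of $F$: represent $F$ through the Poisson kernel of the strip in terms of its two (now integrable) boundary functions, truncate and mollify these to compactly supported smooth data, discretize the two integrals into finite Riemann sums, and reassemble the pieces --- after rescaling in the $z$-variable so as to balance the contributions of the two boundary lines, exactly as in the model computation that realizes $[L^{p_0},L^{p_1}]_\theta=L^p$ by truncating and rescaling the complex-power representative $|f|^{\alpha(z)}{\rm sgn}(f)$ --- in such a way that each elementary building block already lies in $X_0\cap X_1$. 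The delicate point, and the one to watch, is precisely arranging the discretization so that the resulting $g$ lands in $X_0\cap X_1$ rather than merely in $X_0+X_1$; once that is done, the rapid decay secured in the previous step forces the truncation and discretization errors to tend to $0$ in the $\mathcal{F}(X_0,X_1)$-norm, which gives $\|a-g\|_{[X_0,X_1]_\theta}<\varepsilon$ and hence the density of $X_0\cap X_1$ in $[X_0,X_1]_\theta$.
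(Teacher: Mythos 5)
The paper does not prove this lemma at all --- it simply cites Calder\'on \cite{Calderon3} --- so the only benchmark is the classical argument (Calder\'on \S 9.2; Bergh--L\"ofstr\"om \cite{Be}, Lemma 4.2.3). Your first two steps match that argument and are fine: reducing to the approximation of a representing $F\in\mathcal{F}(X_0,X_1)$ in the $\mathcal{F}$-norm, and multiplying by $e^{\delta(z-\theta)^2}$ to gain decay on the boundary lines without changing $F(\theta)$.

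The third step, however, contains a genuine gap, and it is exactly at the point you yourself flag as ``delicate.'' A Riemann-sum discretization of the Poisson representation
$F(z)=\int P_0(z,t)F(it)\,dt+\int P_1(z,t)F(1+it)\,dt$
produces a finite linear combination of the values $F(it)\in X_0$ plus a finite linear combination of the values $F(1+it)\in X_1$; that is an element of $X_0+X_1$, not of $X_0\cap X_1$, and no rescaling in $z$ fixes this, because the two boundary families of values live in different spaces from the outset. The analogy with $[L^{p_0},L^{p_1}]_\theta=L^p$ is misleading here: in that model case every truncated power of $f$ happens to lie in $L^{p_0}\cap L^{p_1}$ for free, which is precisely what fails for an abstract couple. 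The classical proof avoids the Poisson kernel entirely. After the Gaussian regularization one takes Fourier coefficients in the imaginary direction, $c_\mu^{(j)}=\int_{\mathbb R}G(j+it)e^{-i\mu t}\,dt$ for $j=0,1$, and the key identity --- obtained by shifting the contour of integration across the strip via Cauchy's theorem --- is that $c_\mu^{(0)}$ and $e^{\mu}c_\mu^{(1)}$ are the \emph{same} element of $X_0+X_1$; since one expression exhibits it as an $X_0$-valued integral and the other as an $X_1$-valued integral, this common element lies in $X_0\cap X_1$. These coefficients are then reassembled by Fej\'er means into approximants of the form $e^{\delta z^2}\sum_k e^{\lambda_k z}a_k$ with $a_k\in X_0\cap X_1$, which converge to $G$ in $\mathcal{F}(X_0,X_1)$. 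That contour-shifting identification of the two boundary integrals is the missing idea; without it your construction never leaves $X_0+X_1$.
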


We now consider the second complex interpolation method. 
Let $X$ be a Banach space and recall that
the space ${\rm Lip}({\mathbb R},X)$ is defined to be the set of all $X$-valued functions $f$ on $\mathbb{R}$ for which
\[
\|f\|_{{\rm Lip}({\mathbb R},X)}
:=
\sup_{-\infty<t<s<\infty}
\frac{\|f(s)-f(r)\|_X}{|s-t|}
\]
is finite. The definition of the second complex interpolation space is given as follows.
\begin{definition}[The second complex interpolation functor]
Let $(X_0, X_1)$ be a compatible couple of Banach spaces. 
The space $\mathcal{G}(X_0, X_1)$ is the set of all continuous functions $G:\overline{S} \to X_0+X_1$ for which 
\begin{enumerate}
\item 
$G|_{S}$ is holomorphic;
\item 
$\displaystyle \sup\limits_{z\in \overline{S}} \frac{\left\|G(z)\right\|_{X_0+X_1}}{1+|z|} <\infty$;
\item 
For each $k=0,1$, the function $t\in \mathbb{R} \mapsto G(k+it) \in X_k$ belongs to ${\rm Lip}({\mathbb R}, X_k)$.
\end{enumerate}
For every $G\in \mathcal{G}(X_0, X_1)$, we define 
\[
\|G\|_{\mathcal{G}(X_0, X_1)}
:=
\max_{k=0,1}
\sup_{t\in \mathbb{R}}
\|G(k+i\cdot)\|_{{\rm Lip}(\R, X_k)}.
\]
\end{definition}
\begin{definition}[The second complex interpolation space]
Let $\theta \in (0,1)$ and $(X_0, X_1)$ be a compatible couple of Banach spaces.  The second complex interpolation space $[X_0, X_1]^\theta$
is defined by 
\[
[X_0, X_1]^\theta
:=
\{G'(\theta): G\in \mathcal{G}(X_0, X_1)\}. 
\]
The space $[X_0, X_1]^\theta$ is equipped with the norm
\[
\|f\|_{[X_0, X_1]^\theta}
:=
\inf\{\|G\|_{\mathcal{G}(X_0, X_1)}:
f=G'(\theta), G\in \mathcal{G}(X_0, X_1)
\}. 
\]
\end{definition}
We shall utilize the following relation between the first and second complex interpolation method. 
\begin{lemma}\label{lem:160417-3}
	{\rm \cite[Lemma 2.4]{HS2}}
Let  $(X_0, X_1)$ be a compatible couple of Banach spaces and
let $G\in \cG(X_0,X_1)$ be fixed.  
For every $z\in \overline{S}$,
and $k\in \N$, set
\begin{align}\label{eq:160417-3}
H_k(z):=\frac{G(z+2^{-k}i) -G(z)}{2^{-k}i}.
\end{align}
Then, $H_k(\theta)\in [X_0,X_1]_\theta$, for every $\theta \in (0,1)$.
\end{lemma}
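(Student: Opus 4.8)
The plan is to verify that each $H_k$ belongs to the class $\cF(X_0,X_1)$; once this is done, the conclusion $H_k(\theta)\in[X_0,X_1]_\theta$ is immediate from the definition of the first complex interpolation space. The first observation I would make is that the translation $z\mapsto z+2^{-k}i$ maps $S$ into $S$ and $\overline{S}$ into $\overline{S}$, since it does not change the real part. Hence $z\mapsto G(z+2^{-k}i)$ is again continuous on $\overline{S}$ with values in $X_0+X_1$ and holomorphic in $S$, and the same is therefore true of the linear combination $H_k$.

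Next I would handle the boundary traces. Fix $j\in\{0,1\}$. Then
\[
H_k(j+it)=\frac{G(j+i(t+2^{-k}))-G(j+it)}{2^{-k}i},
\]
and since $t\mapsto G(j+it)$ lies in ${\rm Lip}(\R,X_j)$ by the definition of $\cG(X_0,X_1)$, the map $t\mapsto H_k(j+it)$ is continuous from $\R$ into $X_j$ and satisfies
\[
\sup_{t\in\R}\|H_k(j+it)\|_{X_j}\le\|G(j+i\cdot)\|_{{\rm Lip}(\R,X_j)}\le\|G\|_{\cG(X_0,X_1)}.
\]
Thus the two boundary traces of $H_k$ are bounded and continuous with values in $X_0$ and $X_1$, respectively.

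The only point that is not completely formal — and the step I expect to be the main obstacle — is to show that $H_k$ is bounded on all of $\overline{S}$ as an $(X_0+X_1)$-valued function, because a priori condition~(2) in the definition of $\cG$ gives only linear growth in $|z|$ for $G$, hence for $H_k$. To upgrade this to genuine boundedness I would argue by Phragm\'en--Lindel\"of: for every $\varphi$ in the unit ball of $(X_0+X_1)^*$, the scalar function $z\mapsto\varphi(H_k(z))$ is continuous on $\overline{S}$, holomorphic in $S$, grows at most linearly in $|z|$ (in particular far below the critical exponential rate $e^{\pi|{\rm Im}\,z|}$ for the strip $S$), and on each line ${\rm Re}\,z=j$ it is bounded by $\|H_k(j+i\cdot)\|_{L^\infty(\R,X_j)}\le\|G\|_{\cG(X_0,X_1)}$, using that the embedding $X_j\hookrightarrow X_0+X_1$ has norm at most $1$. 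The Phragm\'en--Lindel\"of principle for the strip $S$ then gives $|\varphi(H_k(z))|\le\|G\|_{\cG(X_0,X_1)}$ for all $z\in\overline{S}$, and taking the supremum over $\varphi$ (via the Hahn--Banach duality $\|y\|_{X_0+X_1}=\sup_{\|\varphi\|\le1}|\varphi(y)|$) yields
\[
\sup_{z\in\overline{S}}\|H_k(z)\|_{X_0+X_1}\le\|G\|_{\cG(X_0,X_1)}.
\]

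Combining the three parts, $H_k\in\cF(X_0,X_1)$ with $\|H_k\|_{\cF(X_0,X_1)}\le\|G\|_{\cG(X_0,X_1)}$, and therefore $H_k(\theta)\in[X_0,X_1]_\theta$ for every $\theta\in(0,1)$. Aside from the Phragm\'en--Lindel\"of step, the whole argument is a direct transcription of the defining properties of $\cG(X_0,X_1)$, so I do not foresee any further difficulty.
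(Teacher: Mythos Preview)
The paper does not actually prove this lemma; it is quoted verbatim from \cite[Lemma 2.4]{HS2} and stated without proof, so there is no in-paper argument to compare your proposal against. Your proof is correct and is the standard one: translation preserves the strip, the Lipschitz condition on the boundary gives both the $X_j$-continuity and the uniform $X_j$-bound of $t\mapsto H_k(j+it)$, and the Phragm\'en--Lindel\"of step (applied scalarly via Hahn--Banach) upgrades the linear growth of $H_k$ inherited from condition~(2) on $G$ to genuine boundedness on $\overline{S}$, so that $H_k\in\cF(X_0,X_1)$.
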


\subsection{Previous results on complex interpolation of Morrey spaces}

First, let us recall the results on the second complex interpolation method of Morrey spaces. 
\begin{proposition}\label{prop:Lemarie-HS}{\rm \cite{HS, L14}}
Keep the same assumption as in Theorem 
\ref{thm:171213-2}. 
Let $f\in \mathcal{M}^p_q$. 
Define the functions $F$ and $G$ on $\overline{S}$ by
\begin{align}\label{eq:171220-20}
F(z):={\rm sgn}(f) |f|^{p\left(\frac{1-z}{p_0}+\frac{z}{p_1}\right)}, \ 
(z\in \overline{S})
\end{align}
and 
\begin{align}\label{eq:171220-210}
G(z)
:=
(z-\theta)
\int_0^1 F(\theta+(z-\theta)t)\ dt,\  (z\in \overline{S}).
\end{align}
Then, for every $z\in \overline{S}$, we have
\begin{align}\label{eq:171220-27}
|G(z)|\le 
(1+|z|)\left(|f|^{\frac{p}{p_0}}+|f|^{\frac{p}{p_1}}\right).
\end{align}
Moreover, 
$G\in \mathcal{G}(\mathcal{M}^{p_0}_{q_0}, \mathcal{M}^{p_1}_{q_1})$. 
\end{proposition}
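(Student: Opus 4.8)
The plan is to verify, one clause at a time, the conditions defining $\cG(\cM^{p_0}_{q_0},\cM^{p_1}_{q_1})$, using the pointwise bound \eqref{eq:171220-27} as the workhorse; this is essentially the argument of \cite{HS, L14}. First I would record two elementary facts about Morrey spaces: (i) if $|g|\le|h|$ a.e.\ and $h\in\cM^P_Q$, then $g\in\cM^P_Q$ with $\|g\|_{\cM^P_Q}\le\|h\|_{\cM^P_Q}$; and (ii) $\big\|\,|f|^{s}\,\big\|_{\cM^{P/s}_{Q/s}}=\|f\|_{\cM^P_Q}^{s}$ for every $s>0$. Since \eqref{eq:prop} and \eqref{eq:pq} force $p/q=p_0/q_0=p_1/q_1$, in particular $qp_j/p=q_j$, applying (ii) with $P=p$, $Q=q$, $s=p/p_j$ gives $\big\|\,|f|^{p/p_j}\,\big\|_{\cM^{p_j}_{q_j}}=\|f\|_{\cM^p_q}^{p/p_j}<\infty$ for $j=0,1$, so $|f|^{p/p_0}\in\cM^{p_0}_{q_0}$ and $|f|^{p/p_1}\in\cM^{p_1}_{q_1}$.

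Next I would set $\varphi(z):=p\big(\tfrac{1-z}{p_0}+\tfrac{z}{p_1}\big)$, so $|F(z)|=|f|^{{\rm Re}\,\varphi(z)}$ with $\varphi(0)=p/p_0$, $\varphi(1)=p/p_1$, $\varphi(\theta)=1$. For $z\in\overline{S}$ and $t\in[0,1]$, convexity of $\overline{S}$ gives $\theta+(z-\theta)t\in\overline{S}$, hence ${\rm Re}\,\varphi(\theta+(z-\theta)t)$ is a convex combination of $p/p_0$ and $p/p_1$, and since $\alpha\mapsto a^{\alpha}$ is monotone for $a\ge0$, $|f|^{{\rm Re}\,\varphi(\theta+(z-\theta)t)}\le|f|^{p/p_0}+|f|^{p/p_1}$; integrating in $t$ and using $|z-\theta|\le1+|z|$ proves \eqref{eq:171220-27}. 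On $\{f\neq0\}$ (off which $G(z)$ vanishes) I would then split
\[
G(z)=G(z)\,\frac{|f|^{p/p_0}}{|f|^{p/p_0}+|f|^{p/p_1}}+G(z)\,\frac{|f|^{p/p_1}}{|f|^{p/p_0}+|f|^{p/p_1}},
\]
so that by \eqref{eq:171220-27} the two summands are at most $(1+|z|)|f|^{p/p_0}$ and $(1+|z|)|f|^{p/p_1}$ in modulus; by (i) and the previous paragraph they lie in $\cM^{p_0}_{q_0}$ and $\cM^{p_1}_{q_1}$, which gives $G(z)\in\cM^{p_0}_{q_0}+\cM^{p_1}_{q_1}$ and the growth bound $\|G(z)\|_{\cM^{p_0}_{q_0}+\cM^{p_1}_{q_1}}\le(1+|z|)\big(\|f\|_{\cM^p_q}^{p/p_0}+\|f\|_{\cM^p_q}^{p/p_1}\big)$. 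Continuity of $G$ on $\overline{S}$ comes out the same way: for fixed $y$, $u\mapsto F(u)(y)={\rm sgn}(f(y))\,e^{\varphi(u)\ln|f(y)|}$ is entire, so $G(z_1)(y)-G(z_2)(y)=\int_{[z_2,z_1]}F(u)(y)\,du$ has modulus $\le|z_1-z_2|(|f(y)|^{p/p_0}+|f(y)|^{p/p_1})$, and the same splitting upgrades this to $\|G(z_1)-G(z_2)\|_{\cM^{p_0}_{q_0}+\cM^{p_1}_{q_1}}\le|z_1-z_2|\big(\|f\|_{\cM^p_q}^{p/p_0}+\|f\|_{\cM^p_q}^{p/p_1}\big)$, i.e.\ $G$ is even Lipschitz into the sum space.

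For holomorphy of $G$ on $S$ I would argue by scalarization. The sum $\cM^{p_0}_{q_0}+\cM^{p_1}_{q_1}$ embeds continuously into $L^1_{\rm loc}(\mathbb{R}^n)\hookrightarrow\cD'(\mathbb{R}^n)$ (each $q_j\ge1$), and $G$ is locally bounded into it; for $\psi\in C^\infty_c(\mathbb{R}^n)$, \eqref{eq:171220-27} furnishes an $L^1({\rm supp}\,\psi)$-majorant for $G(\cdot)(y)$ that is uniform on compact subsets of $S$, so Fubini and Morera applied to the entire scalar functions $z\mapsto G(z)(y)$ show $z\mapsto\int_{\mathbb{R}^n}G(z)(y)\psi(y)\,dy$ is holomorphic on $S$; since these functionals lie in $(\cM^{p_0}_{q_0}+\cM^{p_1}_{q_1})^{*}$ and separate points of $L^1_{\rm loc}(\mathbb{R}^n)$, the standard criterion ``locally bounded $+$ scalarly holomorphic $\Rightarrow$ holomorphic'' finishes this step.

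Finally, the boundary condition rests on the observation that ${\rm Re}\,\varphi$ is \emph{constant} on each line ${\rm Re}\,z=k$: indeed ${\rm Re}\,\varphi(k+i\tau)=p\big(\tfrac{1-k}{p_0}+\tfrac{k}{p_1}\big)$, which is $p/p_0$ for $k=0$ and $p/p_1$ for $k=1$. Hence $|F(k+i\tau)(y)|=|f(y)|^{p/p_k}$ for all $\tau\in\mathbb{R}$, so for $s<t$
\[
|G(k+it)(y)-G(k+is)(y)|=\Big|i\int_s^t F(k+i\tau)(y)\,d\tau\Big|\le(t-s)\,|f(y)|^{p/p_k},
\]
and (i) together with $\big\|\,|f|^{p/p_k}\,\big\|_{\cM^{p_k}_{q_k}}=\|f\|_{\cM^p_q}^{p/p_k}$ gives $\|G(k+it)-G(k+is)\|_{\cM^{p_k}_{q_k}}\le(t-s)\|f\|_{\cM^p_q}^{p/p_k}$; thus $t\mapsto G(k+it)$ is Lipschitz into $\cM^{p_k}_{q_k}$ with constant $\|f\|_{\cM^p_q}^{p/p_k}$ (the boundary clause of the definition of $\cG$ being understood, as in Calderón's construction of the upper method, as a condition on the increments $G(k+it)-G(k+is)$). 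Collecting everything, $G\in\cG(\cM^{p_0}_{q_0},\cM^{p_1}_{q_1})$ with $\|G\|_{\cG}\le\max_{k=0,1}\|f\|_{\cM^p_q}^{p/p_k}$. The one step that is not bookkeeping with \eqref{eq:171220-27} is the holomorphy of $G$ into the sum space: Morrey norms are not absolutely continuous, so dominated convergence is unavailable, and it is precisely the descent to scalar holomorphy above where I expect the real work to lie.
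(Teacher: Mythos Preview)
The paper does not supply its own proof of this proposition; it is quoted verbatim from \cite{HS, L14} as a known result and used later as a black box. So there is no in-paper argument to compare against.

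Your proof is correct and is, in outline, exactly the argument of \cite{HS, L14}: the pointwise domination \eqref{eq:171220-27} via convexity of the exponent, the splitting by the weights $|f|^{p/p_j}/(|f|^{p/p_0}+|f|^{p/p_1})$ to land in the sum space, the identity $G(z_1)-G(z_2)=\int_{[z_2,z_1]}F$ for Lipschitz continuity, scalarization plus Morera for holomorphy, and the observation that ${\rm Re}\,\varphi$ is constant on each boundary line for the Lipschitz condition into $\cM^{p_k}_{q_k}$. Your use of the criterion ``continuous, locally bounded, and weakly holomorphic against a separating family in the dual $\Rightarrow$ holomorphic'' is legitimate here because you have already shown $G$ is (even Lipschitz) continuous into the sum space, so the vector-valued Morera theorem applies once $\int_\gamma\langle G(z),\psi\rangle\,dz=0$ for all triangles $\gamma$ and all $\psi\in C^\infty_{\rm c}$.

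The one point you rightly flag deserves emphasis: the paper's Definition of $\cG$ literally writes ``$t\mapsto G(k+it)\in X_k$'', but the norm $\|G\|_{\cG}$ and the entire theory (as in \cite{Be,Calderon3}) depend only on the increments $G(k+it)-G(k+is)$, and indeed $G(0)$ need \emph{not} lie in $\cM^{p_0}_{q_0}$ in general (e.g.\ for $f(x)=|x|^{-n/p}$ and $q_0>p$ one checks $G(0)\notin L^{q_0}_{\rm loc}$). Your proof handles this correctly by bounding only the increments on each boundary line; the references \cite{HS, L14} adopt the same convention.
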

\begin{theorem}
\label{thm:Lemarie}{\rm \cite{L14}}
Keep the same assumption as in Theorem 
\ref{thm:171213-2}. Then $$[\mathcal{M}^{p_0}_{q_0}, \mathcal{M}^{p_1}_{q_1}]^\theta =\mathcal{M}^p_q.$$
\end{theorem}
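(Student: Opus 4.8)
The plan is to establish the two inclusions separately. The inclusion $\mathcal{M}^p_q\subseteq[\mathcal{M}^{p_0}_{q_0},\mathcal{M}^{p_1}_{q_1}]^\theta$ will be read off directly from Proposition \ref{prop:Lemarie-HS}, while the reverse inclusion $[\mathcal{M}^{p_0}_{q_0},\mathcal{M}^{p_1}_{q_1}]^\theta\subseteq\mathcal{M}^p_q$ will be obtained by combining Lemma \ref{lem:160417-3}, the embedding \eqref{eq:CPP}, and the lower semicontinuity of the Morrey norm under almost everywhere convergence.

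For the first inclusion, I would fix $f\in\mathcal{M}^p_q$ and take $F,G$ as in Proposition \ref{prop:Lemarie-HS}. A change of variables $u=\theta+(z-\theta)t$ in \eqref{eq:171220-210} rewrites $G(z)=\int_\theta^z F(u)\,du$, whence $G'(\theta)=F(\theta)$; and by \eqref{eq:171220-20} together with \eqref{eq:pq} the exponent at $z=\theta$ equals $1$, so $F(\theta)=\mathrm{sgn}(f)\,|f|=f$. Since Proposition \ref{prop:Lemarie-HS} already provides $G\in\mathcal{G}(\mathcal{M}^{p_0}_{q_0},\mathcal{M}^{p_1}_{q_1})$, this yields $f=G'(\theta)\in[\mathcal{M}^{p_0}_{q_0},\mathcal{M}^{p_1}_{q_1}]^\theta$, and \eqref{eq:171220-27} together with the scaling identity $\bigl\||g|^{p/p_i}\bigr\|_{\mathcal{M}^{p_i}_{q_i}}=\|g\|_{\mathcal{M}^p_q}^{p/p_i}$ for $i=0,1$ (valid under \eqref{eq:prop}) gives the norm bound $\|f\|_{[\mathcal{M}^{p_0}_{q_0},\mathcal{M}^{p_1}_{q_1}]^\theta}\lesssim\|f\|_{\mathcal{M}^p_q}$.

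For the reverse inclusion, I would start from $f=G'(\theta)$ with $G\in\mathcal{G}(\mathcal{M}^{p_0}_{q_0},\mathcal{M}^{p_1}_{q_1})$ and form the difference quotients $H_k$ of \eqref{eq:160417-3}. Lemma \ref{lem:160417-3} places $H_k(\theta)$ in $[\mathcal{M}^{p_0}_{q_0},\mathcal{M}^{p_1}_{q_1}]_\theta$, and a closer look at its proof supplies the uniform estimate $\|H_k(\theta)\|_{[\mathcal{M}^{p_0}_{q_0},\mathcal{M}^{p_1}_{q_1}]_\theta}\le\|G\|_{\mathcal{G}}$: on each line $\mathrm{Re}\,z=j$ the values of $H_k$ are difference quotients of the Lipschitz map $t\mapsto G(j+it)$ and are thus bounded by $\|G\|_{\mathcal{G}}$, while condition (2) in the definition of $\mathcal{G}(X_0,X_1)$ forces $H_k$ to grow at most linearly on $\overline S$, so a Phragm\'en--Lindel\"of argument in the strip transfers the bound to the interior. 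Then \eqref{eq:CPP} gives $\|H_k(\theta)\|_{\mathcal{M}^p_q}\lesssim\|G\|_{\mathcal{G}}$ uniformly in $k$. Since $G$ is holomorphic at $\theta$ as a map into $\mathcal{M}^{p_0}_{q_0}+\mathcal{M}^{p_1}_{q_1}$, we have $H_k(\theta)\to G'(\theta)=f$ in $\mathcal{M}^{p_0}_{q_0}+\mathcal{M}^{p_1}_{q_1}$, hence in $L^1_{\rm loc}(\mathbb{R}^n)$; extracting an almost everywhere convergent subsequence and applying Fatou's lemma to each ball-average occurring in $\|\cdot\|_{\mathcal{M}^p_q}$ shows $f\in\mathcal{M}^p_q$ with $\|f\|_{\mathcal{M}^p_q}\le\liminf_k\|H_k(\theta)\|_{\mathcal{M}^p_q}\lesssim\|G\|_{\mathcal{G}}$. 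Taking the infimum over admissible $G$ completes both the inclusion and the norm comparison.

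The substantive input here is entirely contained in Proposition \ref{prop:Lemarie-HS}, namely the verification that the explicit function $G$ belongs to $\mathcal{G}(\mathcal{M}^{p_0}_{q_0},\mathcal{M}^{p_1}_{q_1})$ --- in particular that $t\mapsto G(j+it)$ is Lipschitz into $\mathcal{M}^{p_j}_{q_j}$ --- which is exactly the step that uses the proportionality \eqref{eq:prop}, and which has already been carried out in \cite{HS,L14}. Granting that, the only points requiring genuine care in the present argument are the uniform bound on $\|H_k(\theta)\|_{[\mathcal{M}^{p_0}_{q_0},\mathcal{M}^{p_1}_{q_1}]_\theta}$ and the Fatou property of the Morrey norm; both are standard.
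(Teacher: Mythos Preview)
The paper does not supply its own proof of Theorem \ref{thm:Lemarie}; it is quoted from \cite{L14} (with the underlying construction recorded as Proposition \ref{prop:Lemarie-HS}). Your argument is correct and is essentially the standard one from \cite{L14,HS}: the inclusion $\mathcal{M}^p_q\subseteq[\mathcal{M}^{p_0}_{q_0},\mathcal{M}^{p_1}_{q_1}]^\theta$ is exactly what Proposition \ref{prop:Lemarie-HS} is set up to deliver, and the reverse inclusion via the difference quotients $H_k$, the embedding \eqref{eq:CPP}, and the Fatou property of $\|\cdot\|_{\mathcal{M}^p_q}$ is the route taken in those references as well.

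One small remark on the uniform bound $\|H_k(\theta)\|_{[\mathcal{M}^{p_0}_{q_0},\mathcal{M}^{p_1}_{q_1}]_\theta}\le\|G\|_{\mathcal{G}}$: the Phragm\'en--Lindel\"of step you invoke does control $\|H_k(z)\|_{X_0+X_1}$, but membership in $\mathcal{F}(X_0,X_1)$ also asks for continuity of $t\mapsto H_k(j+it)$ into $X_j$, which does not follow automatically from $G(j+i\cdot)\in\mathrm{Lip}(\mathbb{R},X_j)$ alone. This is precisely the technical point absorbed into the proof of Lemma \ref{lem:160417-3} in \cite{HS2} (via the usual mollification $e^{\varepsilon(z^2-\theta^2)}$ or an equivalent device), so your appeal to ``a closer look at its proof'' is legitimate; just be aware that the bound is not a one-line consequence of the boundary estimates.
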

The description of complex interpolation of some closed subspaces of Morrey spaces is given as follows. 
\begin{theorem}\label{thm:HS2}{\rm \cite{HS2}}
Keep the same assumption as in Theorem 
\ref{thm:171213-1}. 
Then 
\[
[\overline{\mathcal{M}}{}^{p_0}_{q_0}, 
\overline{\mathcal{M}}{}^{p_1}_{q_1}]_\theta
=
[\mathcal{M}^{p_0}_{q_0}, \mathcal{M}^{p_1}_{q_1}]_\theta
=
\{f\in \mathcal{M}^p_q: \lim_{N\to \infty}
\|f-\chi_{\{1/N\le |f|\le N\}}f\|_{\mathcal{M}^p_q}=0\}.
\]
\end{theorem}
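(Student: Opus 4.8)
The plan is to prove the two asserted equalities in turn, reducing everything to the known description of the full Morrey interpolation space. Write $\mathcal{E}$ for the set on the right-hand side, namely $\mathcal{E}:=\{f\in\mathcal{M}^p_q:\lim_{N\to\infty}\|f-\chi_{\{1/N\le|f|\le N\}}f\|_{\mathcal{M}^p_q}=0\}$, and recall from \cite{LYY} that, under the propriety condition \eqref{eq:prop}, $[\mathcal{M}^{p_0}_{q_0},\mathcal{M}^{p_1}_{q_1}]_\theta=\overline{\mathcal{M}^{p_0}_{q_0}\cap\mathcal{M}^{p_1}_{q_1}}^{\mathcal{M}^p_q}$. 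It therefore suffices to establish (i) $\mathcal{E}=\overline{\mathcal{M}^{p_0}_{q_0}\cap\mathcal{M}^{p_1}_{q_1}}^{\mathcal{M}^p_q}$ and (ii) $[\overline{\mathcal{M}}{}^{p_0}_{q_0},\overline{\mathcal{M}}{}^{p_1}_{q_1}]_\theta=[\mathcal{M}^{p_0}_{q_0},\mathcal{M}^{p_1}_{q_1}]_\theta$. Throughout I may assume $q_0<q_1$, so that $q_0<q<q_1$ by \eqref{eq:pq}; the opposite case is symmetric. The inclusion ``$\subseteq$'' in (ii) is immediate from the monotonicity of the functor $\mathcal{F}$ applied to the embeddings $\overline{\mathcal{M}}{}^{p_i}_{q_i}\subseteq\mathcal{M}^{p_i}_{q_i}$.

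The workhorse throughout is a pair of truncation estimates that become available precisely because of \eqref{eq:prop}: splitting a function $h$ according to the size of $|h|$ and using that $p_i/q_i=p/q$ forces the ball factor $|B|^{1/p_i}$ to scale as a fixed power of $|B|^{1/p}$, one obtains for $N\ge1$ the bounds $\|\chi_{\{|h|>N\}}h\|_{\mathcal{M}^p_q}\le N^{(q-q_1)/q}\|h\|_{\mathcal{M}^{p_1}_{q_1}}^{q_1/q}$ and $\|\chi_{\{0<|h|<1/N\}}h\|_{\mathcal{M}^p_q}\le N^{-(q-q_0)/q}\|h\|_{\mathcal{M}^{p_0}_{q_0}}^{q_0/q}$, together with the companion inequalities expressing the $\mathcal{M}^{p_0}_{q_0}$- and $\mathcal{M}^{p_1}_{q_1}$-norms of a truncated function through its $\mathcal{M}^p_q$-norm. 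These yield both inclusions of (i) on the dense level: if $g\in\mathcal{M}^{p_0}_{q_0}\cap\mathcal{M}^{p_1}_{q_1}$ then, since $q_0<q<q_1$, the two displayed tails tend to $0$ and so $g\in\mathcal{E}$; conversely, if $f\in\mathcal{E}$ then $g_N:=\chi_{\{1/N\le|f|\le N\}}f$ satisfies $1/N\le|g_N|\le N$, whence the companion inequalities give $g_N\in\mathcal{M}^{p_0}_{q_0}\cap\mathcal{M}^{p_1}_{q_1}$ while $g_N\to f$ in $\mathcal{M}^p_q$, so $f$ lies in the closure.

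To upgrade $\mathcal{M}^{p_0}_{q_0}\cap\mathcal{M}^{p_1}_{q_1}\subseteq\mathcal{E}$ to $\overline{\mathcal{M}^{p_0}_{q_0}\cap\mathcal{M}^{p_1}_{q_1}}^{\mathcal{M}^p_q}\subseteq\mathcal{E}$ one must know that $\mathcal{E}$ is closed in $\mathcal{M}^p_q$, and this is the main obstacle. I would prove it by a level-set comparison. For $f_k\to f$ in $\mathcal{M}^p_q$ with $f_k\in\mathcal{E}$, the inclusion $\{|f|>2N\}\subseteq\{|f_k|>N\}\cup\{|f-f_k|>N\}$ lets one dominate $\|\chi_{\{|f|>2N\}}f\|_{\mathcal{M}^p_q}$ by $\|\chi_{\{|f_k|>N\}}f_k\|_{\mathcal{M}^p_q}$ (small for $N$ large since $f_k\in\mathcal{E}$) plus error terms bounded by $\|f-f_k\|_{\mathcal{M}^p_q}$; the low-value tail is handled by the analogous inclusion $\{0<|f|<1/(2N)\}\subseteq\{0<|f_k|<1/N\}\cup\{|f-f_k|>1/(2N)\}$, noting that on the second piece $|f|\le|f-f_k|$ pointwise. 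Choosing first $k$ and then $N$ gives $f\in\mathcal{E}$. For (ii) it then remains to prove ``$\supseteq$''. Take $f\in[\mathcal{M}^{p_0}_{q_0},\mathcal{M}^{p_1}_{q_1}]_\theta=\mathcal{E}$ and approximate it in $\mathcal{M}^p_q$ by the bounded truncations $g_N$, which lie in $L^\infty\cap\mathcal{M}^{p_0}_{q_0}\cap\mathcal{M}^{p_1}_{q_1}\subseteq\overline{\mathcal{M}}{}^{p_0}_{q_0}\cap\overline{\mathcal{M}}{}^{p_1}_{q_1}$. The key observation is that for a bounded $u\in\mathcal{M}^{p_0}_{q_0}\cap\mathcal{M}^{p_1}_{q_1}$ the Calderón-type function $F(z):={\rm sgn}(u)\,|u|^{p(\frac{1-z}{p_0}+\frac{z}{p_1})}$ of \eqref{eq:171220-20} has bounded boundary traces, $|F(k+it)|=|u|^{p/p_k}$, so that $F(k+it)\in L^\infty\cap\mathcal{M}^{p_k}_{q_k}\subseteq\overline{\mathcal{M}}{}^{p_k}_{q_k}$; hence $F\in\mathcal{F}(\overline{\mathcal{M}}{}^{p_0}_{q_0},\overline{\mathcal{M}}{}^{p_1}_{q_1})$, and the propriety scaling gives $\|u\|_{[\overline{\mathcal{M}}{}^{p_0}_{q_0},\overline{\mathcal{M}}{}^{p_1}_{q_1}]_\theta}\le\max\bigl(\|u\|_{\mathcal{M}^p_q}^{q/q_0},\|u\|_{\mathcal{M}^p_q}^{q/q_1}\bigr)$.

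Applying this bound to $u=g_N-g_M$ and using that $\{g_N\}$ is $\mathcal{M}^p_q$-Cauchy shows $\{g_N\}$ is Cauchy in the Banach space $[\overline{\mathcal{M}}{}^{p_0}_{q_0},\overline{\mathcal{M}}{}^{p_1}_{q_1}]_\theta$; since this space embeds continuously into $\mathcal{M}^p_q$ by (ii)-``$\subseteq$'' and \eqref{eq:CPP}, its limit must coincide with $f$, so $f\in[\overline{\mathcal{M}}{}^{p_0}_{q_0},\overline{\mathcal{M}}{}^{p_1}_{q_1}]_\theta$, completing (ii) and hence the theorem. I expect the closedness of $\mathcal{E}$ in step (i) to be the genuinely delicate point, because the truncation $f\mapsto\chi_{\{1/N\le|f|\le N\}}f$ is nonlinear and the level sets of $f$ and of an approximant $f_k$ need not be comparable; by contrast, the admissibility of $F$ into the closed subspaces in step (ii) is routine once one notes that every bounded Morrey function already belongs to $\overline{\mathcal{M}}{}^{p_k}_{q_k}$ and that \eqref{eq:171220-20} is a standard admissible path for bounded functions.
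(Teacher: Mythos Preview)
This theorem is not proved in the present paper: it is quoted from \cite{HS2} as a preliminary result (Section~2.2) and is used as a black box in the proof of Theorem~\ref{thm:171213-1}. So there is no ``paper's own proof'' to compare against here. That said, your argument is essentially correct and, pleasantly, it parallels almost exactly the machinery the paper develops for the \emph{vanishing} subspaces: your Calder\'on path $F$ and the Cauchy-sequence argument for $\{g_N\}$ are precisely the devices of Proposition~\ref{pr:171218-1} and the proof of Theorem~\ref{thm:171213-1}, specialized to the pair $(\overline{\mathcal M}{}^{p_0}_{q_0},\overline{\mathcal M}{}^{p_1}_{q_1})$ in place of $(V_\infty\mathcal M^{p_0}_{q_0},V_\infty\mathcal M^{p_1}_{q_1})$. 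Your closedness proof for $\mathcal E$ also goes through once you sharpen the set-inclusion bookkeeping slightly (on $\{|f|>2N\}\cap\{|f_k|\le N\}$ one has $|f_k|\le N<|f-f_k|$, and on $\{0<|f|<1/(2N)\}\cap\{|f_k|\ge 1/N\}$ one has $|f_k|\le 2|f-f_k|$, so the residual pieces are dominated by $\|f-f_k\|_{\mathcal M^p_q}$).

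One imprecision worth flagging: the assertion that for every \emph{bounded} $u\in\mathcal M^{p_0}_{q_0}\cap\mathcal M^{p_1}_{q_1}$ the map $F(z)=\mathrm{sgn}(u)\,|u|^{p((1-z)/p_0+z/p_1)}$ lies in $\mathcal F(\overline{\mathcal M}{}^{p_0}_{q_0},\overline{\mathcal M}{}^{p_1}_{q_1})$ is stronger than what you actually need or can easily prove. Membership in $\mathcal F$ requires continuity of $F$ on $\overline S$ in the sum-space norm and of $t\mapsto F(k+it)$ in $\mathcal M^{p_k}_{q_k}$, and the standard estimate (as in Proposition~\ref{pr:171218-1}) uses a \emph{two-sided} bound $1/N\le |u|\le N$ on the support of $u$ to control $|\log|u||$. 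Boundedness from above alone does not obviously suffice. Fortunately your application is to $u=g_N-g_M$, which equals $f\cdot\chi_{\{1/M\le|f|<1/N\}\cup\{N<|f|\le M\}}$ and therefore satisfies $1/M\le|u|\le M$ on its support; so the required continuity and holomorphy follow exactly as in the paper's proof of Proposition~\ref{pr:171218-1}. It would be cleaner to state the lemma in that two-sided form (or, equivalently, to work with $F_M-F_N$ directly rather than with $F$ built from $g_N-g_M$, which is what the paper does in the displayed estimate for $\|F_M(\theta)-F_N(\theta)\|$ in the proof of Theorem~\ref{thm:171213-1}).
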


\begin{theorem}\label{thm:HS2-2}{\rm \cite{HS2}}
Keep the same assumption as in Theorem 
\ref{thm:171213-2}. Then 
\[
[\overline{\mathcal{M}}{}^{p_0}_{q_0}, 
\overline{\mathcal{M}}{}^{p_1}_{q_1}]^\theta
=
{\mathcal{M}}{}^p_q.
\]
\end{theorem}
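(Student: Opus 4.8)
\emph{Overview.} The plan is to prove the two inclusions of $[\overline{\mathcal{M}}{}^{p_0}_{q_0},\overline{\mathcal{M}}{}^{p_1}_{q_1}]^\theta=\mathcal{M}^p_q$ separately. The inclusion $\subseteq$ is soft; the inclusion $\supseteq$ will be obtained by feeding the extremal function $G$ of Proposition \ref{prop:Lemarie-HS} into the second interpolation functor and checking that it actually lives on the \emph{closed} subspaces rather than merely on $\mathcal{M}^{p_k}_{q_k}$.

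\emph{The inclusion $\subseteq$.} Each $\overline{\mathcal{M}}{}^{p_k}_{q_k}$ is a closed subspace of $\mathcal{M}^{p_k}_{q_k}$ carrying the restricted norm, so $\overline{\mathcal{M}}{}^{p_0}_{q_0}+\overline{\mathcal{M}}{}^{p_1}_{q_1}$ embeds continuously into $\mathcal{M}^{p_0}_{q_0}+\mathcal{M}^{p_1}_{q_1}$, and every $G\in\mathcal{G}(\overline{\mathcal{M}}{}^{p_0}_{q_0},\overline{\mathcal{M}}{}^{p_1}_{q_1})$ also belongs to $\mathcal{G}(\mathcal{M}^{p_0}_{q_0},\mathcal{M}^{p_1}_{q_1})$ with no larger norm. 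Passing to $G'(\theta)$ and invoking Theorem \ref{thm:Lemarie} gives $[\overline{\mathcal{M}}{}^{p_0}_{q_0},\overline{\mathcal{M}}{}^{p_1}_{q_1}]^\theta\subseteq[\mathcal{M}^{p_0}_{q_0},\mathcal{M}^{p_1}_{q_1}]^\theta=\mathcal{M}^p_q$.

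\emph{The inclusion $\supseteq$.} Fix $f\in\mathcal{M}^p_q$ and take $F,G$ as in Proposition \ref{prop:Lemarie-HS}. Carrying out the $t$-integration in \eqref{eq:171220-210} I obtain the closed form $G(z)=\dfrac{f\,\bigl(|f|^{pd(z-\theta)}-1\bigr)}{pd\,\ln|f|}$, where $d:=\frac1{p_1}-\frac1{p_0}$ and the value is taken by continuity on $\{|f|\in\{0,1\}\}$; in particular $G(\theta)=0$ and $G'(\theta)=F(\theta)=f$. Since Proposition \ref{prop:Lemarie-HS} already gives $G\in\mathcal{G}(\mathcal{M}^{p_0}_{q_0},\mathcal{M}^{p_1}_{q_1})$, it remains only to promote this to $G\in\mathcal{G}(\overline{\mathcal{M}}{}^{p_0}_{q_0},\overline{\mathcal{M}}{}^{p_1}_{q_1})$, for then $f=G'(\theta)\in[\overline{\mathcal{M}}{}^{p_0}_{q_0},\overline{\mathcal{M}}{}^{p_1}_{q_1}]^\theta$.

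\emph{The key estimate and main obstacle.} Recall that $\mathcal{G}$ constrains the boundary \emph{increments}, not the pointwise boundary values. On $\mathrm{Re}(z)=0$ the constant $-1$ cancels: with $\gamma:=\frac{p}{p_0}-1$ one finds $G(it)-G(it')=\dfrac{f|f|^\gamma}{pd\,\ln|f|}\bigl(e^{ipdt\ln|f|}-e^{ipdt'\ln|f|}\bigr)$, whence, using $|e^{i\alpha}-e^{i\beta}|\le\min(2,|\alpha-\beta|)$, $\ |G(it)-G(it')|\le\dfrac{|f|^{p/p_0}}{|pd|\,\bigl|\ln|f|\bigr|}\,\min\bigl(2,\ |pd|\,|t-t'|\,\bigl|\ln|f|\bigr|\bigr)$. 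The second branch yields the Lipschitz bound $\|G(it)-G(it')\|_{\mathcal{M}^{p_0}_{q_0}}\le\|f\|_{\mathcal{M}^p_q}^{p/p_0}\,|t-t'|$ (inherited from Proposition \ref{prop:Lemarie-HS}, using $\||f|^{p/p_0}\|_{\mathcal{M}^{p_0}_{q_0}}=\|f\|_{\mathcal{M}^p_q}^{p/p_0}$), while the first branch carries the decisive factor $1/\bigl|\ln|f|\bigr|$. To see that $g:=G(it)-G(it')$ lies in the \emph{closed} subspace I would split $g=g\chi_{\{\delta\le|f|\le1/\delta\}}+g\chi_{\{|f|<\delta\}}+g\chi_{\{|f|>1/\delta\}}$: the middle piece is bounded, hence in $L^\infty\cap\mathcal{M}^{p_0}_{q_0}\subseteq\overline{\mathcal{M}}{}^{p_0}_{q_0}$, whereas on the two tails $\bigl|\ln|f|\bigr|\ge\ln(1/\delta)$ gives $\|g\chi_{\{|f|<\delta\}}\|_{\mathcal{M}^{p_0}_{q_0}}+\|g\chi_{\{|f|>1/\delta\}}\|_{\mathcal{M}^{p_0}_{q_0}}\le\dfrac{C\|f\|_{\mathcal{M}^p_q}^{p/p_0}}{\ln(1/\delta)}\to0$ as $\delta\to0$. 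Thus $g\in\overline{\mathcal{M}}{}^{p_0}_{q_0}$, and $\mathrm{Re}(z)=1$ is identical with $p_0$ replaced by $p_1$. The same size-decomposition, applied to $G(z)\chi_{\{|f|>1\}}$ and $G(z)\chi_{\{|f|\le1\}}$ and using that the closed form again carries $1/\bigl|\ln|f|\bigr|$ on the extreme-value sets, shows $G(z)\in\overline{\mathcal{M}}{}^{p_0}_{q_0}+\overline{\mathcal{M}}{}^{p_1}_{q_1}$ with the required growth, so condition~(2) holds for the subspace couple; holomorphy and sum-space continuity are inherited verbatim from Proposition \ref{prop:Lemarie-HS}. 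The crux, and the only place where real work is needed, is exactly this logarithmic gain: multiplication by $1/\ln|f|$ turns the generally non-truncation-approximable Morrey functions $|f|^{p/p_k}$ into elements of $\overline{\mathcal{M}}{}^{p_k}_{q_k}$, which is precisely why the second interpolation recovers all of $\mathcal{M}^p_q$ whereas the first, by Theorem \ref{thm:HS2}, captures only the truncation-approximable functions.
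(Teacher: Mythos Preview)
Your argument is correct and is essentially the same mechanism the paper employs; note that Theorem~\ref{thm:HS2-2} itself is quoted from \cite{HS2} without proof in this paper, but the identical technique is carried out in Section~4 for the $V_\infty$ and $V^{(*)}$ subspaces. There the paper splits $G(z)=G_0(z)+G_1(z)$ along $\{|f|\le 1\}$, approximates each piece by its truncation to $\{\varepsilon\le|f|\le 1\}$ (respectively $\{1\le|f|\le\varepsilon^{-1}\}$), and uses exactly your observation that the closed form of $G$ carries a factor $1/|\ln|f||$, so the remainder is $O(1/|\log\varepsilon|)$ in the Morrey norm; the boundary increments $G(k+it)-G(k)$ are handled by the same truncation with the same logarithmic gain---this is precisely your ``key estimate.''
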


We now recall the complex interpolation results of the diamond spaces in \cite{HNS}. 
To state these results, we recall the following notation. 
\begin{definition}\label{d-1}
Let $\psi\in C^\infty_{\rm c}(\mathbb{R}^n)$
satisfy
$\chi_{Q(4)}\leq \psi\leq \chi_{Q(8)}$,
where $Q(r):=[-r,r]^n$.
Set $\varphi_0:=\psi$ and for $j\in\N$, define
\begin{equation*}\label{eq:151113-21}
\varphi_j:=\psi(2^{-j}\cdot)-\psi(2^{-j+1}\cdot).
\end{equation*}
We also define $\varphi_j(D)f:=\mathcal{F}^{-1}(\varphi_j \cdot \mathcal{F}f)$, where $\mathcal{F}$ and $\mathcal{F}^{-1}$
denote the Fourier transform and its inverse. 
For $a\in (0,1)$, $J\in \mathbb{N}$, and a measurable function $f$, we define
\[S(f):=\left(\sum_{j=0}^\infty|\varphi_j(D)f|^2\right)^{\frac12}
\ 
{\rm and}
\ 
S(f;a,J):=
\chi_{\{a\le S(f)\le a^{-1}\}}
\left(\sum_{j=J}^\infty|\varphi_j(D)f|^2\right)^{\frac12}.
\]
\end{definition}
Using the notation in Definition \ref{d-1}, let us state the description of complex interpolation of diamond spaces.
\begin{theorem}
{\rm \cite[Theorem 1.4]{HNS}}
\label{thm:HNS}
Let $\theta \in (0,1)$, $1<q_0\le p_0<\infty$,
and $1<q_1\le p_1<\infty$. 
Assume the condition \eqref{eq:prop}.
Define $p$ and $q$ by \eqref{eq:pq}.
Then 
\begin{align*}
[\overset{\diamond}{{\mathcal M}}{}^{p_0}_{q_0},
\overset{\diamond}{{\mathcal M}}{}^{p_1}_{q_1}]_\theta
=
\{f\in
\overset{\diamond}{{\mathcal M}}{}^{p}_{q}
: \lim_{N\to \infty}
\|f-\chi_{\{1/N\le |f|\le N\}}f\|_{\cM^p_q}=0
\}
\end{align*}
and
\begin{align*}
[\overset{\diamond}{{\mathcal M}}{}^{p_0}_{q_0},
\overset{\diamond}{{\mathcal M}}{}^{p_1}_{q_1}]^\theta
=
\bigcap_{0<a<1}
\left\{
f \in {\mathcal M}^p_q
\,:\,
\lim_{J \to \infty}
\|S(f;a,J)\|_{{\mathcal M}^p_q}
=0
\right\}.
\end{align*}
\end{theorem}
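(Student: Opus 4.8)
\noindent The plan is to prove each of the two identities in Theorem~\ref{thm:HNS} by a pair of inclusions, reducing as much as possible to the known descriptions of the interpolation of the full Morrey spaces (Theorems~\ref{thm:HS2} and~\ref{thm:Lemarie}) and the $F\mapsto G$ construction of Proposition~\ref{prop:Lemarie-HS}, and handling the ``diamond'' refinement by a Littlewood--Paley truncation. Write $\mathcal{D}^p_q$ for the class of generators of $\overset{\diamond}{\cM}{}^p_q$ ($C^\infty$ functions all of whose derivatives, multiplied by arbitrary polynomials, lie in $\cM^p_q$), so that $\overset{\diamond}{\cM}{}^p_q=\overline{\mathcal{D}^p_q}^{\cM^p_q}$ (the degenerate case $q_0=q_1$, which forces $p_0=p_1$ by~\eqref{eq:prop}, is treated directly). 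For ``$\subseteq$'' in the first identity, the continuous embeddings $\overset{\diamond}{\cM}{}^{p_i}_{q_i}\hookrightarrow\cM^{p_i}_{q_i}$ and monotonicity of the first complex interpolation functor give, via Theorem~\ref{thm:HS2},
\[
[\overset{\diamond}{\cM}{}^{p_0}_{q_0},\overset{\diamond}{\cM}{}^{p_1}_{q_1}]_\theta
\subseteq[\cM^{p_0}_{q_0},\cM^{p_1}_{q_1}]_\theta
=\Bigl\{f\in\cM^p_q:\lim_{N\to\infty}\|f-\chi_{\{1/N\le|f|\le N\}}f\|_{\cM^p_q}=0\Bigr\},
\]
so the amplitude-truncation condition is automatic; that such an $f$ lies in $\overset{\diamond}{\cM}{}^p_q$ follows from Lemma~\ref{lem:dense} (the intersection $\overset{\diamond}{\cM}{}^{p_0}_{q_0}\cap\overset{\diamond}{\cM}{}^{p_1}_{q_1}$ is dense in the interpolation space), a mollification-and-truncation argument sending that intersection into $\mathcal{D}^{p_0}_{q_0}\cap\mathcal{D}^{p_1}_{q_1}\subseteq\mathcal{D}^p_q$, the embedding $\cM^{p_0}_{q_0}\cap\cM^{p_1}_{q_1}\subseteq\cM^p_q$ coming from~\eqref{eq:CPP}, and the fact that the interpolation norm dominates $\|\cdot\|_{\cM^p_q}$ while $\overset{\diamond}{\cM}{}^p_q$ is $\cM^p_q$-closed.

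\smallskip\noindent The reverse inclusion is where the real work lies. Given $f\in\overset{\diamond}{\cM}{}^p_q$ with $\lim_N\|f-\chi_{\{1/N\le|f|\le N\}}f\|_{\cM^p_q}=0$, one wants $F\in\mathcal{F}(\overset{\diamond}{\cM}{}^{p_0}_{q_0},\overset{\diamond}{\cM}{}^{p_1}_{q_1})$ with $F(\theta)=f$. The Calder\'on-type path $F(z)=\mathrm{sgn}(f)\,|f|^{p((1-z)/p_0+z/p_1)}$ of Proposition~\ref{prop:Lemarie-HS} has boundary values in $\cM^{p_i}_{q_i}$ but not in $\overset{\diamond}{\cM}{}^{p_i}_{q_i}$, since a non-integer power of a generator loses smoothness and decay. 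I would repair this by a coordinated truncation: first use the amplitude hypothesis to replace $f$ by $\chi_{\{1/N\le|f|\le N\}}f$ up to $\cM^p_q$-small error, then mollify and apply a finite Littlewood--Paley sum $\sum_{j=0}^{J}\varphi_j(D)$ to obtain a band-limited function $g$, which one checks lies in $\mathcal{D}^{p_0}_{q_0}\cap\mathcal{D}^{p_1}_{q_1}$; for such $g$ the power-function path does land in $\mathcal{F}(\overset{\diamond}{\cM}{}^{p_0}_{q_0},\overset{\diamond}{\cM}{}^{p_1}_{q_1})$ with $\mathcal{F}$-norm bounded by a constant times $\|g\|_{\cM^p_q}$, the needed uniform-in-$z$ Morrey estimates coming from the Littlewood--Paley (square-function) characterization of $\cM^p_q$, legitimate since $q_i>1$. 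Letting the mollification parameter go to $0$ and $N,J\to\infty$ jointly, the approximants converge in the complete space $[\overset{\diamond}{\cM}{}^{p_0}_{q_0},\overset{\diamond}{\cM}{}^{p_1}_{q_1}]_\theta$ to $f$, the error being controlled exactly as in the proof of Theorem~\ref{thm:HS2}. Proving the uniform Morrey bounds for band-limited pieces along the whole strip, and that the vanishing hypothesis annihilates the truncation error in the $\mathcal{F}$-norm, is the technical heart and the main obstacle.

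\smallskip\noindent For the second identity I would transfer to the first via Lemma~\ref{lem:160417-3}. If $G\in\mathcal{G}(\overset{\diamond}{\cM}{}^{p_0}_{q_0},\overset{\diamond}{\cM}{}^{p_1}_{q_1})$ and $H_k$ denotes the difference quotient~\eqref{eq:160417-3}, then each $H_k(\theta)$ lies in the first interpolation space, hence satisfies the amplitude-truncation condition; letting $k\to\infty$ so that $H_k(\theta)\to G'(\theta)$ and tracking what survives the limit converts this into $\lim_{J\to\infty}\|S(G'(\theta);a,J)\|_{\cM^p_q}=0$ for every $a\in(0,1)$ --- the square-function form appearing because the diamond smoothness-and-decay restriction, unlike the mere amplitude restriction (which washes out entirely in the second method, cf.\ Theorems~\ref{thm:Lemarie} and~\ref{thm:HS2-2}), survives the difference-quotient limit but only in a frequency-localized form. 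Conversely, given $f\in\cM^p_q$ with $\lim_{J\to\infty}\|S(f;a,J)\|_{\cM^p_q}=0$ for all $a\in(0,1)$, I would form $F$ as above, set $G(z)=(z-\theta)\int_0^1F(\theta+(z-\theta)t)\,dt$ as in Proposition~\ref{prop:Lemarie-HS} (so $|G(z)|\le(1+|z|)(|f|^{p/p_0}+|f|^{p/p_1})$ by~\eqref{eq:171220-27}), superimpose the same Littlewood--Paley truncation to force $G$ to be diamond-valued with $\|G\|_{\mathcal{G}}$ controlled by $\|f\|_{\cM^p_q}$, and use the square-function vanishing hypothesis to send the truncation error to $0$ in $\mathcal{G}$, yielding $f=G'(\theta)$. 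As with the first identity, the single non-routine ingredient is making the frequency-truncated Calder\'on path live in the diamond scale with norms controlled uniformly on $\overline{S}$; the rest is bookkeeping with Lemmas~\ref{lem:dense} and~\ref{lem:160417-3} and the established Morrey descriptions.
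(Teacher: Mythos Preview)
This theorem is not proved in the present paper at all: it is stated in the preliminaries section as a quotation of \cite[Theorem~1.4]{HNS} and then used as a black box (only to deduce the corollary on $[\mathbb{M}^{p_0}_{q_0},\mathbb{M}^{p_1}_{q_1}]$ via Theorem~\ref{thm:171218-1}). There is therefore no proof here against which to compare your proposal; a genuine comparison would require consulting \cite{HNS}.

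That said, a few remarks on the proposal itself. The overall architecture --- monotonicity and Lemma~\ref{lem:dense} for one inclusion, a truncated Calder\'on power path for the other, and the difference-quotient device of Lemma~\ref{lem:160417-3} to pass from $[\cdot,\cdot]^\theta$ to $[\cdot,\cdot]_\theta$ --- is the standard template and matches how the analogous results (Theorems~\ref{thm:HS2}, \ref{thm:HS2-2}, \ref{thm:171213-1}, \ref{thm:171213-2}) are handled in this paper. However, several points are only sketched and at least one looks problematic. First, your description of the generating class $\mathcal{D}^p_q$ imposes a polynomial-weight condition that is absent from the paper's definition of $\overset{\diamond}{\cM}{}^p_q$ (only $\partial^\alpha f\in\cM^p_q$ for all $\alpha$ is required). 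Second, and more seriously, the assertion that for a band-limited approximant $g$ the path $z\mapsto\mathrm{sgn}(g)\,|g|^{p((1-z)/p_0+z/p_1)}$ lands in $\mathcal{F}(\overset{\diamond}{\cM}{}^{p_0}_{q_0},\overset{\diamond}{\cM}{}^{p_1}_{q_1})$ is exactly the difficulty the diamond restriction creates: taking $|\cdot|$ and fractional powers destroys smoothness at the zero set of $g$, so the boundary values need not lie in the diamond space, and ``band-limited'' alone does not rescue this. Third, the passage from the amplitude-truncation condition surviving in $[\cdot,\cdot]_\theta$ to the square-function condition $\lim_{J}\|S(f;a,J)\|_{\cM^p_q}=0$ surviving in $[\cdot,\cdot]^\theta$ is asserted but not argued; this is precisely where the Littlewood--Paley machinery of Definition~\ref{d-1} has to do real work, and your sketch does not indicate how.
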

\section{The first complex interpolation of vanishing Morrey spaces}
\begin{lemma}\label{lem:171223-1}
Let $1\le q\le p<\infty$. Then, $\overline{\mathcal{M}}{}^{p}_q \subseteq V_0\mathcal{M}^p_q$. 
\end{lemma}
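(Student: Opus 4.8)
The plan is to show that $L^\infty \cap \mathcal{M}^p_q$ is contained in $V_0\mathcal{M}^p_q$, and then deduce the inclusion $\overline{\mathcal{M}}{}^p_q \subseteq V_0\mathcal{M}^p_q$ by a density/closedness argument. For the first step, fix $f \in L^\infty \cap \mathcal{M}^p_q$. For any ball $B(x,r)$ one estimates directly
\[
|B(x,r)|^{\frac1p}\left(\frac{1}{|B(x,r)|}\int_{B(x,r)}|f(y)|^q\,dy\right)^{\frac1q}
\le |B(x,r)|^{\frac1p}\,\|f\|_{L^\infty},
\]
and since $|B(x,r)|^{\frac1p} = c_n r^{n/p} \to 0$ as $r \to 0$ (uniformly in $x$), we get $m(f,p,q;r)\to 0$, i.e.\ $f \in V_0\mathcal{M}^p_q$.

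For the second step, I would first check that $V_0\mathcal{M}^p_q$ is a closed subspace of $\mathcal{M}^p_q$: if $f_k \to f$ in $\mathcal{M}^p_q$ with each $f_k \in V_0\mathcal{M}^p_q$, then from the quasi-triangle-type bound $m(f,p,q;r) \le m(f-f_k,p,q;r) + m(f_k,p,q;r) \le \|f-f_k\|_{\mathcal{M}^p_q} + m(f_k,p,q;r)$, letting $r\to 0$ and then $k\to\infty$ shows $\limsup_{r\to0} m(f,p,q;r)=0$, so $f \in V_0\mathcal{M}^p_q$. Combining this with the first step: $\overline{\mathcal{M}}{}^p_q = \overline{L^\infty \cap \mathcal{M}^p_q}^{\mathcal{M}^p_q} \subseteq \overline{V_0\mathcal{M}^p_q}^{\mathcal{M}^p_q} = V_0\mathcal{M}^p_q$.

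There is no real obstacle here; the lemma is essentially a bookkeeping exercise. The only points requiring a line of care are the uniformity in $x$ of the bound $|B(x,r)|^{1/p}\|f\|_{L^\infty}$ (immediate, since $|B(x,r)|$ does not depend on $x$) and the fact that $m(f,p,q;r)$ obeys the triangle inequality in $f$ for each fixed $r$ (which follows from Minkowski's inequality in $L^q(B(x,r))$ followed by taking the supremum over $x$), so that the closedness argument goes through.
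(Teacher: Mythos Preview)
Your proof is correct and follows essentially the same approach as the paper: show $L^\infty \cap \mathcal{M}^p_q \subseteq V_0\mathcal{M}^p_q$ via the bound $m(f,p,q;r)\lesssim r^{n/p}\|f\|_{L^\infty}$, then invoke the closedness of $V_0\mathcal{M}^p_q$ in $\mathcal{M}^p_q$. The only difference is that you spell out the closedness argument explicitly, whereas the paper simply asserts it.
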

\begin{proof}
Let $g\in L^\infty \cap \mathcal{M}^p_q$. Then, for every $r>0$, we have
\[
m(g, p, q; r)\lesssim \|g\|_{L^\infty} r^{\frac{n}{p}},
\]
so $\lim\limits_{r\to 0^+} m(g, p, q; r)=0$. Hence, $g\in V_0\mathcal{M}^p_q$. 
Thus, $L^\infty \cap \mathcal{M}^p_q \subseteq V_0\mathcal{M}^p_q$. 
Since $V_0\mathcal{M}^p_q$ is a closed subspace of $\mathcal{M}^p_q$, we conclude that $\overline{\mathcal{M}}{}^{p}_q \subseteq V_0\mathcal{M}^p_q$. 
\end{proof}

\begin{lemma}\label{lem:171227-1}
Let $1\le q< p<\infty$. Then $L^\infty_{\rm c} \subseteq V_\infty\mathcal{M}^p_q \cap V^{(*)}\mathcal{M}^p_q$.
\end{lemma}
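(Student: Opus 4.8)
The plan is to show $L^\infty_{\rm c}\subseteq V_\infty\mathcal{M}^p_q$ and $L^\infty_{\rm c}\subseteq V^{(*)}\mathcal{M}^p_q$ separately, for $f\in L^\infty_{\rm c}$ with $\supp f\subseteq B(0,R)$ and $\|f\|_{L^\infty}\le A$. First I would record the basic estimate $m(f,p,q;r)\lesssim A\,|B(x,r)\cap B(0,R)|^{1/p}\,|B(x,r)|^{-1/p}\cdot|B(x,r)|^{1/p}$, more precisely
\[
|B(x,r)|^{\frac1p}\left(\frac{1}{|B(x,r)|}\int_{B(x,r)}|f|^q\right)^{\frac1q}
\le A\,|B(x,r)\cap B(0,R)|^{\frac1q}\,|B(x,r)|^{\frac1p-\frac1q}.
\]
For the $V_\infty$ part, take $r\to\infty$: since $|B(x,r)\cap B(0,R)|\le |B(0,R)|$ is bounded uniformly in $x$, while $|B(x,r)|^{\frac1p-\frac1q}\to 0$ because $p>q$ forces $\frac1p-\frac1q<0$, we get $\sup_{x}m(f,p,q;r)\to 0$ as $r\to\infty$. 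Hence $f\in V_\infty\mathcal{M}^p_q$ (that $f\in\mathcal{M}^p_q$ is the boundedness of the same supremum over all $r>0$, which follows by splitting into $r\le 1$ and $r\ge 1$ and using the same bound).

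For the $V^{(*)}$ part, the quantity to control is $\sup_{x}\int_{B(x,1)}|f(y)|^q\chi_{\mathbb{R}^n\setminus B(0,N)}(y)\,dy$. Since $\supp f\subseteq B(0,R)$, the integrand vanishes identically once $N\ge R+1$: indeed $B(0,R)\cap(\mathbb{R}^n\setminus B(0,N))=\emptyset$ when $N\ge R$, so in fact $\int_{B(x,1)}|f(y)|^q\chi_{\mathbb{R}^n\setminus B(0,N)}(y)\,dy=0$ for all $x$ as soon as $N>R$. Therefore the limit as $N\to\infty$ is trivially $0$, giving $f\in V^{(*)}\mathcal{M}^p_q$.

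There is essentially no obstacle here; the only point requiring a little care is to confirm $f\in\mathcal{M}^p_q$ in the first place (so that the vanishing conditions are being imposed on an element of the Morrey space), but this is immediate from $f\in L^\infty_{\rm c}$: for $r\le 1$ one uses $m(f,p,q;r)\le A\,|B(x,r)|^{1/p}\le A\,|B(0,1)|^{1/p}$, and for $r\ge 1$ one uses the display above with $|B(x,r)\cap B(0,R)|\le |B(0,R)|$ and $|B(x,r)|^{1/p-1/q}\le |B(0,1)|^{1/p-1/q}$. Combining the two inclusions yields $L^\infty_{\rm c}\subseteq V_\infty\mathcal{M}^p_q\cap V^{(*)}\mathcal{M}^p_q$, as claimed.
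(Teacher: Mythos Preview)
Your proof is correct and follows essentially the same approach as the paper: for $V_\infty$ you exploit the negative exponent $\frac1p-\frac1q<0$ together with the uniform bound on the mass of $|f|^q$, and for $V^{(*)}$ you use that compact support makes the tail integral identically zero for large $N$. The only cosmetic difference is that the paper packages the $V_\infty$ estimate as $m(f,p,q;r)\lesssim r^{n/p-n/q}\|f\|_{L^q}$ rather than writing out the $L^\infty$ bound and the intersection volume, and the paper does not separately verify $f\in\mathcal{M}^p_q$ (which your argument does, harmlessly).
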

\begin{proof}
Let $f\in L^\infty_{\rm c}$. Then, for every $r>0$, we have
\[
m(f, p, q;r)
\lesssim 
r^{\frac{n}{p}-\frac{n}{q}}
\|f\|_{L^q}.
\]
Consequently, $\lim\limits_{r\to \infty}m(f, p, q;r)=0$. Therefore, $f\in V_\infty \mathcal{M}^p_q$. We now show that 
\begin{align}\label{eq:171227-4}
f\in V^{(*)} \mathcal{M}^p_q.
\end{align}
For every $N\in \mathbb{N}$, we  have 
\[
\sup_{x\in \mathbb{R}^n}
\int_{B(x,1)} |f(y)|^q \chi_{\mathbb{R}^n \setminus B(0,N)}(y) \ dy\le \|f\chi_{\mathbb{R}^n \setminus B(0,N)}\|_{L^q}^q.
\]
Therefore, 
since the right-hand side is zero for large $N$
we have 
\[
\lim_{N\to \infty}
\sup_{x\in \mathbb{R}^n}
\int_{B(x,1)} |f(y)|^q \chi_{\mathbb{R}^n \setminus B(0,N)}(y) \ dy=0,
\]
which implies \eqref{eq:171227-4}. 
\end{proof}


\begin{lemma}\label{lem:171218-1}
Let $\theta \in (0,1)$, $1\le q_0\le p_0<\infty$, 
and $1\le q_1\le p_1<\infty$. 
Define $p$ and $q$ by 
\[
\frac1p:=\frac{1-\theta}{p_0}+\frac{\theta}{p_1}
\quad 
{\rm and}
\quad
\frac1q:=\frac{1-\theta}{q_0}+\frac{\theta}{q_1}.
\]
Then we have the following inclusions:
\[
V_\infty\cM^{p_0}_{q_0}
\cap 
V_\infty\cM^{p_1}_{q_1}
\subseteq 
V_\infty\cM^{p}_{q}, 
\quad 
{\rm and}
\quad 
V^{(*)}\cM^{p_0}_{q_0}
\cap 
V^{(*)}\cM^{p_1}_{q_1}
\subseteq 
V^{(*)}\cM^{p}_{q}.
\]
\end{lemma}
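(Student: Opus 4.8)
The plan is to base everything on a single elementary Hölder inequality relating $|f|^q$ to $|f|^{q_0}$ and $|f|^{q_1}$, and then to use the finiteness of the Morrey norm in the ``other'' space merely as a uniform bound. First I would record the Hölder pair $u:=\frac{q_0}{(1-\theta)q}$ and $v:=\frac{q_1}{\theta q}$: the identity $\frac1q=\frac{1-\theta}{q_0}+\frac{\theta}{q_1}$ gives $\frac1u+\frac1v=1$, and the fact that $q$ lies between $q_0$ and $q_1$ gives $u,v\ge 1$. Then for any measurable set $E$ of finite measure and any $f\in L^{q_0}(E)\cap L^{q_1}(E)$, splitting $|f|^q=|f|^{q(1-\theta)}\,|f|^{q\theta}$ and applying Hölder with exponents $u,v$ yields
\[
\int_E|f(y)|^q\,dy
\le
\left(\int_E|f(y)|^{q_0}\,dy\right)^{\frac{(1-\theta)q}{q_0}}
\left(\int_E|f(y)|^{q_1}\,dy\right)^{\frac{\theta q}{q_1}}.
\]
With $E=B(x,r)$, multiplying through by the appropriate power of $|B(x,r)|$ (using $\frac1p-\frac1q=(1-\theta)(\frac1{p_0}-\frac1{q_0})+\theta(\frac1{p_1}-\frac1{q_1})$) and taking the supremum over $x\in\R^n$, this upgrades to the pointwise-in-$r$ estimate
\[
m(f,p,q;r)\le m(f,p_0,q_0;r)^{1-\theta}\,m(f,p_1,q_1;r)^{\theta}\qquad(r>0),
\]
which in particular shows $f\in\cM^p_q$ whenever $f\in\cM^{p_0}_{q_0}\cap\cM^{p_1}_{q_1}$ (compare \eqref{eq:CPP}).

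From here the $V_\infty$-inclusion is immediate. If $f\in V_\infty\cM^{p_0}_{q_0}\cap V_\infty\cM^{p_1}_{q_1}$ then $f\in\cM^p_q$, and since $m(f,p_1,q_1;r)\le\|f\|_{\cM^{p_1}_{q_1}}$ for every $r$, the displayed estimate gives $m(f,p,q;r)\le m(f,p_0,q_0;r)^{1-\theta}\,\|f\|_{\cM^{p_1}_{q_1}}^{\theta}\to 0$ as $r\to\infty$ because $f\in V_\infty\cM^{p_0}_{q_0}$; hence $f\in V_\infty\cM^p_q$.

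For the $V^{(*)}$-inclusion I would apply the same Hölder inequality with $E=B(x,1)\setminus B(0,N)$, keeping the truncation $\chi_{\R^n\setminus B(0,N)}$ in both factors, and then bound the $q_1$-integral crudely by $\int_{B(x,1)}|f|^{q_1}$, which is $\lesssim\|f\|_{\cM^{p_1}_{q_1}}^{q_1}$ uniformly in $x$ and $N$ (from $m(f,p_1,q_1;1)\le\|f\|_{\cM^{p_1}_{q_1}}$ together with $q_1\le p_1$). Taking the supremum over $x$, the right-hand side is a fixed positive power of $\sup_{x}\int_{B(x,1)}|f|^{q_0}\chi_{\R^n\setminus B(0,N)}$ times a constant, and this tends to $0$ as $N\to\infty$ since $f\in V^{(*)}\cM^{p_0}_{q_0}$; together with $f\in\cM^p_q$ this gives $f\in V^{(*)}\cM^p_q$.

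I do not expect a genuine obstacle here. The only points requiring care are that $q$ lies between $q_0$ and $q_1$ so that the Hölder exponents are admissible (which also makes $L^{q_0}_{\rm loc}\cap L^{q_1}_{\rm loc}\subseteq L^q_{\rm loc}$ automatic), and that the ``vanishing'' hypothesis is used on only one of the two partner spaces at a time while the other contributes nothing beyond the finiteness of its Morrey norm; by the symmetry of the two spaces the roles of the index pairs $(p_0,q_0)$ and $(p_1,q_1)$ may be interchanged.
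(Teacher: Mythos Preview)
Your proof is correct and follows essentially the same route as the paper: both arguments rest on the H\"older estimate $m(f,p,q;r)\le m(f,p_0,q_0;r)^{1-\theta}m(f,p_1,q_1;r)^{\theta}$ (and its analogue with the truncation $\chi_{\mathbb{R}^n\setminus B(0,N)}$ inserted), after which the vanishing conclusions are immediate. The only cosmetic difference is that the paper lets \emph{both} factors tend to zero while you bound one factor by the Morrey norm and use the vanishing hypothesis on the other; either version works.
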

\begin{proof}
We only prove the first inclusion. 
The proof of 
another
inclusion is similar. Let $f\in V_\infty\cM^{p_0}_{q_0}
\cap 
V_\infty\cM^{p_1}_{q_1}$. Then 
\begin{align}\label{eq:171213-1}
\lim_{r\to \infty}
m(f,p_0,q_0;r)
=0
\quad 
{\rm and}
\quad
\lim_{r\to \infty}
m(f,p_1,q_1;r)
=0.
\end{align}
By H\"older's inequality, for every $r>0$, we have
\[
m(f,p,q;r)
\le 
m(f,p_0,q_0;r)^{1-\theta}
m(f,p_1,q_1;r)^{\theta}.
\]
Combining this inequality and \eqref{eq:171213-1}, we get $\lim\limits_{r\to \infty}m(f,p,q;r)=0$, so $f\in V_\infty\cM^p_q$, as desired. 
\end{proof}

\begin{proposition}\label{pr:171218-1}
Let $f\in V_\infty\cM^p_q$ be such that 
$\displaystyle
f=\lim_{N \to \infty}\chi_{\{1/N\le |f|\le N\}}f
$
in ${\mathcal M}^p_q$.
For a fixed $N\in \mathbb{N}$, define
\begin{align}\label{eq:FN}
F_N(z)={\rm sgn}(f)|f|^{p\frac{1-z}{p_0}+p\frac{z}{p_1}}\chi_{\{1/N\le |f| \le N\}}  \quad
(z\in \overline{S}).
\end{align}
Then $F_N(\theta)\in [V_\infty\mathcal{M}^{p_0}_{q_0}, V_\infty\mathcal{M}^{p_1}_{q_1}]_\theta$.
\end{proposition}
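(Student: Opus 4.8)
The plan is to show that the function $F_N$ defined in \eqref{eq:FN} belongs to the interpolation functor $\mathcal{F}(V_\infty\mathcal{M}^{p_0}_{q_0}, V_\infty\mathcal{M}^{p_1}_{q_1})$, which immediately gives $F_N(\theta)\in [V_\infty\mathcal{M}^{p_0}_{q_0}, V_\infty\mathcal{M}^{p_1}_{q_1}]_\theta$. Since $f\in V_\infty\mathcal{M}^p_q\subseteq \mathcal{M}^p_q$, it is already known (from the proof of Theorem \ref{thm:Lemarie} / Proposition \ref{prop:Lemarie-HS} and the references \cite{L14,HS}) that the unrestricted analogue $F(z)={\rm sgn}(f)|f|^{p(\frac{1-z}{p_0}+\frac{z}{p_1})}$ behaves well on the strip; the truncation $\chi_{\{1/N\le|f|\le N\}}$ only helps, since on that set $|f|$ is bounded above and below. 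First I would record the pointwise bound: for $z=k+it$ with $k\in\{0,1\}$,
\[
|F_N(k+it)|=|f|^{p/p_k}\chi_{\{1/N\le|f|\le N\}}\le \min\{N^{p/p_k},|f|^{p/p_k}\},
\]
so $|F_N(k+it)|^{q_k}\le |f|^{p q_k/p_k}=|f|^q$ pointwise (using \eqref{eq:prop}, which gives $pq_k/p_k=q$), hence $F_N(k+it)\in\mathcal{M}^{p_k}_{q_k}$ with $\|F_N(k+it)\|_{\mathcal{M}^{p_k}_{q_k}}\le \|f\|_{\mathcal{M}^p_q}^{q/q_k}$, uniformly in $t$.

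Next I would check the three defining properties of $\mathcal{F}$. Holomorphy of $z\mapsto F_N(z)$ in $S$ as an $\mathcal{M}^{p_0}_{q_0}+\mathcal{M}^{p_1}_{q_1}$-valued map, together with boundedness and continuity on $\overline S$, follows exactly as in the Morrey-space case: on the set $\{1/N\le|f|\le N\}$ the exponent $p(\frac{1-z}{p_0}+\frac{z}{p_1})$ has bounded real part, so $|f|^{p(\frac{1-z}{p_0}+\frac{z}{p_1})}$ is a locally uniformly bounded holomorphic function of $z$ for each fixed point, and dominated convergence (with majorant $|f|^q\in L^1_{\rm loc}$, or rather $\max\{N^{p/p_0},N^{p/p_1}\}$ times $\chi$ of the sublevel set, which lies in $\mathcal{M}^p_q$) upgrades this to strong holomorphy and continuity. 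The genuinely new point is property (2) in the refined form: I must verify that $t\mapsto F_N(k+it)$ takes values in the \emph{vanishing} space $V_\infty\mathcal{M}^{p_k}_{q_k}$, not merely in $\mathcal{M}^{p_k}_{q_k}$. For this, fix $k$ and $t$; since $|F_N(k+it)|=|f|^{p/p_k}\chi_{\{1/N\le|f|\le N\}}$, on any ball $B(x,r)$ we have
\[
m(F_N(k+it),p_k,q_k;r)^{q_k}
=
|B(x,r)|^{\frac{q_k}{p_k}-1}\int_{B(x,r)}|f(y)|^{q}\chi_{\{1/N\le|f|\le N\}}(y)\,dy,
\]
which is $\le m(f,p,q;r)^{q}$ by \eqref{eq:prop}; since $f\in V_\infty\mathcal{M}^p_q$, the right side tends to $0$ as $r\to\infty$, so $F_N(k+it)\in V_\infty\mathcal{M}^{p_k}_{q_k}$. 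Alternatively, one can invoke Lemma \ref{lem:171227-1} together with Lemma \ref{lem:171218-1}, since $F_N(k+it)$ is dominated by a truncation of $f$.

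Finally I would address continuity of $t\mapsto F_N(k+it)$ in the norm of $V_\infty\mathcal{M}^{p_k}_{q_k}$ (equivalently $\mathcal{M}^{p_k}_{q_k}$, since $V_\infty\mathcal{M}^{p_k}_{q_k}$ carries the subspace norm): writing $F_N(k+is)-F_N(k+it)={\rm sgn}(f)\chi_{\{1/N\le|f|\le N\}}|f|^{p/p_k}\big(|f|^{ip(\frac1{p_1}-\frac1{p_0})(s-t)}-1\big)$, the factor in parentheses is bounded by $\min\{2,|{\log|f|}|\,|p(\tfrac1{p_1}-\tfrac1{p_0})|\,|s-t|\}$ and, on $\{1/N\le|f|\le N\}$, $|\log|f||\le\log N$, so the increment is dominated in $\mathcal{M}^{p_k}_{q_k}$ by $C(N)|s-t|\,\||f|^{p/p_k}\chi_{\{1/N\le|f|\le N\}}\|_{\mathcal{M}^{p_k}_{q_k}}$, giving even Lipschitz continuity in $t$; similarly $z\mapsto F_N(z)$ is continuous on $\overline S$. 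The main obstacle is not any single estimate but making sure the refinement from ``values in $\mathcal{M}^{p_k}_{q_k}$'' to ``values in $V_\infty\mathcal{M}^{p_k}_{q_k}$'' is genuinely justified — i.e.\ that the vanishing property at infinity of $f$ is inherited uniformly in $t$ by $F_N(k+it)$; the display above handles this cleanly because the truncation by $\chi_{\{1/N\le|f|\le N\}}$ is independent of $z$, so the bound $m(F_N(k+it),p_k,q_k;r)^{q_k}\le m(f,p,q;r)^q$ holds with a constant independent of $t$. The hypothesis $f=\lim_{N\to\infty}\chi_{\{1/N\le|f|\le N\}}f$ is not needed for this proposition itself (it will be used when assembling the full description \eqref{eq:171213-3}); here only $f\in V_\infty\mathcal{M}^p_q$ matters.
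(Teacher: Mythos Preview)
Your approach is correct and follows the same overall plan as the paper: show $F_N\in\mathcal{F}(V_\infty\mathcal{M}^{p_0}_{q_0},V_\infty\mathcal{M}^{p_1}_{q_1})$ by verifying each clause of the definition. The paper carries this out by splitting $F_N(z)=F_{N,0}(z)+F_{N,1}(z)$ according to $\{|f|\le 1\}$ versus $\{|f|>1\}$ and showing $F_{N,j}(z)\in V_\infty\mathcal{M}^{p_j}_{q_j}$ for every $z\in\overline{S}$; this decomposition is what yields $F_N(z)\in V_\infty\mathcal{M}^{p_0}_{q_0}+V_\infty\mathcal{M}^{p_1}_{q_1}$ for interior $z$, together with continuity and holomorphy in the \emph{vanishing} sum space. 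You bypass the split by exploiting the truncation directly, which is legitimate (on $\{1/N\le|f|\le N\}$ one has $|F_N(z)|\le N^{|p/p_1-p/p_0|}|f|^{p/p_0}$, so in fact $F_N$ maps all of $\overline{S}$ into the single space $V_\infty\mathcal{M}^{p_0}_{q_0}$ with an $N$-dependent but $z$-independent bound), but your write-up only asserts holomorphy and continuity ``as an $\mathcal{M}^{p_0}_{q_0}+\mathcal{M}^{p_1}_{q_1}$-valued map''; you must upgrade this to the $V_\infty$ sum, and the cleanest way is exactly the bound just mentioned plus the lattice property, not a reference to the Morrey case. Also, your ``alternative'' via Lemma~\ref{lem:171227-1} does not apply, since $F_N(k+it)$ need not be compactly supported; your primary argument $m(F_N(k+it),p_k,q_k;r)\le m(f,p,q;r)^{p/p_k}$ is the right one and matches the paper's computation.
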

\begin{proof}
Observe that \eqref{eq:pq} and \eqref{eq:prop} imply 
\begin{align}\label{eq:171220-10}
\frac{p_0}{q_0}
=
\frac{p_1}{q_1}
=
\frac{p}{q}.
\end{align}
Without loss of generality, assume that $p_0>p_1$. 
Define $F_{N,0}(z):=\chi_{\{|f|\le 1\}}F_N(z)$ and 
$F_{N,1}(z):=F_N(z)-F_{N,0}(z)$. Since
\[
|F_{N,0}(z)|=
\chi_{\{\frac1N\le |f|\le 1\}}
|f|^{\frac{p}{p_0}}
|f|^{\left(\frac{p}{p_1}-\frac{p}{p_0}\right){\rm Re}(z)}
\le |f|^{\frac{p}{p_0}},
\]
by using \eqref{eq:171220-10}, 
we have
\begin{align}\label{eq:171220-7}
m(F_{N,0}(z), p_0, q_0;r)
\le 
m\left(|f|^{\frac{p}{p_0}}, p_0, q_0;r\right)
=
m(f, p, q;r)^{\frac{p}{p_0}}.
\end{align}
Taking $r\to 0^{+}$ and using the fact that 
$f\in V_\infty\mathcal{M}^p_q$, we have $F_{N,0}(z)\in V_\infty\mathcal{M}^{p_0}_{q_0}$. Moreover, by \eqref{eq:171220-7}, we also have
\begin{align}\label{eq:171220-8}
\|F_{N,0}(z)\|_{V_\infty\mathcal{M}^{p_0}_{q_0}}
\le 
\|f\|_{\mathcal{M}^p_q}^{\frac{p}{p_0}}.
\end{align}
By a similar argument, we have $F_{N,1}(z)\in V_\infty\mathcal{M}^{p_1}_{q_1}$ and 
\begin{align}\label{eq:171220-9}
\|F_{N,1}(z)\|_{V_\infty\mathcal{M}^{p_1}_{q_1}}
\le 
\|f\|_{\mathcal{M}^p_q}^{\frac{p}{p_1}}.
\end{align}
Combining \eqref{eq:171220-8} and \eqref{eq:171220-9}, we have $F_{N}(z)\in V_\infty\mathcal{M}^{p_0}_{q_0}+V_\infty\mathcal{M}^{p_1}_{q_1}$ and
\begin{align}\label{eq:171220-11}
\sup_{z\in \overline{S}} 
\|F_N(z)\|_{V_\infty\mathcal{M}^{p_0}_{q_0}+V_\infty\mathcal{M}^{p_1}_{q_1}}
\le 
\|f\|_{\mathcal{M}^p_q}^{\frac{p}{p_0}}
+\|f\|_{\mathcal{M}^p_q}^{\frac{p}{p_1}}
<\infty.
\end{align}
We now show the continuity of $F_N$. Let $z\in \overline{S}$ and $h\in \overline{S}$ be such that $z+h\in \overline{S}$. Since
\begin{align*}
|F_{N,0}(z+h)-F_{N,0}(z)|
&=
\left|
|f|^{h\left(\frac{p}{p_1}-\frac{p}{p_0}\right)}
-1
\right|
|F_{N,0}(z)|
\\
&\le 
\left(
e^{|h|\left(\frac{p}{p_1}-\frac{p}{p_0}\right)|\log |f||}-1
\right)
|F_{N,0}(z)|
\\
&\le 
\left(
e^{|h|\left(\frac{p}{p_1}-\frac{p}{p_0}\right)\log N}-1
\right)
|F_{N,0}(z)|,
\end{align*}
by using \eqref{eq:171220-8}, we have
\begin{align}\label{eq:171220-12}
\|F_{N,0}(z+h)-F_{N,0}(z)\|_{V_\infty\mathcal{M}^{p_0}_{q_0}}
\le 
\left(
e^{|h|\left(\frac{p}{p_1}-\frac{p}{p_0}\right)\log N}-1
\right)
\|f\|_{\mathcal{M}^p_q}^{\frac{p}{p_0}}.
\end{align}
Similarly, 
\begin{align}\label{eq:171220-13}
\|F_{N,1}(z+h)-F_{N,1}(z)\|_{V_\infty\mathcal{M}^{p_1}_{q_1}}
\le 
\left(
e^{|h|\left(\frac{p}{p_1}-\frac{p}{p_0}\right)\log N}-1
\right)
\|f\|_{\mathcal{M}^p_q}^{\frac{p}{p_1}}.
\end{align}
Combining \eqref{eq:171220-12} and \eqref{eq:171220-13}, we get
\begin{align*}
\|F_{N}(z+h)-F_{N}(z)\|_{V_\infty\mathcal{M}^{p_0}_{q_0}+V_\infty\mathcal{M}^{p_1}_{q_1}}
\le 
\left(
e^{|h|\left(\frac{p}{p_1}-\frac{p}{p_0}\right)\log N}-1
\right)
\left(\|f\|_{\mathcal{M}^p_q}^{\frac{p}{p_0}}
+
\|f\|_{\mathcal{M}^p_q}^{\frac{p}{p_1}}
\right). 
\end{align*}
This implies
\[
\lim_{h\to 0}
\|F_{N}(z+h)-F_{N}(z)\|_{V_\infty\mathcal{M}^{p_0}_{q_0}+V_\infty\mathcal{M}^{p_1}_{q_1}}
=0. 
\]
Hence, $F_N$ is continuous on $\overline{S}$.
The proof of holomorphicity of $F_N$ in $S$ goes as follows. 
For every $z\in \overline{S}$, define 
\[
F_{N,0}'(z):=F_{N,0}(z)\left(\frac{p}{p_1}-\frac{p}{p_0}\right) \log |f|, 
\ 
F_{N,1}'(z):=F_{N,1}(z)\left(\frac{p}{p_1}-\frac{p}{p_0}\right) \log |f|,
\  
\]
and $F_{N}'(z):=F_{N,0}'(z)+F_{N,1}'(z)$. 
As a consequence of \eqref{eq:171220-8}
and \eqref{eq:171220-9}, we have
\begin{align*}
\|F_N'(z)\|_{V_\infty\mathcal{M}^{p_0}_{q_0}
+V_\infty\mathcal{M}^{p_1}_{q_1}}
&\le 
\|F_{N,0}'(z)\|_{V_\infty\mathcal{M}^{p_0}_{q_0}}
+
\|F_{N,1}'(z)\|_{V_\infty\mathcal{M}^{p_1}_{q_1}}
\\
&\le 
\left(\frac{p}{p_1}-\frac{p}{p_0}\right)
(\log N) \left(\|f\|_{\mathcal{M}^p_q}^{\frac{p}{p_0}}
+\|f\|_{\mathcal{M}^p_q}^{\frac{p}{p_1}}
\right),
\end{align*}
so $F_N'(z)\in V_\infty\mathcal{M}^{p_0}_{q_0}
+V_\infty\mathcal{M}^{p_1}_{q_1}$. Now, let $z\in S$ and $h\in \mathbb{C}\setminus \{0\}$ be such that $z+h\in \overline{S}$. Then
\begin{align}\label{eq:171221-1}
&\left|
\frac{F_{N,0}(z+h)-F_{N,0}(z)}{h}
-F_{N,0}'(z)
\right|
\nonumber
\\
&\qquad\le 
|F_{N,0}(z)|
\left(\frac{p}{p_1}-\frac{p}{p_0}
\right)
|\log |f|| 
\left( e^{|h|\left(\frac{p}{p_1}-\frac{p}{p_0}\right) |\log |f||}
-1\right)
\nonumber
\\
&\qquad\le 
|F_{N,0}(z)|
\left(\frac{p}{p_1}-\frac{p}{p_0}
\right)
(\log N)
\left( e^{|h|\left(\frac{p}{p_1}-\frac{p}{p_0}\right) \log N}
-1\right).
\end{align}
Combining \eqref{eq:171220-8} and \eqref{eq:171221-1}, we get
\begin{align}\label{eq:171221-2}
\left\|
\frac{F_{N,0}(z+h)-F_{N,0}(z)}{h}
-F_{N,0}'(z)
\right\|_{V_\infty\mathcal{M}^{p_0}_{q_0}}
\lesssim
\left( e^{|h|\left(\frac{p}{p_1}-\frac{p}{p_0}\right) \log N}
-1\right)\|f\|_{\mathcal{M}^p_q}^{\frac{p}{p_0}}.
\end{align}
Similarly, 
\begin{align}\label{eq:171221-3}
\left\|
\frac{F_{N,1}(z+h)-F_{N,1}(z)}{h}
-F_{N,1}'(z)
\right\|_{V_\infty\mathcal{M}^{p_1}_{q_1}}
\lesssim
\left( e^{|h|\left(\frac{p}{p_1}-\frac{p}{p_0}\right) \log N}
-1\right)\|f\|_{\mathcal{M}^p_q}^{\frac{p}{p_1}}.
\end{align}
Here the implicit constants
in (\ref{eq:171221-1}) and (\ref{eq:171221-2})
can depend on $N$.
Hence, it follows from \eqref{eq:171221-2} and 
\eqref{eq:171221-3} that 
\[
\lim_{h\to 0}
\left\|
\frac{F_{N}(z+h)-F_{N}(z)}{h}
-F_{N}'(z)
\right\|_{V_\infty\mathcal{M}^{p_0}_{q_0}+V_\infty\mathcal{M}^{p_1}_{q_1}}
=0,
\]
so $F_N$ is holomorphic in $S$. 
Finally, we show the boundedness and continuity of 
the function $t\in \mathbb{R} \mapsto F_N(k+it)\in V_\infty\mathcal{M}^{p_k}_{q_k}$ for each $k\in \{0,1\}$. Note that, by using \eqref{eq:171220-10}, we have
\[
m(F_N(k+it), p_k, q_k;r)
\le 
m(|f|^{\frac{p}{p_k}}, p_k, q_k;r)
=
m(f, p_k, q_k;r)^{\frac{p}{p_k}},
\]
so $F_{N}(k+it)\in V_\infty\mathcal{M}^{p_k}_{q_k}$
and 
\begin{align}\label{eq:171221-4}
\sup_{t\in \mathbb{R}}
\|F_N(k+it)\|_{V_\infty\mathcal{M}^{p_k}_{q_k}}
\le 
\|f\|_{\mathcal{M}^p_q}^{\frac{p}{p_k}}. 
\end{align}
Hence, $t\in \mathbb{R} \mapsto F_N(k+it)\in V_\infty\mathcal{M}^{p_k}_{q_k}$ is bounded. 
Let $t_0\in \mathbb{R}$ be fixed. Then, by \eqref{eq:171221-4}, for every $t\in \mathbb{R}$, we have
\begin{align*}
\|F_N(k+it)-
F_N(k+it_0)\|_{V_\infty\mathcal{M}^{p_k}_{q_k}}
&=
\left\|F_N(k+it_0)\left(|f|^{i\left(\frac{p}{p_0}-\frac{p}{p_1}\right)(t-t_0)}-1\right)
\right\|_{V_\infty\mathcal{M}^{p_k}_{q_k}}
\\
&\le 
\|f\|_{\mathcal{M}^p_q}^{\frac{p}{p_k}}
\left(
e^{\left(\frac{p}{p_1}-\frac{p}{p_0}\right) (\log N)|t_1-t_2|}
-1\right),
\end{align*}
so 
\[
\lim_{t\to t_0}
\|F_N(k+it)-
F_N(k+it_0)\|_{V_\infty\mathcal{M}^{p_k}_{q_k}}
=0.
\] 
This shows that $t\in \mathbb{R} \mapsto F_N(k+it)\in V_\infty\mathcal{M}^{p_k}_{q_k}$ is continuous.
Hence, we have shown that $F_N\in \mathcal{F}(V_\infty\mathcal{M}^{p_0}_{q_0}, V_\infty\mathcal{M}^{p_1}_{q_1})$. 
Thus, $F_N(\theta)\in [V_\infty\mathcal{M}^{p_0}_{q_0}, V_\infty\mathcal{M}^{p_1}_{q_1}]_\theta$.
\end{proof}
\begin{remark}
The similar result is also valid when $V_\infty$ is replaced by $V^{(*)}$.
\end{remark}
We are now ready to prove Theorem \ref{thm:171213-1}.
\begin{proof}[Proof of Theorem \ref{thm:171213-1}]
According to Lemma \ref{lem:171223-1}, we have
$\overline{\mathcal{M}}{}^{p_0}_{q_0} \subseteq V_0\mathcal{M}^{p_0}_{q_0} \subseteq \mathcal{M}^{p_0}_{q_0}$
and 
$\overline{\mathcal{M}}{}^{p_1}_{q_1} \subseteq V_0\mathcal{M}^{p_1}_{q_1} \subseteq \mathcal{M}^{p_1}_{q_1}$. 
Consequently, by virtue of Theorem  \ref{thm:HS2-2}, we have \eqref{eq:171213-2}. 

Next, we only prove  \eqref{eq:171213-3} because the proofs of \eqref{eq:171213-3} and \eqref{eq:171213-4} are similar. 
Let $f\in [V_\infty \mathcal{M}^{p_0}_{q_0}, V_\infty \mathcal{M}^{p_1}_{q_1}]_\theta$. 
Then, by virtue of Lemma \ref{lem:dense}, we can choose $\{f_j\}_{j=1}^\infty \subseteq V_\infty \mathcal{M}^{p_0}_{q_0} \cap V_\infty \mathcal{M}^{p_1}_{q_1}$ such that 
\begin{align}\label{eq:171219-1}
\lim_{j\to \infty}
\|f-f_j\|_{[V_\infty \mathcal{M}^{p_0}_{q_0}, V_\infty \mathcal{M}^{p_1}_{q_1}]_\theta}
=0.
\end{align}
According to Lemma \ref{lem:171218-1}, we have $\{f_j\}_{j=1}^\infty \subseteq V_\infty\mathcal{M}^p_q$.
Combining $[V_\infty\mathcal{M}^{p_0}_{q_0}, V_\infty\mathcal{M}^{p_1}_{q_1}]_\theta \subseteq [\mathcal{M}^{p_0}_{q_0}, \mathcal{M}^{p_1}_{q_1}]_\theta \subseteq \mathcal{M}^p_q$ and 
\eqref{eq:171219-1}, we get
\begin{align}\label{eq:171219-2}
\lim_{j\to \infty}
\|f-f_j\|_{\mathcal{M}^p_q}
=0,
\end{align}
so $f\in V_\infty\mathcal{M}^p_q$. 
Consequently,
\begin{align*}
[V_\infty{\mathcal M}^{p_0}_{q_0}, V_\infty{\mathcal M}^{p_1}_{q_1}]_\theta 
\subseteq
V_\infty \cM^p_q 
\cap 
[{\mathcal M}^{p_0}_{q_0}, {\mathcal M}^{p_1}_{q_1}]_\theta 
\end{align*}
Conversely, let 
$V_\infty \cM^p_q 
\cap 
[{\mathcal M}^{p_0}_{q_0}, {\mathcal M}^{p_1}_{q_1}]_\theta$ . 
Then,  by virtue of Theorem \ref{thm:HS2}, we have
\begin{align}\label{eq:171220-1}
\lim_{N\to \infty}\|f-\chi_{\{1/N\le |f|\le N\}}f\|_{\mathcal{M}^p_q}=0.
\end{align}
Define $F_N(z)$ by \eqref{eq:FN}. Then, by virtue of  Proposition
\ref{pr:171218-1}, we have
$$F_N(\theta)\in [V_\infty\mathcal{M}^{p_0}_{q_0}, V_\infty\mathcal{M}^{p_1}_{q_1}]_\theta.$$ Moreover, for every $M, N \in \mathbb{N}$ with $M>N$, we have
\begin{align*}
\|F_M(\theta)-F_N(\theta)\|_{[V_\infty\mathcal{M}^{p_0}_{q_0}, V_\infty\mathcal{M}^{p_1}_{q_1}]_\theta}
\le 
\max_{k=0,1}
\|f\chi_{\{|f|<\frac1N\}\cup\{|f|>N\}}\|_{\mathcal{M}^p_q}^{\frac{p}{p_k}},
\end{align*}
so by \eqref{eq:171220-1}, we see that 
$\{F_N(\theta)\}_{N=1}^\infty$ is a Cauchy sequence in $[V_\infty\mathcal{M}^{p_0}_{q_0}, V_\infty\mathcal{M}^{p_1}_{q_1}]_\theta$. 
Consequently, there exists $g\in [V_\infty\mathcal{M}^{p_0}_{q_0}, V_\infty\mathcal{M}^{p_1}_{q_1}]_\theta$ such that 
\begin{align}\label{eq:171220-2}
\lim_{N\to \infty}
\|F_N(\theta)-g\|_{[V_\infty\mathcal{M}^{p_0}_{q_0}, V_\infty\mathcal{M}^{p_1}_{q_1}]_\theta}
=0. 
\end{align}
By using $[V_\infty\mathcal{M}^{p_0}_{q_0}, V_\infty\mathcal{M}^{p_1}_{q_1}]_\theta \subseteq [\mathcal{M}^{p_0}_{q_0}, \mathcal{M}^{p_1}_{q_1}]_\theta \subseteq \mathcal{M}^p_q$ again, we see that \eqref{eq:171220-2} implies
\begin{align}\label{eq:171220-3}
\lim_{N\to \infty}
\|F_N(\theta)-g\|_{\mathcal{M}^p_q}
=0. 
\end{align}
Since $f-F_N(\theta)=f-\chi_{\{1/N\le |f|\le N\}}f$, we may combine \eqref{eq:171220-1} and \eqref{eq:171220-3} to obtain $f=g$, so $f\in [V_\infty \mathcal{M}^{p_0}_{q_0}, V_\infty\mathcal{M}^{p_1}_{q_1}]_\theta$.  
This completes the proof of Theorem \ref{thm:171213-1}.
\end{proof}

\section{The second complex interpolation of vanishing Morrey spaces}
First, we show that the vanishing Morrey
spaces $V_\infty\mathcal{M}^p_q$ is a Banach lattice on $\mathbb{R}^n$.
\begin{lemma}\label{lem:171220-1}
Let $1\le q\le p<\infty$. If $f\in V_\infty\mathcal{M}^p_q$ and $|g|\le |f|$, then
$g\in V_\infty\mathcal{M}^p_q$. 
\end{lemma}
\begin{proof}
The assertion follows immediately from the inequality 
\[
m(g, p, q; r)\le 
m(f, p, q;r),
\]
for every $r>0$.
\end{proof}
\begin{remark}\label{rem:171220-1}
By a similar argument, we also can show that 
$V_0\mathcal{M}^p_q$ and $V^{(*)}\mathcal{M}^p_q$ are Banach lattices on $\mathbb{R}^n$.  
\end{remark}

We now prove the following inclusion result, whose proof is similar to that of \cite[Lemma 8]{HS}. 
\begin{lemma}\label{lem:171220-2}
Keep the same assumption as in Theorem \ref{thm:171213-2}. Then 
\[
\mathcal{M}^p_q
\cap 
\overline{V_\infty\mathcal{M}^p_q}^{\mathcal{M}^{p_0}_{q_0}+\mathcal{M}^{p_1}_{q_1}}
\subseteq 
\bigcap_{0<a<b<\infty}
\{f\in \mathcal{M}^p_q:
\chi_{\{a\le |f|\le b\}}f \in V_\infty\mathcal{M}^p_q\}.
\]
\end{lemma}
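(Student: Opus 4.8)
The plan is to take $f$ in the left-hand side and show, for each fixed $0<a<b<\infty$, that $g:=\chi_{\{a\le|f|\le b\}}f$ lies in $V_\infty\mathcal{M}^p_q$. By hypothesis there is a sequence $\{f_j\}_{j=1}^\infty\subseteq V_\infty\mathcal{M}^p_q$ with $\|f-f_j\|_{\mathcal{M}^{p_0}_{q_0}+\mathcal{M}^{p_1}_{q_1}}\to 0$; write $f_j=f_j^{(0)}+f_j^{(1)}$ with $f_j^{(k)}\in\mathcal{M}^{p_k}_{q_k}$ and $\|f-f_j\|\approx\|f_j^{(0)}-f\,\chi\|+\cdots$ controlled. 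The first step is to truncate: set $g_j:=\chi_{\{a\le|f|\le b\}}f_j$. Since $V_\infty\mathcal{M}^p_q$ is a Banach lattice (Lemma \ref{lem:171220-1} and Remark \ref{rem:171220-1}) and on the set $\{a\le|f|\le b\}$ we can compare $|g_j|\le |f_j|$ pointwise, each $g_j\in V_\infty\mathcal{M}^p_q$ provided $f_j\in V_\infty\mathcal{M}^p_q$ — but care is needed because $f_j\in V_\infty\mathcal{M}^p_q$ was assumed, so this is direct. So $g_j\in V_\infty\mathcal{M}^p_q$. The remaining task is to show $g_j\to g$ in $\mathcal{M}^p_q$, which gives $g\in V_\infty\mathcal{M}^p_q$ since the latter is closed in $\mathcal{M}^p_q$.

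The core estimate is that on the annulus $A:=\{a\le|f|\le b\}$, the difference $g-g_j=\chi_A(f-f_j)$ can be controlled in the $\mathcal{M}^p_q$-norm by the $\mathcal{M}^{p_0}_{q_0}+\mathcal{M}^{p_1}_{q_1}$-norm of $f-f_j$, \emph{because} on $A$ the function is bounded above and below. Concretely, using the decomposition $f-f_j=h^{(0)}+h^{(1)}$ with $\|h^{(k)}\|_{\mathcal{M}^{p_k}_{q_k}}$ small, one estimates $\|\chi_A h^{(k)}\|_{\mathcal{M}^p_q}$. Here the trick (as in \cite[Lemma 8]{HS}) is to exploit that $|f|\le b$ forces a favorable comparison when $p_k>p$ and $|f|\ge a$ forces one when $p_k<p$: on the set where $|f_j|$ is large, $|f-f_j|\approx|f_j|$ is large, and on the set where it is small we interpolate. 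More carefully, split $A$ according to whether $|f_j|\le 2b$ or $|f_j|>2b$; on the first piece $|\chi_A h^{(k)}|\lesssim_{a,b}$ (bounded, compactly comparable) so an $L^\infty$-type bound converts the $\mathcal{M}^{p_k}_{q_k}$-smallness into $\mathcal{M}^p_q$-smallness via Hölder with the measure of the relevant ball; on the second piece $|h^{(k)}|\gtrsim|f_j|\gtrsim b$ on $A$, which is incompatible with $|f|\le b$ unless that piece has small measure, again giving smallness. Summing over $k=0,1$ yields $\|g-g_j\|_{\mathcal{M}^p_q}\to 0$.

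The main obstacle I anticipate is the bookkeeping in the second paragraph's estimate: one does not directly control $f-f_j$ in $\mathcal{M}^p_q$ (indeed the whole point is that $f\notin\overline{V_\infty\mathcal{M}^p_q}^{\mathcal{M}^p_q}$ in general), so the truncation to the annulus $A$ must be used decisively to "upgrade" a two-space bound to the intermediate-space bound. The quantitative lemma that makes this work is the elementary inequality
\[
m(\chi_{\{a\le|f|\le b\}}h,p,q;r)
\lesssim_{a,b}
m(h,p_0,q_0;r)^{1-\theta}\,m(h,p_1,q_1;r)^{\theta}
+(\text{terms forced to be small by the annulus constraint}),
\]
whose proof is a pointwise case analysis on a fixed ball $B(x,r)$ splitting according to the size of $h$ relative to $a,b$. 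Once this is in hand, combining with $\|f-f_j\|_{\mathcal{M}^{p_0}_{q_0}+\mathcal{M}^{p_1}_{q_1}}\to 0$ and the closedness of $V_\infty\mathcal{M}^p_q$ in $\mathcal{M}^p_q$ finishes the argument. The $V^{(*)}$ and $V_0$ analogues, if needed elsewhere, follow the same template using Remark \ref{rem:171220-1}.
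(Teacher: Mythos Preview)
Your overall strategy---approximate $f$ by $f_j\in V_\infty\mathcal{M}^p_q$, restrict to the annulus $A=\{a\le|f|\le b\}$, and use that $V_\infty\mathcal{M}^p_q$ is closed in $\mathcal{M}^p_q$---matches the paper's. The gap is in your core estimate. You decompose $f-f_j=h^{(0)}+h^{(1)}$ with $h^{(k)}\in\mathcal{M}^{p_k}_{q_k}$ and assert that on $A\cap\{|f_j|\le 2b\}$ one has $|\chi_A h^{(k)}|\lesssim_{a,b}1$. This is false: on that set $|h^{(0)}+h^{(1)}|=|f-f_j|\le 3b$, but the \emph{individual} summands $h^{(0)},h^{(1)}$ can be arbitrarily large with cancellation---the sum-space decomposition carries no pointwise control on each piece. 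Without that bound your H\"older step collapses (for instance $\|\chi_A h^{(0)}\|_{\mathcal{M}^{p_1}_{q_1}}$ need not be finite), and your proposed ``quantitative lemma'' likewise invokes $m(h,p_k,q_k;r)$ for $h=f-f_j$ simultaneously in $k=0,1$, which is unavailable since $h$ lies only in $\mathcal{M}^{p_0}_{q_0}+\mathcal{M}^{p_1}_{q_1}$.

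The paper's remedy is not to compare $\chi_A f$ with $\chi_A f_j$ directly, but to introduce a bounded Lipschitz function $\Theta:[0,\infty)\to[0,\infty)$ with $\Theta\equiv a$ on $[a,b]$ and $\Theta(t)\le t$, and to show instead that $\chi_A\Theta(|f|)\in V_\infty\mathcal{M}^p_q$ (this suffices by the lattice property, since $\chi_A|f|\le(b/a)\,\chi_A\Theta(|f|)$). The point of $\Theta$ is that
\[
|\Theta(|f|)-\Theta(|f_j|)|\lesssim\min(1,|f-f_j|)\le\min(1,|h^{(0)}|)+\min(1,|h^{(1)}|),
\]
and now each summand \emph{is} pointwise bounded by $1$. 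This is exactly what makes the interpolation go through: for the piece with $q_1<q$ one uses $\min(1,|h^{(1)}|)\le|h^{(1)}|^{q_1/q}$, giving $\|\min(1,|h^{(1)}|)\|_{\mathcal{M}^p_q}\le\|h^{(1)}\|_{\mathcal{M}^{p_1}_{q_1}}^{q_1/q}$; for the piece with $q_0>q$ one applies H\"older and bounds the now-finite factor $\|\chi_A\min(1,|h^{(0)}|)\|_{\mathcal{M}^{p_1}_{q_1}}\le\|\chi_A\|_{\mathcal{M}^{p_1}_{q_1}}\lesssim_a\|f\|_{\mathcal{M}^p_q}^{q/q_1}$. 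The Lipschitz truncation is the missing idea in your plan.
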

\begin{proof}
We may assume that $q_0>q_1$.
Then $q_0>q>q_1$.  
Let $f\in \mathcal{M}^p_q
\cap 
\overline{V_\infty\mathcal{M}^p_q}^{\mathcal{M}^{p_0}_{q_0}+\mathcal{M}^{p_1}_{q_1}}
$. We shall show that, for every $0<a<b<\infty$ 
\begin{align}\label{eq:171221-7}
\chi_{\{a\le |f|\le b\}}f \in V_\infty\mathcal{M}^p_q.
\end{align}
In view of Lemma \ref{lem:171220-1}, we can prove \eqref{eq:171221-7} by showing that
\begin{align}\label{eq:171221-8}
\chi_{\{a\le |f|\le b\}}\Theta(|f|) \in V_\infty\mathcal{M}^p_q,
\end{align}
where $\Theta:[0,\infty) \to [0,\infty)$ is defined by 
\begin{align*}
\Theta(t):=
\begin{cases}
0, &\quad 0\le t< \frac{a}{2} \  {\rm or}\ t>2b,
\\
2t-a, &\quad \frac{a}{2}<t\le a,
\\
a, &\quad a\le t\le b, 
\\
-\frac{a}{b}t+2a, &\quad b<t\le 2b.
\end{cases}
\end{align*}
Since $f\in 
\overline{V_\infty\mathcal{M}^p_q}^{\mathcal{M}^{p_0}_{q_0}+\mathcal{M}^{p_1}_{q_1}}$, 
we can choose 
$\{f_j\}_{j=1}^\infty \subseteq V_\infty\mathcal{M}^p_q$, 
$\{g_j\}_{j=1}^\infty \subseteq \mathcal{M}^{p_0}_{q_0}$,
and
$\{h_j\}_{j=1}^\infty \subseteq \mathcal{M}^{p_1}_{q_1}$ such that $f=f_j+g_j+h_j$, 
\begin{align}\label{eq:171221-10}
\lim_{j\to \infty}
\|g_j\|_{\mathcal{M}^{p_0}_{q_0}}
=0
\ 
{\rm and}
\ 
\lim_{j\to \infty}
\|h_j\|_{\mathcal{M}^{p_1}_{q_1}}
=0.
\end{align}
Note that, by virtue of Lemma \ref{lem:171220-1} and the inequality
\[
\chi_{\{a\le |f|\le b\}}
\Theta(|f_j|)
\le |f_j|,
\]
we have
$
\chi_{\{a\le |f|\le b\}}
\Theta(|f_j|)
\in V_\infty\mathcal{M}^p_q.
$ 
Therefore, \eqref{eq:171221-8} is valid once we can show that 
\begin{align}\label{eq:171221-9}
\lim_{j\to \infty}
\|\chi_{\{a\le |f|\le b\}}
(\Theta(|f_j|)-\Theta(|f|)\|_{\mathcal{M}^p_q}
=0
\end{align} 
Since 
\[
|\Theta(t_1)-\Theta(t_2)|
\lesssim
\min(1, |t_1-t_2|),
\]
for every $t_1, t_2\ge 0$, we have
\begin{align}\label{eq:171221-12}
\|\chi_{\{a\le |f|\le b\}}
(\Theta(|f_j|)-\Theta(|f|)\|_{\mathcal{M}^p_q}
&\lesssim
\|\chi_{\{a\le |f|\le b\}}
\min(1, |g_j|)\|_{\mathcal{M}^p_q}
\nonumber
\\
&\quad+
\|\min(1,|h_j|)\|_{\mathcal{M}^p_q}.
\end{align}
By using $q>q_1$, we have
\begin{align}\label{eq:171221-17}
\|\min(1,|h_j|)\|_{\mathcal{M}^p_q}
\le \||h_j|^{q_1/q}\|_{\mathcal{M}^p_q}
=
\|h_j\|_{\mathcal{M}^{p_1}_{q_1}}^{q_1/q}.
\end{align}
Meanwhile, by using the H\"older inequality, we get 
\begin{align}\label{eq:171221-18}
\|\chi_{\{a\le |f|\le b\}}
\min(1, |g_j|)\|_{\mathcal{M}^p_q}
&\le 
\|g_j\|_{\mathcal{M}^{p_0}_{q_0}}^{1-\theta}
\|\chi_{\{a\le |f|\le b\}}
\min(1, |g_j|)\|_{\mathcal{M}^{p_1}_{q_1}}^{\theta}
\nonumber
\\
&\lesssim
\|g_j\|_{\mathcal{M}^{p_0}_{q_0}}^{1-\theta}
\|f\|_{\mathcal{M}^{p}_{q}}^{\frac{\theta q}{q_1}}.
\end{align}
Combining \eqref{eq:171221-10}, 
\eqref{eq:171221-12}, \eqref{eq:171221-17}, 
and 
\eqref{eq:171221-18}, we obtain 
\eqref{eq:171221-9}. 
\end{proof}

We are now ready to prove Theorem \ref{thm:171213-2}. 
\begin{proof}[Proof of Theorem \ref{thm:171213-2}]
By virtue of Theorem \ref{thm:HS2-2} and Lemma \ref{lem:171223-1}, we have 
\[
\mathcal{M}^p_q
\subseteq
[V_0\mathcal{M}^{p_0}_{q_0}, V_0\mathcal{M}^{p_1}_{q_1}]^\theta. 
\]
Combining this, $[V_0\mathcal{M}^{p_0}_{q_0}, V_0\mathcal{M}^{p_1}_{q_1}]^\theta
\subseteq [\mathcal{M}^{p_0}_{q_0}, \mathcal{M}^{p_1}_{q_1}]^\theta$, and  Theorem \ref{thm:Lemarie}, we have
\eqref{eq:thm:171213-2-1}.
Next, we only show \eqref{eq:thm:171213-2-2} because we can prove 
\eqref{eq:thm:171213-2-3} by a similar argument.  
Let $f\in [V_\infty\mathcal{M}^{p_0}_{q_0}, V_\infty\mathcal{M}^{p_1}_{q_1}]^\theta$. 
Then there exists $G\in \mathcal{G}(V_\infty\mathcal{M}^{p_0}_{q_0}, V_\infty\mathcal{M}^{p_1}_{q_1})$ such that 
\begin{align*} 
f=G'(\theta).
\end{align*}
Consequently, 
\begin{align}\label{eq:171221-15}
\lim_{k\to \infty}
\|f-H_k(\theta)\|_{\mathcal{M}^{p_0}_{q_0}+\mathcal{M}^{p_1}_{q_1}}
=0,
\end{align}
where $H_k$ is defined in Lemma \ref{lem:160417-3}. Combining \eqref{eq:171221-15} 
with 
the second part of
Theorem \ref{thm:171213-1}, Lemma \ref{lem:160417-3} and 
$[V_\infty\mathcal{M}^{p_0}_{q_0}, V_\infty\mathcal{M}^{p_1}_{q_1}]^\theta \subseteq \mathcal{M}^p_q$, we have
\[
f\in \mathcal{M}^p_q 
\cap \overline{V_\infty\mathcal{M}^p_q}^{\mathcal{M}^{p_0}_{q_0}+\mathcal{M}^{p_1}_{q_1}}.
\]
Therefore, by virtue of Lemma \ref{lem:171220-2}, we have $f\in \mathcal{M}^p_q$ and $\chi_{\{a\le|f|\le b\}}f\in V_\infty\mathcal{M}^p_q$ for every $0<a<b<\infty$. 
We now show that 
\begin{align}\label{eq:171220-19}
\bigcap_{0<a<b<\infty}
\{f\in \mathcal{M}^p_q:
\chi_{\{a\le |f|\le b\}}\in V_\infty\mathcal{M}^p_q\}
\subseteq 
[V_\infty\mathcal{M}^{p_0}_{q_0},
V_\infty\mathcal{M}^{p_1}_{q_1}]^\theta.
\end{align}
Suppose that $f$ belongs to the set in the left-hand side of \eqref{eq:171220-19}. 
Let $F$ and $G$ be defined by \eqref{eq:171220-20} and \eqref{eq:171220-210}, respectively. In view of Proposition \ref{prop:Lemarie-HS}, we only need to show that
\begin{align}\label{eq:171220-23}
G(z)\in V_\infty\mathcal{M}^{p_0}_{q_0}
+V_\infty\mathcal{M}^{p_1}_{q_1} \quad (z\in \overline{S})
\end{align}
and 
\begin{align}\label{eq:171220-21}
G(k+it)-G(k)\in V_\infty\mathcal{M}^{p_k}_{q_k}
\quad 
(k\in \{0,1\}, t\in \mathbb{R}).
\end{align}
The proof of \eqref{eq:171220-23} goes as follows. 
Define $G_0(z):=\chi_{\{|f| \le 1\}}G(z)$ and 
$G_1(z):=G(z)-G_0(z)$. For every $\varepsilon\in (0,1)$, we write $G_\varepsilon(z):=\chi_{\{|f|\ge \varepsilon\}}G_0(z)$. Then, by \eqref{eq:171220-27}, we have
\[
|G_\varepsilon(z)|
\le 2(1+|z|)
\chi_{\{\varepsilon\le |f|\le 1\}}
\le 
\frac{2(1+|z|)}{\varepsilon}
\chi_{\{\varepsilon\le |f|\le 1\}}|f|.
\]
Since $\chi_{\{\varepsilon\le |f|\le 1\}}f\in V_\infty\mathcal{M}^{p_0}_{q_0}$, 
by virtue of Lemma \ref{lem:171220-1}, we have
$G_\varepsilon(z)\in V_\infty\mathcal{M}^{p_0}_{q_0}$. Meanwhile, 
\begin{align*}
|G_0(z)-G_\varepsilon(z)|
=
\left|
\chi_{\{|f|<\varepsilon\}}
\frac{F(z)-F(\theta)}{\left(\frac{p}{p_1}-\frac{p}{p_0}\right) \log |f|}
\right|
\lesssim
\frac{|f|^{\frac{p}{p_0}}}{-\log \varepsilon}.
\end{align*}
This implies
\[
\|G_0(z)-G_\varepsilon(z)\|_{\mathcal{M}^{p_0}_{q_0}}
\lesssim
\frac{\|f\|_{\mathcal{M}^p_q}^{\frac{p}{p_0}}}{-\log \varepsilon}.
\]
Therefore, $\lim\limits_{\varepsilon\to 0^{+}} \|G_0(z)-G_\varepsilon(z)\|_{\mathcal{M}^{p_0}_{q_0}}=0$. Consequently, $G_0(z)\in V_\infty\mathcal{M}^{p_0}_{q_0}$. By a similar argument, we also have $G_1(z)\in V_\infty\mathcal{M}^{p_1}_{q_1}$. 
Since $G(z)=G_0(z)+G_1(z)$, we obtain \eqref{eq:171220-23}. 
We now prove \eqref{eq:171220-21}. 
For every $N\in \mathbb{N}$, we define
\[
H_N(k+it)
:=(G(k+it)-G(k))\chi_{\{N^{-1}\le |f|\le N\}}.
\]
It follows from \eqref{eq:171220-27} that
\[
|H_N(k+it)|
\le C_{k, N, t} \chi_{\{N^{-1}\le |f|\le N\}}|f|.
\]
Therefore, by virtue of Lemma \ref{lem:171220-1}, we have 
$H_N(k+it)\in V_\infty\mathcal{M}^{p_k}_{q_k}$. 
Moreover, 
\begin{align*}
\|G(k+it)-G(k)-H_N(k+it)\|_{\mathcal{M}^{p_k}_{q_k}}
&\lesssim
\|
\chi_{\{|f|<N^{-1}\}\cup \{|f|>N\}}
\frac{|F(k+it)|+|F(k)|}{|\log |f||}
\|_{\mathcal{M}^{p_k}_{q_k}}
\\
&\lesssim
\frac{\left\||f|^{\frac{p}{p_k}}\right\|_{\mathcal{M}^{p_k}_{q_k}}}{\log N}
=
\frac{\|f\|_{\mathcal{M}^{p}_{q}}^{\frac{p}{p_k}}}{\log N}.
\end{align*}
Consequently, 
$
\lim\limits_{N\to \infty}
\|G(k+it)-G(k)-H_N(k+it)\|_{\mathcal{M}^{p_k}_{q_k}}=0$.
Combining this and $H_N(k+it)\in V_\infty\mathcal{M}^{p_k}_{q_k}$, we obtain \eqref{eq:171220-21}, as desired. 
\end{proof}

\section{Complex interpolation of $(\mathbb{M}^{p_0}_{q_0}, \mathbb{M}^{p_1}_{q_1})$}
\begin{theorem}\label{thm:171218-1}
Let $1 \le q \le p<\infty$. Then 
$\mathbb{M}^p_q=\overset{\diamond}{\mathcal M}{}^p_q$. 
\end{theorem}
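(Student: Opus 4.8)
The plan is to prove the two inclusions $\mathbb{M}^p_q\subseteq\overset{\diamond}{\mathcal M}{}^p_q$ and $\overset{\diamond}{\mathcal M}{}^p_q\subseteq\mathbb{M}^p_q$ separately. For the inclusion $\overset{\diamond}{\mathcal M}{}^p_q\subseteq\mathbb{M}^p_q$, first I would check that every function $f$ in the ``generating set'' of the diamond space — i.e.\ $f$ with $\partial^\alpha f\in\mathcal M^p_q$ for all $\alpha\in\mathbb N_0{}^n$ — satisfies $\lim_{|y|\to0}f(\cdot+y)=f$ in $\mathcal M^p_q$. This is a mean-value-theorem estimate: write $f(x+y)-f(x)=\int_0^1 y\cdot\nabla f(x+ty)\,dt$, take $\mathcal M^p_q$ norms, use the translation invariance of $\|\cdot\|_{\mathcal M^p_q}$ (so $\|\nabla f(\cdot+ty)\|_{\mathcal M^p_q}=\|\nabla f\|_{\mathcal M^p_q}$), and conclude $\|f(\cdot+y)-f\|_{\mathcal M^p_q}\lesssim |y|\,\|\nabla f\|_{\mathcal M^p_q}\to0$. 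Then, since $\mathbb{M}^p_q$ is a \emph{closed} subspace of $\mathcal M^p_q$ (the set of functions for which translation is continuous at $0$ is closed: a uniform-limit argument using $\|f(\cdot+y)-f\|\le\|f-g\|+\|g(\cdot+y)-g\|+\|g(\cdot+y)-f(\cdot+y)\|=2\|f-g\|+\|g(\cdot+y)-g\|$), the closure of the generating set lies in $\mathbb{M}^p_q$, which is exactly $\overset{\diamond}{\mathcal M}{}^p_q\subseteq\mathbb{M}^p_q$.

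For the reverse inclusion $\mathbb{M}^p_q\subseteq\overset{\diamond}{\mathcal M}{}^p_q$, the natural approach is mollification: given $f\in\mathbb{M}^p_q$, pick a standard mollifier $\phi_\varepsilon$ and consider $f*\phi_\varepsilon$. I would show (i) $f*\phi_\varepsilon\to f$ in $\mathcal M^p_q$ as $\varepsilon\to0$, and (ii) $f*\phi_\varepsilon$ belongs to the generating set of $\overset{\diamond}{\mathcal M}{}^p_q$, i.e.\ $\partial^\alpha(f*\phi_\varepsilon)\in\mathcal M^p_q$ for every $\alpha$. For (ii), $\partial^\alpha(f*\phi_\varepsilon)=f*(\partial^\alpha\phi_\varepsilon)$, and by Minkowski's integral inequality $\|f*\psi\|_{\mathcal M^p_q}\le\|f\|_{\mathcal M^p_q}\|\psi\|_{L^1}$ (again using translation invariance of the Morrey norm), so each derivative is in $\mathcal M^p_q$. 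For (i), write $(f*\phi_\varepsilon)(x)-f(x)=\int(f(x-y)-f(x))\phi_\varepsilon(y)\,dy$ and apply Minkowski again to get $\|f*\phi_\varepsilon-f\|_{\mathcal M^p_q}\le\int_{|y|\le\varepsilon}\|f(\cdot-y)-f\|_{\mathcal M^p_q}\phi_\varepsilon(y)\,dy\le\sup_{|y|\le\varepsilon}\|f(\cdot-y)-f\|_{\mathcal M^p_q}$, which tends to $0$ precisely because $f\in\mathbb{M}^p_q$. Hence $f$ is approximated in $\mathcal M^p_q$ by elements of the generating set, so $f\in\overset{\diamond}{\mathcal M}{}^p_q$.

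The main obstacle I anticipate is justifying the translation-invariance and convolution inequalities for the Morrey norm cleanly — these are standard but need care: the identity $\|f(\cdot+y)\|_{\mathcal M^p_q}=\|f\|_{\mathcal M^p_q}$ follows since the supremum defining $\|\cdot\|_{\mathcal M^p_q}$ ranges over all balls $B(x,r)$ and translating $f$ merely reindexes the centers, while Minkowski's integral inequality for $\|\cdot\|_{\mathcal M^p_q}$ requires knowing that $\mathcal M^p_q$ is a Banach function space in which the triangle inequality upgrades to the continuous (integral) form — this holds because $\mathcal M^p_q$ is a Banach space with the Fatou property. A secondary subtlety is making sure that the generating set of $\overset{\diamond}{\mathcal M}{}^p_q$ as used in the excerpt is indeed $\{f:\partial^\alpha f\in\mathcal M^p_q\ \forall\alpha\}$ and that $f*\phi_\varepsilon$ is smooth with \emph{all} derivatives in $\mathcal M^p_q$, which the convolution estimate handles uniformly in $\alpha$. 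Once these two technical lemmas are in place, the double-inclusion argument is short, and the corollary about complex interpolation of $\mathbb{M}^p_q$ follows immediately from Theorem \ref{thm:HNS}.
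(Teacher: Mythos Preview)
Your proposal is correct and follows essentially the same route as the paper: the mean-value-theorem bound $\|f(\cdot+y)-f\|_{\mathcal M^p_q}\le |y|\,\|\nabla f\|_{\mathcal M^p_q}$ on the generating set combined with the closedness of $\mathbb{M}^p_q$ gives $\overset{\diamond}{\mathcal M}{}^p_q\subseteq\mathbb{M}^p_q$, and mollification together with the Minkowski estimate $\|f*\phi_\varepsilon-f\|_{\mathcal M^p_q}\le\sup_{|y|\le\varepsilon}\|f(\cdot-y)-f\|_{\mathcal M^p_q}$ gives the reverse inclusion. The paper presents the first direction as a single chained inequality rather than invoking closedness abstractly, but the content is identical.
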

\begin{proof}
Let $f \in \overset{\diamond}{\mathcal M}{}^p_q$.
Then there exists
$\{f_j\}_{j=1}^\infty \subset {\mathcal M}^p_q$
such that
$\partial^\alpha f_j\in {\mathcal M}^p_q$
for all 
$\alpha \in {\mathbb N}_0{}^n$ and $j \in {\mathbb N}$
and that
$\displaystyle
\lim_{j \to \infty}f_j=f
$
in the topology of ${\mathcal M}^p_q$.
Let $y \in {\mathbb R}^n$.
We observe
\begin{align*}
\|f(\cdot+y)-f\|_{{\mathcal M}^p_q}
&\le
\|f_j(\cdot+y)-f_j\|_{{\mathcal M}^p_q}
+
\|f(\cdot+y)-f_j(\cdot+y)\|_{{\mathcal M}^p_q}
+
\|f-f_j\|_{{\mathcal M}^p_q}\\
&\le
\|f_j(\cdot+y)-f_j\|_{{\mathcal M}^p_q}
+2
\|f-f_j\|_{{\mathcal M}^p_q}.
\end{align*}
We note that
each $f_j$ is smooth in view of the fact that
$f \in {\rm BUC}$
whenever
$\partial^\alpha f \in {\mathcal M}^p_q$
for all $\alpha$ such that $|\alpha| \le \dfrac{n}{p}+1$.
So,
by the mean value theorem,
\begin{align*}
\|f_j(\cdot+y)-f_j\|_{{\mathcal M}^p_q}
&=
\left\|
\int_0^1 y \cdot \nabla f_j(\cdot+t y)\,dt
\right\|_{{\mathcal M}^p_q}\\
&\le
\int_0^1 |y| \cdot \|\nabla f_j(\cdot+t y)\|_{{\mathcal M}^p_q}\,dt\\
&=|y| \cdot \|\nabla f_j\|_{{\mathcal M}^p_q}.
\end{align*}
Thus,
\begin{align*}
\|f(\cdot+y)-f\|_{{\mathcal M}^p_q}
&\le
|y| \cdot \|\nabla f_j\|_{{\mathcal M}^p_q}
+2
\|f-f_j\|_{{\mathcal M}^p_q}.
\end{align*}
If we let $y \to 0$,
then we obtain
\[\limsup_{y \to 0}
\|f(\cdot+y)-f\|_{{\mathcal M}^p_q}
\le
2\|f-f_j\|_{{\mathcal M}^p_q}.
\]
It remains to let $j \to \infty$.

Conversely let $f \in {\mathbb M}^p_q$.
Choose a non-negative function $\rho \in C^\infty(\{|y|<1\})$
with $\|\rho\|_{L^1}=1$.
Set $\rho_j=j^n \rho(j \cdot)$
for $j \in {\mathbb N}$.
We set
$f_j=\rho_j*f$.
Then we have
\[
\|f-f_j\|_{{\mathcal M}^p_q}
\le
\int_{{\mathbb R}^n}\rho_j(y)\|f-f(\cdot-y)\|_{{\mathcal M}^p_q}\,dy
\le
\sup_{|y| \le j^{-1}}\|f-f(\cdot-y)\|_{{\mathcal M}^p_q}.
\] 
As a result, letting $j \to \infty$,
we obtain
$\displaystyle
\lim_{j \to \infty}f_j=f
$
in the topology of ${\mathcal M}^p_q$.
Since $\partial^\alpha f_j=(\partial^\alpha \rho_j)*f \in {\mathcal M}^p_q$,
we obtain 
$f \in \overset{\diamond}{\mathcal M}{}^p_q$.
\end{proof}
As a corollary of Theorems \ref{thm:HNS} and \ref{thm:171218-1}, we have the following result.
\begin{corollary}
Keep the same assumption as in Theorem \ref{thm:HNS}. Then 
\begin{align*}
[{\mathbb M}^{p_0}_{q_0},
{\mathbb M}^{p_1}_{q_1}]_\theta
=
\{f\in
{\mathbb M}^{p}_{q}
: \lim_{N\to \infty}
\|f-\chi_{\{1/N\le |f|\le N\}}f\|_{\cM^p_q}=0
\}
\end{align*}
and
\begin{align*}
[{\mathbb M}^{p_0}_{q_0},
{\mathbb M}^{p_1}_{q_1}]^\theta
=
\bigcap_{0<a<1}
\left\{
f \in {\mathcal M}^p_q
\,:\,
\lim_{J \to \infty}
\|S(f;a,J)\|_{{\mathcal M}^p_q}
=0
\right\}.
\end{align*}

\end{corollary}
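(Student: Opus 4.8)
The plan is to deduce the corollary directly from Theorem \ref{thm:171218-1} and Theorem \ref{thm:HNS}, with essentially no additional analytic work; the entire content is a substitution of equal Banach spaces into the two complex interpolation functors.

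First I would record that Theorem \ref{thm:171218-1} yields the three identities $\mathbb{M}^{p_0}_{q_0}=\overset{\diamond}{\mathcal M}{}^{p_0}_{q_0}$, $\mathbb{M}^{p_1}_{q_1}=\overset{\diamond}{\mathcal M}{}^{p_1}_{q_1}$, and $\mathbb{M}^p_q=\overset{\diamond}{\mathcal M}{}^p_q$. The key observation is that each of these is an equality not merely of sets but of Banach spaces: both $\mathbb{M}^{p_k}_{q_k}$ and $\overset{\diamond}{\mathcal M}{}^{p_k}_{q_k}$ are defined as (closed) subspaces of $\mathcal{M}^{p_k}_{q_k}$ carrying the norm inherited from $\mathcal{M}^{p_k}_{q_k}$, so once the underlying sets coincide the norms coincide automatically. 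Consequently the compatible couple $(\mathbb{M}^{p_0}_{q_0},\mathbb{M}^{p_1}_{q_1})$ is literally the same compatible couple as $(\overset{\diamond}{\mathcal M}{}^{p_0}_{q_0},\overset{\diamond}{\mathcal M}{}^{p_1}_{q_1})$, both realized inside the ambient space $\mathcal{M}^{p_0}_{q_0}+\mathcal{M}^{p_1}_{q_1}$.

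Next I would invoke that the functors $[\cdot,\cdot]_\theta$ and $[\cdot,\cdot]^\theta$ depend only on the compatible couple of Banach spaces, so that equal couples produce equal interpolation spaces. This gives $[\mathbb{M}^{p_0}_{q_0},\mathbb{M}^{p_1}_{q_1}]_\theta=[\overset{\diamond}{\mathcal M}{}^{p_0}_{q_0},\overset{\diamond}{\mathcal M}{}^{p_1}_{q_1}]_\theta$ together with the analogous identity for the second functor. Applying Theorem \ref{thm:HNS} to evaluate the right-hand sides then determines both interpolation spaces explicitly. Finally, using $\mathbb{M}^p_q=\overset{\diamond}{\mathcal M}{}^p_q$ once more to rewrite the description of the first interpolation space in terms of $\mathbb{M}^p_q$, and observing that the description of the second interpolation space in Theorem \ref{thm:HNS} is already phrased purely in terms of $\mathcal{M}^p_q$ and $S(f;a,J)$, I recover exactly the two asserted formulas.

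The only point requiring care, and the main (if modest) obstacle, is the verification in the first step that the identification supplied by Theorem \ref{thm:171218-1} respects the Banach space structure and the compatibility, so that substitution into the interpolation functors is legitimate rather than merely a set-theoretic coincidence. Once this is granted the corollary is immediate.
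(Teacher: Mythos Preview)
Your proposal is correct and matches the paper's approach exactly: the paper offers no argument beyond stating that the result is a corollary of Theorems~\ref{thm:HNS} and~\ref{thm:171218-1}, which is precisely the substitution you carry out. Your observation that the identification $\mathbb{M}^{p_k}_{q_k}=\overset{\diamond}{\mathcal M}{}^{p_k}_{q_k}$ is an equality of normed spaces (both carrying the inherited $\mathcal{M}^{p_k}_{q_k}$ norm), and hence passes to the interpolation functors, is the only thing to check and is immediate from the definitions.
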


\section{Examples}
In this section, we shall examine the relation between each subspace in Theorems 
\ref{thm:171213-1} and \ref{thm:171213-2}  and comparing them by giving several examples. 
Let $\theta \in (0,1)$ and assume that 
\begin{align}\label{eq:171227-8}
1\le q_0<p_0<\infty, \ 1\le q_1<p_1<\infty, \ {\rm and} \  \frac{p_0}{q_0}=\frac{p_1}{q_1}.
\end{align}
Let $p$ and $q$ be defined by \eqref{eq:pq}.
Define 
$$
E(p,q):=\{(y_k+(R-1)(a_1+R a_2+\cdots))_{k=1, \ldots, n}\,:\,
\{a_j\}_{j=1}^\infty \in \{0,1\}^{\infty} \cap \ell^1({\mathbb N}), y\in [0,1]^n\},
$$
where $R>2$ solves $R^{\frac{n}{p}-\frac{n}{q}}2^{\frac{n}{q}}=1$,
so that each connected component of $E(p,q)$ is a closed cube with volume $1$.
It is known that $\chi_{E(p,q)} \in {\mathcal M}^p_q({\mathbb R}^n)$;
see \cite{SST11-1}.
We also define 
\[
E_m:=\bigcup_{j=0}^{m-1} [j, j+m^{-nq/p}]\times [0,m]^{n-1}
\]
for $m\in \mathbb{N}$.
For $x \in {\mathbb R}^n$,
we let
\begin{align*}
f_1(x)&:=\chi_{E(p,q)}(x)\\
f_2(x)&:=\sum_{j=1}^\infty \chi_{j!{\bf e}_1+[0,1]^n}(x)\\
f_3(x)&:=|x|^{-n/p}(x)\\
f_4(x)&:=\sum_{m=1}^\infty \chi_{E_m}(x-m!e_1).
\end{align*}
We have the following list of the membership:
\[
\begin{tabular}{|
@{\hspace{5pt}}c@{\hspace{5pt}}||%
@{\hspace{5pt}}c@{\hspace{5pt}}|%
@{\hspace{5pt}}c@{\hspace{5pt}}|%
@{\hspace{5pt}}c@{\hspace{5pt}}|%
@{\hspace{5pt}}c@{\hspace{5pt}}|%
}
\hline
 & $f_1$ & $f_2$ & $f_3$ & $f_4$\\ 
\hline
$[V_0{\mathcal M}^{p_0}_{q_0},V_0{\mathcal M}^{p_1}_{q_1}]_\theta=
[{\mathcal M}^{p_0}_{q_0},{\mathcal M}^{p_1}_{q_1}]_\theta$&$\circ$&$\circ$&$\times$&$\circ$
\\
\hline
$[V_\infty{\mathcal M}^{p_0}_{q_0},V_\infty{\mathcal M}^{p_1}_{q_1}]_\theta$&$\times$&$\circ$&$\times$&$\times$
\\ 
\hline
$[V^{(*)}{\mathcal M}^{p_0}_{q_0},V^{(*)}{\mathcal M}^{p_1}_{q_1}]_\theta$&$\times$&$\times$&$\times$&$\circ$\\
\hline
$[V_0{\mathcal M}^{p_0}_{q_0},V_0{\mathcal M}^{p_1}_{q_1}]^\theta={\mathcal M}^p_q$&$\circ$&$\circ$&$\circ$&$\circ$\\
\hline
$[V_\infty{\mathcal M}^{p_0}_{q_0},V_\infty{\mathcal M}^{p_1}_{q_1}]^\theta$&$\times$&$\circ$&$\circ$&$\times$\\ 
\hline
$[V^{(*)}{\mathcal M}^{p_0}_{q_0},V^{(*)}{\mathcal M}^{p_1}_{q_1}]^\theta$&$\times$&$\times$&$\circ$&$\circ$\\
\hline
\end{tabular}
\]
In the table,
$\circ$ stands for the membership,
while
$\times$ means that the function does not belong to the function space. The detail verification of this table is given as follows. 
\begin{corollary}\label{cor:171226-1}
Let $\theta \in (0,1)$ and assume \eqref{eq:171227-8}. 
Then, 
$f_1$ belongs to 
$[V_0\mathcal{M}^{p_0}_{q_0}, V_0\mathcal{M}^{p_1}_{q_1}]_\theta$, but 
$$f_1 \notin [V^{(*)}\mathcal{M}^{p_0}_{q_0}, V^{(*)}\mathcal{M}^{p_1}_{q_1}]_\theta \cup [V_\infty\mathcal{M}^{p_0}_{q_0}, V_\infty\mathcal{M}^{p_1}_{q_1}]_\theta.$$
\end{corollary}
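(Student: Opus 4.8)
The plan is to verify the membership of $f_1 = \chi_{E(p,q)}$ by combining the structural descriptions in Theorem \ref{thm:171213-1} with concrete estimates on $m(f_1, P, Q; r)$, exploiting the fact that $E(p,q)$ is a disjoint union of unit cubes whose spatial spread is controlled by the geometric parameter $R$.

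\textbf{Step 1: $f_1 \in [V_0\mathcal{M}^{p_0}_{q_0}, V_0\mathcal{M}^{p_1}_{q_1}]_\theta = [\mathcal{M}^{p_0}_{q_0}, \mathcal{M}^{p_1}_{q_1}]_\theta$.} By \eqref{eq:171213-2} it suffices to show $f_1 \in \mathcal{M}^p_q$ and $\lim_{N\to\infty}\|f_1 - \chi_{\{1/N \le |f_1| \le N\}}f_1\|_{\mathcal{M}^p_q} = 0$. The first is quoted from \cite{SST11-1}. For the second, since $f_1$ only takes the values $0$ and $1$, we have $\chi_{\{1/N \le |f_1| \le N\}}f_1 = f_1$ for every $N \ge 1$, so the difference is identically zero. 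This is the easy inclusion.

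\textbf{Step 2: $f_1 \notin [V_\infty\mathcal{M}^{p_0}_{q_0}, V_\infty\mathcal{M}^{p_1}_{q_1}]_\theta$.} By \eqref{eq:171213-3}, this interpolation space is contained in $V_\infty\mathcal{M}^p_q$, so it is enough to show $f_1 \notin V_\infty\mathcal{M}^p_q$, i.e. $\limsup_{r\to\infty} m(f_1, p, q; r) > 0$. The key point is the self-similar construction of $E(p,q)$: the parameter $R$ was chosen precisely so that $R^{n/p - n/q}2^{n/q} = 1$, which makes $m(\chi_{E(p,q)}, p, q; R^k)$ essentially constant in $k$. Concretely, a ball of radius comparable to $R^k$ centered appropriately captures about $2^k$ of the unit cubes forming $E(p,q)$ (one cube at each of the first $k$ "scales" of the binary expansion), giving
\[
m(f_1, p, q; c R^k) \gtrsim (R^k)^{n/p}\left(\frac{2^k}{(R^k)^n}\right)^{1/q} = R^{k(n/p - n/q)} 2^{k/q} = (R^{n/p - n/q} 2^{n/q})^{k/q}\cdot 1 = 1,
\]
which does not tend to $0$ as $k \to \infty$. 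Hence $f_1 \notin V_\infty\mathcal{M}^p_q$.

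\textbf{Step 3: $f_1 \notin [V^{(*)}\mathcal{M}^{p_0}_{q_0}, V^{(*)}\mathcal{M}^{p_1}_{q_1}]_\theta$.} By \eqref{eq:171213-4} this space lies in $V^{(*)}\mathcal{M}^p_q$, so it suffices to show $f_1 \notin V^{(*)}\mathcal{M}^p_q$, i.e. $\limsup_{N\to\infty} \sup_{x} \int_{B(x,1)} |f_1(y)|^q \chi_{\mathbb{R}^n\setminus B(0,N)}(y)\,dy > 0$. But $E(p,q)$ contains unit cubes arbitrarily far from the origin (the term $(R-1)(a_1 + Ra_2 + \cdots)$ ranges over an unbounded set as the finitely-supported binary sequences vary), so for every $N$ one can find a ball $B(x,1)$ outside $B(0,N)$ that contains a fixed positive fraction of a unit cube of $E(p,q)$; the integral is then bounded below by a constant independent of $N$. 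Thus $f_1 \notin V^{(*)}\mathcal{M}^p_q$.

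\textbf{Main obstacle.} The only nonroutine point is Step 2: one must extract from the definition of $E(p,q)$ the precise counting estimate that a ball of radius $\sim R^k$ meets $\gtrsim 2^k$ of the unit cubes, and check that the scaling exponents cancel exactly because of the defining relation $R^{n/p-n/q}2^{n/q}=1$. Everything else follows formally from the descriptions \eqref{eq:171213-2}--\eqref{eq:171213-4} together with the trivial observation that $f_1$ is $\{0,1\}$-valued and supported on a set reaching out to infinity.
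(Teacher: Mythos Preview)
Your overall strategy coincides with the paper's: invoke the descriptions in Theorem~\ref{thm:171213-1} and verify directly that $f_1\in\mathcal{M}^p_q$ with trivial truncation (Step~1), that $f_1\notin V_\infty\mathcal{M}^p_q$ (Step~2), and that $f_1\notin V^{(*)}\mathcal{M}^p_q$ (Step~3). Steps~1 and~3 are correct and essentially identical to the paper's argument.

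Step~2, however, contains a genuine error in both the counting and the algebra. The cube $[0,R^k]^n$ meets $E(p,q)$ in $2^{kn}$ unit cubes, not $2^k$: the construction is a product, with an independent binary sequence in each of the $n$ coordinate directions, and the paper uses exactly this, writing $[0,R^k]^n\cap E(p,q)=\bigcup_{j=1}^{2^{kn}}Q_j$. With your count of only $2^k$ cubes one gets, using $R^{n/p-n/q}=2^{-n/q}$,
\[
m(f_1,p,q;cR^k)\;\gtrsim\;R^{k(n/p-n/q)}\,2^{k/q}
\;=\;2^{-kn/q}\cdot 2^{k/q}\;=\;2^{k(1-n)/q},
\]
which tends to $0$ for every $n\ge 2$ and therefore does \emph{not} show $f_1\notin V_\infty\mathcal{M}^p_q$. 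Your displayed identity $R^{k(n/p-n/q)}2^{k/q}=(R^{n/p-n/q}2^{n/q})^{k/q}$ is also simply false; the exponents do not match on either factor. The correct computation, with $2^{kn}$ cubes, is the paper's:
\[
m\Bigl(f_1,p,q;\tfrac{\sqrt{n}}{2}R^k\Bigr)\;\gtrsim\;R^{k(n/p-n/q)}\,2^{kn/q}
\;=\;\bigl(R^{n/p-n/q}2^{n/q}\bigr)^{k}\;=\;1,
\]
which indeed gives $\limsup_{r\to\infty}m(f_1,p,q;r)>0$. So the fix is to replace $2^k$ by $2^{kn}$ and redo the one-line exponent check; after that your Step~2 agrees with the paper.
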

\begin{proof}
Observe that, for every $N\in \mathbb{N}$, we have
\[
f_1-\chi_{\{\frac1N\le |f_1|\le N\}} f_1=f_1\chi_{\{|f_1|\le \frac1N\}\cup \{|f_1|>N\}}=0.
\]
Therefore, 
\begin{align}\label{eq:171226-1}
\lim_{N\to \infty}
\|f_1-\chi_{\{\frac1N\le |f_1|\le N\}} f_1\|_{\mathcal{M}^p_q}=0.
\end{align}
Since $f_1\in \mathcal{M}^p_q$ satisfies \eqref{eq:171226-1}, by virtue of Theorem \ref{thm:171213-1}, we have  $f_1 \in [V_0\mathcal{M}^{p_0}_{q_0}, V_0\mathcal{M}^{p_1}_{q_1}]_\theta$. According to Theorem \ref{thm:171213-1}, we only need to show that 
\begin{align}\label{eq:171226-19}
f_1\notin V^{(*)}\mathcal{M}^p_q \cup V_\infty\mathcal{M}^p_q. 
\end{align}
Let $N\in \mathbb{N}$. Then there exists a closed cube $Q=Q_N\subseteq E(p,q)$ of length $1$ such that $Q\subseteq \mathbb{R}^n \setminus B(0, 2N)$. Since 
\begin{align*}
\sup_{x\in \mathbb{R}^n}
\int_{B(x,1)} |f_1(y)|^q \chi_{\mathbb{R}^n\setminus B(0,N)}(y) \ dy 
&\ge \int_{B(c_Q, 1)} \chi_Q(y) \chi_{\mathbb{R}^n \setminus B(0, N)}(y) \ dy
\\
&=|Q \cap B(c_Q, 1)|
\ge  \left(\frac{2}{\sqrt{n}}\right)^n,
\end{align*}
we see that 
\[
\lim_{N\to \infty}
\sup_{x\in \mathbb{R}^n}
\int_{B(x,1)} |f_1(y)|^q \chi_{\mathbb{R}^n\setminus B(0,N)}(y) \ dy 
\neq 0.
\]
Hence, $f_1\notin V^{(*)} \mathcal{M}^p_q$. 
We now show that $f_1\notin V_\infty\mathcal{M}^p_q$. 
Let $k\in \mathbb{N}$. By a geometric observation, $[0, R^k]^n \cap E(p,q)=\cup_{j=1}^{2^{kn}} Q_j$, where $\{Q_j\}_{j=1}^{2^{kn}}$ is a collection of closed cube of length 1. Therefore,
\begin{align*}
m\left(f_1, p, q; \frac{\sqrt{n}R^k}{2}\right)
&\gtrsim 
|B(c_{[0, R^k]^n}, R^k)|^{\frac1p-\frac1q}
\left(
\int_{[0, R^k]^n} |f_1(y)|^q \ dy\right)^{\frac1q}
\\
&\sim 
R^{k\left(\frac{n}{p}-\frac{n}{q}\right)}
|[0, R^k]^n \cap E(p,q)|^{\frac1q} 
= 
R^{k\left(\frac{n}{p}-\frac{n}{q}\right)}
2^{\frac{kn}{q}}=1.
\end{align*}
Since $k$ is arbitrary, we see that 
$\lim\limits_{r\to \infty} m(f_1, p, q; r)\ne 0$, so $f_1\notin V_\infty\mathcal{M}^p_q$, as desired. 
\end{proof}

\begin{corollary}\label{cor:171226-2}
Keep the same assumption as in Corollary \ref{cor:171226-1}. 
Then, $f_1$ belongs to 
$[V_0\mathcal{M}^{p_0}_{q_0}, V_0\mathcal{M}^{p_1}_{q_1}]^\theta$, but 
$$f_1\notin [V^{(*)}\mathcal{M}^{p_0}_{q_0}, V^{(*)}\mathcal{M}^{p_1}_{q_1}]^\theta \cup [V_\infty\mathcal{M}^{p_0}_{q_0}, V_\infty\mathcal{M}^{p_1}_{q_1}]^\theta.$$
\end{corollary}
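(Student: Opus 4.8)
The plan is to read everything off the explicit descriptions of the second complex interpolation spaces provided by Theorem \ref{thm:171213-2}, reducing the claim to the membership facts already established in Corollary \ref{cor:171226-1}. First I would observe that the hypotheses \eqref{eq:171227-8} imply \eqref{eq:p0q0} and \eqref{eq:prop}, so Theorem \ref{thm:171213-2} applies with these parameters.

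For the positive assertion: by \eqref{eq:thm:171213-2-1} we have $[V_0\mathcal{M}^{p_0}_{q_0},V_0\mathcal{M}^{p_1}_{q_1}]^\theta=\mathcal{M}^p_q$, and since $f_1=\chi_{E(p,q)}\in\mathcal{M}^p_q$ (as recalled from \cite{SST11-1}), we conclude immediately that $f_1\in[V_0\mathcal{M}^{p_0}_{q_0},V_0\mathcal{M}^{p_1}_{q_1}]^\theta$.

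For the two non-membership statements, the key elementary observation is that $f_1$ takes only the values $0$ and $1$, so that for every choice $0<a\le 1\le b<\infty$ one has $\chi_{\{a\le|f_1|\le b\}}f_1=\chi_{E(p,q)}=f_1$. Now by \eqref{eq:thm:171213-2-2}, membership of $f_1$ in $[V_\infty\mathcal{M}^{p_0}_{q_0},V_\infty\mathcal{M}^{p_1}_{q_1}]^\theta$ requires $\chi_{\{a\le|f_1|\le b\}}f_1\in V_\infty\mathcal{M}^p_q$ for \emph{all} $0<a<b<\infty$; taking $a=\tfrac12$, $b=2$ this forces $f_1\in V_\infty\mathcal{M}^p_q$, which contradicts Corollary \ref{cor:171226-1}. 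Hence $f_1\notin[V_\infty\mathcal{M}^{p_0}_{q_0},V_\infty\mathcal{M}^{p_1}_{q_1}]^\theta$. The identical argument using \eqref{eq:thm:171213-2-3} together with the fact $f_1\notin V^{(*)}\mathcal{M}^p_q$ from Corollary \ref{cor:171226-1} yields $f_1\notin[V^{(*)}\mathcal{M}^{p_0}_{q_0},V^{(*)}\mathcal{M}^{p_1}_{q_1}]^\theta$, which completes the proof.

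There is essentially no obstacle here: once Theorem \ref{thm:171213-2} and Corollary \ref{cor:171226-1} are in hand, the proof is pure bookkeeping. The only point requiring a line of care is the remark that, because $f_1$ is $\{0,1\}$-valued, the localized truncation $\chi_{\{a\le|f_1|\le b\}}f_1$ coincides with $f_1$ for suitable $a,b$, so the universal quantifier over $0<a<b<\infty$ in \eqref{eq:thm:171213-2-2}--\eqref{eq:thm:171213-2-3} is genuinely violated.
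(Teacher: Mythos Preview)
Your proposal is correct and essentially identical to the paper's own proof: both use Theorem~\ref{thm:171213-2} together with the observation that $\chi_{\{a\le|f_1|\le b\}}f_1=f_1$ for suitable $a,b$ (the paper chooses $a=\tfrac12$, $b=1$) and the non-membership facts \eqref{eq:171226-19} from Corollary~\ref{cor:171226-1}. The only cosmetic difference is that for the positive assertion the paper invokes Corollary~\ref{cor:171226-1} and the inclusion $[\,\cdot\,,\,\cdot\,]_\theta\subseteq[\,\cdot\,,\,\cdot\,]^\theta$, whereas you appeal directly to the identification \eqref{eq:thm:171213-2-1}; both routes are one-liners.
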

\begin{proof}
The first assertion follows from Corollary \ref{cor:171226-1} and 
\[
[V_0\mathcal{M}^{p_0}_{q_0}, V_0\mathcal{M}^{p_1}_{q_1}]_\theta \subseteq [V_0\mathcal{M}^{p_0}_{q_0}, V_0\mathcal{M}^{p_1}_{q_1}]^\theta.
\] 
Combining \eqref{eq:171226-19}, Theorem \ref{thm:171213-2}, 
and the identity 
\[
\chi_{\{1/2\le |f_1|\le 1\}} f_1=f_1,
\]
we conclude that $f_1\notin [V^{(*)}\mathcal{M}^{p_0}_{q_0}, V^{(*)}\mathcal{M}^{p_1}_{q_1}]^\theta \cup [V_\infty\mathcal{M}^{p_0}_{q_0}, V_\infty\mathcal{M}^{p_1}_{q_1}]^\theta.$
\end{proof}

\begin{corollary}\label{cor:171226-3}
Keep the same assumption as in Corollary \ref{cor:171226-1}. 
Then,
$f_2$ belongs to 
$[V_\infty \mathcal{M}^{p_0}_{q_0}, V_\infty\mathcal{M}^{p_1}_{q_1}]_\theta$, but 
$f_2\notin [V^{(*)}\mathcal{M}^{p_0}_{q_0}, V^{(*)}\mathcal{M}^{p_1}_{q_1}]_\theta$. 
\end{corollary}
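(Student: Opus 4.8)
The plan is to verify the two membership claims in Corollary~\ref{cor:171226-3} by invoking the characterizations in Theorem~\ref{thm:171213-1}. For the positive assertion $f_2\in[V_\infty\mathcal{M}^{p_0}_{q_0},V_\infty\mathcal{M}^{p_1}_{q_1}]_\theta$, by \eqref{eq:171213-3} I need two things: first that $f_2\in V_\infty\mathcal{M}^p_q$, and second that $\lim_{N\to\infty}\|f_2-\chi_{\{1/N\le|f_2|\le N\}}f_2\|_{\mathcal{M}^p_q}=0$. The second is immediate since $f_2$ takes only the values $0$ and $1$, so $f_2-\chi_{\{1/N\le|f_2|\le N\}}f_2=0$ for every $N\ge1$. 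For the first, note $f_2=\sum_{j=1}^\infty\chi_{j!\mathbf{e}_1+[0,1]^n}$; the supporting cubes are pairwise disjoint and, crucially, for large $j$ they are separated by gaps of length $j!-( (j-1)!+1)\to\infty$. Hence for any fixed $r>0$, once $r$ is smaller than the minimal gap among the cubes meeting a ball of radius $r$, a ball $B(x,r)$ can intersect at most one of these unit cubes, so $m(f_2,p,q;r)\lesssim r^{n/p}$ uniformly, which $\to0$ as $r\to0$; and for $r\to\infty$ one computes $m(f_2,p,q;r)\lesssim r^{n/p}\cdot r^{-n/q}\cdot(\#\{\text{cubes in }B(x,r)\})^{1/q}$, and since the cube centers $j!$ grow super-exponentially, the number of cubes in any ball of radius $r$ is $O(\log r/\log\log r)$, so this bound is $\lesssim r^{n/p-n/q}(\log r)^{1/q}\to0$ because $p<q$. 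Thus $f_2\in V_\infty\mathcal{M}^p_q$ and the positive claim follows from Theorem~\ref{thm:171213-1}.

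For the negative assertion $f_2\notin[V^{(*)}\mathcal{M}^{p_0}_{q_0},V^{(*)}\mathcal{M}^{p_1}_{q_1}]_\theta$, by \eqref{eq:171213-4} it suffices to show $f_2\notin V^{(*)}\mathcal{M}^p_q$, i.e.\ that
\[
\lim_{N\to\infty}\sup_{x\in\mathbb{R}^n}\int_{B(x,1)}|f_2(y)|^q\chi_{\mathbb{R}^n\setminus B(0,N)}(y)\,dy\neq0.
\]
This is exactly parallel to the argument for $f_1\notin V^{(*)}\mathcal{M}^p_q$ in Corollary~\ref{cor:171226-1}: given $N$, choose $j$ with $j!>N+2$, let $Q=j!\mathbf{e}_1+[0,1]^n$, and take $x=c_Q$ the center of $Q$; then $Q\subseteq\mathbb{R}^n\setminus B(0,N)$ and
\[
\int_{B(c_Q,1)}|f_2(y)|^q\chi_{\mathbb{R}^n\setminus B(0,N)}(y)\,dy
\ge|Q\cap B(c_Q,1)|\ge\Bigl(\tfrac{2}{\sqrt{n}}\Bigr)^n>0,
\]
a bound independent of $N$. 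Hence the limit is bounded below by a positive constant and $f_2\notin V^{(*)}\mathcal{M}^p_q$, so $f_2\notin[V^{(*)}\mathcal{M}^{p_0}_{q_0},V^{(*)}\mathcal{M}^{p_1}_{q_1}]_\theta$.

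I expect the only real work to be the verification that $f_2\in V_\infty\mathcal{M}^p_q$, specifically the careful counting of how many of the translated unit cubes can fall inside an arbitrary ball $B(x,r)$ and the resulting decay estimate on $m(f_2,p,q;r)$ as $r\to\infty$. The super-exponential spacing $j!$ is what makes this work: had the cubes been placed at arithmetic or merely exponential positions, $m(f_2,p,q;r)$ would not tend to $0$ and $f_2$ would fail to be in $V_\infty\mathcal{M}^p_q$. Everything else — the identity making the $\chi_{\{1/N\le|f_2|\le N\}}$ condition trivial, and the $V^{(*)}$ obstruction via a single far-away unit cube — is routine and mirrors the treatment of $f_1$.
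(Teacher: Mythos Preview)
Your proof is correct and follows the same route as the paper: both reduce the two claims to checking $f_2\in V_\infty\mathcal M^p_q$, $f_2\notin V^{(*)}\mathcal M^p_q$, and the trivial truncation identity, then invoke Theorem~\ref{thm:171213-1}; the paper simply outsources the two membership verifications to \cite[Theorem 4.1]{AS}, whereas you supply direct arguments (the super-exponential spacing estimate for $V_\infty$ and the far-away unit cube for $V^{(*)}$). One slip to fix: in the Morrey convention used here one has $q<p$, not ``$p<q$'' as you wrote, so the correct reason that $r^{n/p-n/q}(\log r)^{1/q}\to 0$ is that $n/p-n/q<0$.
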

\begin{proof}
By a similar argument as in the proof of  \cite[Theorem 4.1]{AS}, we have $f_2\in V_\infty\mathcal{M}^p_q$ but $f_2 \notin V^{(*)} \mathcal{M}^p_q$. 
Moreover, $f_2$ satisfies 
\[
\lim_{N\to \infty}
\|f_2-f_2\chi_{\{1/N\le |f_2|\le N\}}\|_{\mathcal{M}^p_q}=0.
\]
Therefore, by virtue of Theorem \ref{thm:171213-1},
we have the desired conclusion. 
\end{proof}
\begin{corollary}\label{cor:171226-4}
Keep the same assumption as in Corollary \ref{cor:171226-1}. 
Then,
$f_2$ belongs to 
$[V_\infty \mathcal{M}^{p_0}_{q_0}, V_\infty\mathcal{M}^{p_1}_{q_1}]^\theta$, but 
$f_2\notin [V^{(*)}\mathcal{M}^{p_0}_{q_0}, V^{(*)}\mathcal{M}^{p_1}_{q_1}]^\theta$. 
\end{corollary}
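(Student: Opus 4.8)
The plan is to mirror the structure of Corollary~\ref{cor:171226-2}, using the second-interpolation description \eqref{eq:thm:171213-2-2}--\eqref{eq:thm:171213-2-3} together with the membership facts about $f_2$ already established in the proof of Corollary~\ref{cor:171226-3} (namely $f_2 \in V_\infty\mathcal{M}^p_q$ and $f_2 \notin V^{(*)}\mathcal{M}^p_q$). First I would record that $f_2 \in \mathcal{M}^p_q$ and, more importantly, that for every $0<a<b<\infty$ one has $\chi_{\{a \le |f_2| \le b\}} f_2 = f_2$ when $1 \le a \le b$ is compatible with the values $f_2$ actually takes, and more generally $\chi_{\{a\le|f_2|\le b\}}f_2$ is a truncation of the indicator-type function $f_2 = \sum_j \chi_{j!\mathbf{e}_1 + [0,1]^n}$; since $|f_2| \in \{0,1\}$ a.e., for any $0<a\le 1 \le b$ we simply get $\chi_{\{a\le|f_2|\le b\}}f_2 = f_2$, and for $a>1$ or $b<1$ we get $0$. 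In the nontrivial case the relevant truncation equals $f_2$ itself (or $0$), so membership of the truncation in $V_\infty\mathcal{M}^p_q$ reduces to the already-known statement $f_2 \in V_\infty\mathcal{M}^p_q$.

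Then the positive assertion $f_2 \in [V_\infty\mathcal{M}^{p_0}_{q_0}, V_\infty\mathcal{M}^{p_1}_{q_1}]^\theta$ follows directly from \eqref{eq:thm:171213-2-2}: we must check $f_2 \in \mathcal{M}^p_q$ and $\chi_{\{a\le|f_2|\le b\}}f_2 \in V_\infty\mathcal{M}^p_q$ for all $0<a<b<\infty$, and by the previous paragraph every such truncation is either $0$ or $f_2$, both of which lie in $V_\infty\mathcal{M}^p_q$. For the negative assertion $f_2 \notin [V^{(*)}\mathcal{M}^{p_0}_{q_0}, V^{(*)}\mathcal{M}^{p_1}_{q_1}]^\theta$, I would invoke \eqref{eq:thm:171213-2-3}: it suffices to exhibit one pair $0<a<b<\infty$ with $\chi_{\{a\le|f_2|\le b\}}f_2 \notin V^{(*)}\mathcal{M}^p_q$. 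Taking $a = 1/2$ and $b = 1$ gives $\chi_{\{1/2 \le |f_2| \le 1\}}f_2 = f_2$, and since $f_2 \notin V^{(*)}\mathcal{M}^p_q$ (established in the proof of Corollary~\ref{cor:171226-3} via the computation modeled on \cite[Theorem 4.1]{AS}), the membership condition in \eqref{eq:thm:171213-2-3} fails. Hence $f_2 \notin [V^{(*)}\mathcal{M}^{p_0}_{q_0}, V^{(*)}\mathcal{M}^{p_1}_{q_1}]^\theta$.

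I expect no serious obstacle here: the entire argument is a bookkeeping exercise combining Theorem~\ref{thm:171213-2} with facts about $f_2$ that have already been proved. The only point requiring a sentence of care is the identity $\chi_{\{a\le|f_2|\le b\}}f_2 \in \{0, f_2\}$ for all $0<a<b<\infty$, which is immediate from $|f_2| \in \{0,1\}$ a.e.; once this is observed, the positive part uses \eqref{eq:thm:171213-2-2} and $f_2 \in V_\infty\mathcal{M}^p_q$, and the negative part uses \eqref{eq:thm:171213-2-3} with the choice $a=1/2$, $b=1$ together with $f_2 \notin V^{(*)}\mathcal{M}^p_q$. If one wanted to avoid re-deriving $f_2 \notin V^{(*)}\mathcal{M}^p_q$, one could simply cite the proof of Corollary~\ref{cor:171226-3}, where exactly this was shown.

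\begin{proof}
Recall from the proof of Corollary~\ref{cor:171226-3} that $f_2\in V_\infty\mathcal{M}^p_q\subseteq\mathcal{M}^p_q$ and $f_2\notin V^{(*)}\mathcal{M}^p_q$. Since $|f_2|\in\{0,1\}$ almost everywhere, for every $0<a<b<\infty$ the truncation $\chi_{\{a\le|f_2|\le b\}}f_2$ equals either $0$ (if $a>1$ or $b<1$) or $f_2$ (if $a\le 1\le b$); in both cases it belongs to $V_\infty\mathcal{M}^p_q$. By \eqref{eq:thm:171213-2-2} this shows
\[
f_2\in[V_\infty\mathcal{M}^{p_0}_{q_0}, V_\infty\mathcal{M}^{p_1}_{q_1}]^\theta.
\]
On the other hand, taking $a=1/2$ and $b=1$ we have $\chi_{\{1/2\le|f_2|\le 1\}}f_2=f_2\notin V^{(*)}\mathcal{M}^p_q$, so $f_2$ fails the defining condition of the set on the right-hand side of \eqref{eq:thm:171213-2-3}. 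Therefore
\[
f_2\notin[V^{(*)}\mathcal{M}^{p_0}_{q_0}, V^{(*)}\mathcal{M}^{p_1}_{q_1}]^\theta,
\]
which completes the proof.
\end{proof}
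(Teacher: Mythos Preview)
Your proof is correct. The negative assertion is handled exactly as in the paper: the identity $\chi_{\{1/2\le|f_2|\le 1\}}f_2=f_2$ together with $f_2\notin V^{(*)}\mathcal{M}^p_q$ and \eqref{eq:thm:171213-2-3}. For the positive assertion you apply the characterization \eqref{eq:thm:171213-2-2} directly, observing that every truncation of $f_2$ is either $0$ or $f_2$; the paper instead invokes Corollary~\ref{cor:171226-3} (which already gives $f_2\in[V_\infty\mathcal{M}^{p_0}_{q_0},V_\infty\mathcal{M}^{p_1}_{q_1}]_\theta$) and the general inclusion $[\,\cdot\,,\,\cdot\,]_\theta\subseteq[\,\cdot\,,\,\cdot\,]^\theta$. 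Both routes are one-line applications of already-established facts; yours is marginally more self-contained, while the paper's avoids re-examining the truncations.
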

\begin{proof}
Note that, $f_2 \in [V_\infty \mathcal{M}^{p_0}_{q_0}, V_\infty\mathcal{M}^{p_1}_{q_1}]^\theta$ is a consequence 
of Corolary \ref{cor:171226-3} and 
\[
[V_\infty \mathcal{M}^{p_0}_{q_0}, V_\infty\mathcal{M}^{p_1}_{q_1}]_\theta
\subseteq 
[V_\infty \mathcal{M}^{p_0}_{q_0}, V_\infty\mathcal{M}^{p_1}_{q_1}]^\theta.
\]
Meanwhile, by the identity 
\[
\chi_{\{1/2\le |f_2|\le 1\}}f_2=f_2
\] 
and  $f_2 \in V^{(*)}\mathcal{M}^p_q$, we have 
$f_2 \notin [V^{(*)}\mathcal{M}^{p_0}_{q_0}, V^{(*)}\mathcal{M}^{p_1}_{q_1}]^\theta$. 
\end{proof}

\begin{corollary}\label{cor:171226-5}
Keep the same assumption as in Corollary 
\ref{cor:171226-1}. Then
\begin{align}\label{eq:171226-10}
f_3 \in [V_0\mathcal{M}^{p_0}_{q_0}, V_0\mathcal{M}^{p_1}_{q_1}]^\theta
\setminus [V_0\mathcal{M}^{p_0}_{q_0}, V_0\mathcal{M}^{p_1}_{q_1}]_\theta,
\end{align}
\[
f_3 \in [V^{(*)}\mathcal{M}^{p_0}_{q_0}, V^{(*)}\mathcal{M}^{p_1}_{q_1}]^\theta
\cap [V_\infty\mathcal{M}^{p_0}_{q_0}, V_\infty,\mathcal{M}^{p_1}_{q_1}]^\theta
\]
and
\begin{align}\label{eq:171226-5}
f_3 \notin [V^{(*)}\mathcal{M}^{p_0}_{q_0}, V^{(*)}\mathcal{M}^{p_1}_{q_1}]_\theta
\cup [V_\infty\mathcal{M}^{p_0}_{q_0}, V_\infty\mathcal{M}^{p_1}_{q_1}]_\theta.
\end{align}
\end{corollary}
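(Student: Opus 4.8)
The plan is to verify the six membership claims for $f_3(x) = |x|^{-n/p}$ by appealing to the characterizations in Theorems \ref{thm:171213-1} and \ref{thm:171213-2}. First I would record the elementary facts about $f_3$: it belongs to $\cM^p_q$ with $\|f_3\|_{\cM^p_q} \sim 1$, it is homogeneous of degree $-n/p$, and for any $0<a<b<\infty$ the set $\{a \le |f_3| \le b\}$ is an annulus $\{c_1 \le |x| \le c_2\}$, so $\chi_{\{a\le|f_3|\le b\}}f_3$ is a compactly supported bounded function. By Lemma \ref{lem:171227-1} this truncated function lies in $V_\infty\cM^p_q \cap V^{(*)}\cM^p_q$, which immediately gives the second displayed line: $f_3 \in [V^{(*)}\cM^{p_0}_{q_0},V^{(*)}\cM^{p_1}_{q_1}]^\theta \cap [V_\infty\cM^{p_0}_{q_0},V_\infty\cM^{p_1}_{q_1}]^\theta$ via \eqref{eq:thm:171213-2-2} and \eqref{eq:thm:171213-2-3}. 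The inclusion $f_3 \in [V_0\cM^{p_0}_{q_0},V_0\cM^{p_1}_{q_1}]^\theta = \cM^p_q$ in \eqref{eq:171226-10} is trivial since $f_3 \in \cM^p_q$.

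Next I would handle the negative first-complex-interpolation claims, which all rest on showing
\[
\lim_{N\to\infty}\|f_3 - \chi_{\{1/N\le|f_3|\le N\}}f_3\|_{\cM^p_q} \ne 0.
\]
The difference equals $f_3$ restricted to $\{|f_3| < 1/N\} \cup \{|f_3| > N\}$, i.e.\ to $\{|x| > N^{p/n}\} \cup \{|x| < N^{-p/n}\}$. By scale invariance of $f_3$ and of the Morrey norm under dilations, $\|f_3 \chi_{\{|x|<N^{-p/n}\}}\|_{\cM^p_q} = \|f_3\chi_{\{|x|<1\}}\|_{\cM^p_q}$, a fixed positive constant independent of $N$; indeed testing against a ball $B(0,r)$ with $r \le N^{-p/n}$ already recovers $\|f_3\|_{\cM^p_q}$. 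Hence the limit is bounded below by a positive constant, so by Theorem \ref{thm:171213-1} (all three parts share this same criterion) $f_3$ fails to lie in any of $[V_0\cM^{p_0}_{q_0},V_0\cM^{p_1}_{q_1}]_\theta$, $[V_\infty\cM^{p_0}_{q_0},V_\infty\cM^{p_1}_{q_1}]_\theta$, or $[V^{(*)}\cM^{p_0}_{q_0},V^{(*)}\cM^{p_1}_{q_1}]_\theta$. This yields the second half of \eqref{eq:171226-10} and all of \eqref{eq:171226-5} at once. (Alternatively, for \eqref{eq:171226-5} one could note $f_3 \notin V_\infty\cM^p_q$ and $f_3 \notin V^{(*)}\cM^p_q$ directly, since $m(f_3,p,q;r)$ is constant in $r$ by homogeneity; either route works and I would use whichever is shortest.)

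The main obstacle, such as it is, is purely bookkeeping: making sure the scaling computation $m(f_3,p,q;r) = \text{const}$ is done correctly and that the truncation sets are identified with the right annuli, since $f_3 = |x|^{-n/p}$ means $|f_3| > N \iff |x| < N^{-p/n}$ and $|f_3| < 1/N \iff |x| > N^{p/n}$. Once these are pinned down, everything follows mechanically from the stated theorems and Lemma \ref{lem:171227-1}; there is no genuinely hard step. I would present the proof in the order: (i) scaling/homogeneity facts for $f_3$; (ii) $f_3 \in \cM^p_q$ and the $[V_0\cdot,V_0\cdot]^\theta$ membership; (iii) the annulus observation and Lemma \ref{lem:171227-1} giving the two second-interpolation memberships; (iv) the lower bound on the truncation norm and the resulting four non-memberships via Theorems \ref{thm:171213-1} and \ref{thm:171213-2}.
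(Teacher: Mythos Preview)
Your main argument is correct and is essentially the paper's own proof: the paper also derives $f_3\in[V_0\cM^{p_0}_{q_0},V_0\cM^{p_1}_{q_1}]^\theta$ from $f_3\in\cM^p_q$, uses Lemma~\ref{lem:171227-1} on the annular truncation $\chi_{\{a\le|f_3|\le b\}}f_3\in L^\infty_{\rm c}$ for the two second-interpolation memberships, and obtains all three first-interpolation non-memberships from the failure of the truncation limit (the paper phrases \eqref{eq:171226-5} as following from the inclusion into $[V_0\cM^{p_0}_{q_0},V_0\cM^{p_1}_{q_1}]_\theta$, which is the same observation). Your scaling explanation for why $\|f_3\chi_{\{|f_3|>N\}}\|_{\cM^p_q}$ does not tend to zero is a welcome bit of detail that the paper leaves implicit.

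One genuine slip: your parenthetical alternative for \eqref{eq:171226-5} does \emph{not} work as stated. The claim ``$f_3\notin V^{(*)}\cM^p_q$'' is false. Indeed, for $|y|>N$ one has $|f_3(y)|^q=|y|^{-nq/p}\le N^{-nq/p}$, so
\[
\sup_{x\in\R^n}\int_{B(x,1)}|f_3(y)|^q\chi_{\R^n\setminus B(0,N)}(y)\,dy \lesssim N^{-nq/p}\to 0,
\]
and hence $f_3\in V^{(*)}\cM^p_q$. The reason you give (constancy of $m(f_3,p,q;r)$ in $r$) is the right diagnostic for $V_0$ and $V_\infty$ but says nothing about $V^{(*)}$, whose defining condition is of a different type. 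So for the $V^{(*)}$ non-membership you must go through the truncation criterion in Theorem~\ref{thm:171213-1}, exactly as in your primary route. (Minor: there are three non-memberships to establish, not four.)
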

\begin{proof}
Since $f_3\in \mathcal{M}^p_q$, by virtue of Theorem \ref{thm:171213-2}, we have $f_3 \in [V_0\mathcal{M}^{p_0}_{q_0}, V_0\mathcal{M}^{p_1}_{q_1}]^\theta$.
Note that $f_3$ fails to belong
to $[V_0{\mathcal M}^{p_0}_{q_0},V_0{\mathcal M}^{p_1}_{q_1}]_\theta$
because
\[
f_3=\lim_{N \to \infty}\chi_{[N^{-1},N]}(f)f
\]
fails in ${\mathcal M}^p_q$.

Let $0<a<b<\infty$. Since $\chi_{\{a\le |f_3|\le b\}}f_3 \in L^\infty_{\rm c}$, by virtue of Lemma \ref{lem:171227-1}, we have $\chi_{\{a\le |f_3|\le b\}}f_3\in V^{(*)}\mathcal{M}^p_q \cap V_\infty \mathcal{M}^p_q$. Therefore, according to Theorem \ref{thm:171213-2}, we have
$f_3 \in [V^{(*)}\mathcal{M}^{p_0}_{q_0}, V^{(*)}\mathcal{M}^{p_1}_{q_1}]^\theta \cap [V_\infty\mathcal{M}^{p_0}_{q_0}, V_\infty\mathcal{M}^{p_1}_{q_1}]^\theta$. 
Meanwhile, \eqref{eq:171226-5} follows immediately from 
\[
([V^{(*)}\mathcal{M}^{p_0}_{q_0}, V^{(*)}\mathcal{M}^{p_1}_{q_1}]_\theta
\cup [V_\infty\mathcal{M}^{p_0}_{q_0}, V_\infty\mathcal{M}^{p_1}_{q_1}]_\theta) 
\subseteq 
[V_0\mathcal{M}^{p_0}_{q_0}, V_0\mathcal{M}^{p_1}_{q_1}]_\theta
\]
and  \eqref{eq:171226-10}.
\end{proof}
\begin{corollary}\label{cor:171226-6}
Keep the same assumption as in Corollary \ref{cor:171226-1}. 
Then,
$f_4$ belongs to 
$[V^{(*)} \mathcal{M}^{p_0}_{q_0}, V^{(*)}\mathcal{M}^{p_1}_{q_1}]^\theta$, but 
$f_4\notin [V_\infty\mathcal{M}^{p_0}_{q_0}, V_\infty \mathcal{M}^{p_1}_{q_1}]^\theta$. 
\end{corollary}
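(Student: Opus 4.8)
The strategy is to translate both assertions, via Theorem \ref{thm:171213-2}, into elementary properties of $f_4$ itself. Since the translated sets $E_m+m!e_1$ are pairwise disjoint up to null sets, $f_4$ takes only the values $0$ and $1$, so for $0<a<b<\infty$ we have $\chi_{\{a\le|f_4|\le b\}}f_4=f_4$ if $a\le 1\le b$ and $\chi_{\{a\le|f_4|\le b\}}f_4=0$ otherwise. Hence, by \eqref{eq:thm:171213-2-3}, the membership $f_4\in[V^{(*)}\mathcal{M}^{p_0}_{q_0},V^{(*)}\mathcal{M}^{p_1}_{q_1}]^\theta$ is equivalent to the two statements $f_4\in\mathcal{M}^p_q$ and $f_4\in V^{(*)}\mathcal{M}^p_q$ (the function $0$ being trivially admissible), while by \eqref{eq:thm:171213-2-2}, applied with $a=\frac12$ and $b=1$, the failure of $f_4\in[V_\infty\mathcal{M}^{p_0}_{q_0},V_\infty\mathcal{M}^{p_1}_{q_1}]^\theta$ reduces to showing $f_4\notin V_\infty\mathcal{M}^p_q$.

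The first step is to show $\|\chi_{E_m}\|_{\mathcal{M}^p_q}\lesssim 1$ uniformly in $m$, by estimating $|B(x,r)|^{\frac1p-\frac1q}(\int_{B(x,r)}\chi_{E_m})^{\frac1q}$ in the four ranges $0<r\le m^{-nq/p}$, $m^{-nq/p}\le r\le 1$, $1\le r\le m$, and $r\ge m$. In each range one counts how many of the width-$m^{-nq/p}$ slabs of $E_m$ the ball $B(x,r)$ can meet — namely $O(1)$ when $r\le 1$ and $O(r)$ when $1\le r\le m$ — and uses the arithmetic identity $\frac{n}{p}-\frac{n}{q}+\frac1q(n-\frac{nq}{p})=0$, which is precisely why the exponent $nq/p$ makes $\chi_{E_m}$ critical at scale $m$. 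From this, $f_4\in\mathcal{M}^p_q$ follows: writing $Q_m:=[m!,m!+m]\times[0,m]^{n-1}$, which contains $E_m+m!e_1$, a ball meeting only one block $Q_m$ is controlled by $\|\chi_{E_m}\|_{\mathcal{M}^p_q}$, while a ball meeting several blocks, say with largest index $M$, must have radius $\gtrsim M!$ because the $Q_m$ are separated by factorially growing gaps, and then the factor $|B|^{\frac1p-\frac1q}$ — a negative power of a factorial — beats $(\sum_{m\le M}|E_m|)^{1/q}$, which grows only polynomially in $M$.

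Next, to get $f_4\in V^{(*)}\mathcal{M}^p_q$, fix $N$ and let $M_N$ be the least $m$ with $E_m+m!e_1\not\subseteq B(0,N)$, so that $M_N\to\infty$ as $N\to\infty$. For $N$ large, the blocks $Q_m$ with $m\ge M_N$ are pairwise more than $2$ apart, so any unit ball whose intersection with the support of $f_4$ reaches outside $B(0,N)$ meets exactly one block $Q_m$, and that one has $m\ge M_N$; since a unit ball meets at most $O(1)$ of the width-$m^{-nq/p}$ slabs, $\int_{B(x,1)}|f_4|^q\chi_{\mathbb{R}^n\setminus B(0,N)}\lesssim M_N^{-nq/p}$ uniformly in $x$. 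Letting $N\to\infty$ gives the vanishing condition that defines $V^{(*)}\mathcal{M}^p_q$. Finally, for $f_4\notin V_\infty\mathcal{M}^p_q$, use $f_4\ge\chi_{E_m+m!e_1}$ and test against the ball $B_m$ of radius $\sqrt n\,m$ centred at the centre of $Q_m$, so that $B_m\supseteq E_m+m!e_1$: then $|B_m|^{\frac1p-\frac1q}(\int_{B_m}|f_4|^q)^{\frac1q}\gtrsim m^{\frac np-\frac nq}|E_m|^{\frac1q}=m^{\frac np-\frac nq}\,m^{\frac nq-\frac np}\sim 1$, so $m(f_4,p,q;\sqrt n\,m)\gtrsim 1$ for every $m\in\mathbb{N}$, and since $\sqrt n\,m\to\infty$ we conclude $\limsup_{r\to\infty}m(f_4,p,q;r)>0$.

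The only genuinely delicate point is the uniform estimate $\|\chi_{E_m}\|_{\mathcal{M}^p_q}\lesssim 1$ of the first step — this is where the precise geometry of $E_m$ and the role of the exponent $nq/p$ are used, and it also underlies the fact that the test ball $B_m$ yields exactly the order $1$; everything else is bookkeeping built on the factorial separation of the blocks $Q_m$.
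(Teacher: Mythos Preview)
Your proof is correct and follows the same overall strategy as the paper --- reduce via Theorem~\ref{thm:171213-2} to checking $f_4\in V^{(*)}\mathcal{M}^p_q$ and $f_4\notin V_\infty\mathcal{M}^p_q$ --- but the execution differs in two useful ways. First, you explicitly establish the uniform bound $\|\chi_{E_m}\|_{\mathcal{M}^p_q}\lesssim 1$ and deduce $f_4\in\mathcal{M}^p_q$; the paper takes this membership for granted. Second, for the $V^{(*)}$ vanishing condition the paper estimates $|E_m\cap B(x-m!e_1,1)|$ and then \emph{sums} these bounds over all blocks outside $B(0,N)$, relying on convergence of the resulting series (and in fact the paper's computation of $|E_m|$ drops a factor of $m$, writing $|E_m|=m^{-nq/p}m^{n-1}$ rather than $m\cdot m^{-nq/p}m^{n-1}$, so that the claimed summand $m^{-nq/p-1}$ should be $m^{-nq/p}$, which need not be summable when $nq/p\le 1$). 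Your argument sidesteps this entirely: by exploiting the factorial separation you observe that a single unit ball can meet at most one block $Q_m$ with $m\ge M_N$, so there is nothing to sum and the bound $\lesssim M_N^{-nq/p}\to 0$ follows directly. The treatment of $f_4\notin V_\infty\mathcal{M}^p_q$ is essentially identical to the paper's, using the same test ball centred on $Q_m$ at scale $\sim m$.
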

\begin{proof}
Let $x\in \mathbb{R}^n$ and $N\in \mathbb{N}$. 
Note that, if $m\in \mathbb{N}$ satisfies   
\[
(m!+m)^2+(n-1)m^2<N,
\]
then for every 
$y \in [m!, m!+m]\times [0, m]^{n-1}$, we have
$|y|<N$. Consequently, 
\begin{align}\label{eq:171227-7}
\int_{B(x,1)} &|f_4(y)|^q \chi_{\mathbb{R}^n\setminus B(0, N)}(y) \ dy 
\nonumber
\\
&=
\sum_{m\in \mathbb{N}}
\int_{B(x,1)} \chi_{E_m}(y-m!e_1) \chi_{\mathbb{R}^n\setminus B(0, N)}(y) \ dy 
\nonumber
\\
&=
\sum_{m\in \mathbb{N}, (m!+m)^2+(n-1)m^2\ge N}
\int_{B(x,1)} \chi_{E_m}(y-m!e_1) \chi_{\mathbb{R}^n\setminus B(0, N)}(y) \ dy 
\nonumber
\\
&\le 
\sum_{m\in \mathbb{N}, (m!+m)^2+(n-1)m^2\ge N}
\int_{B(x,1)} \chi_{E_m}(y-m!e_1) \ dy.
\end{align}
By a geometric observation, we see that
\begin{align}\label{eq:171227-17}
\int_{B(x,1)} \chi_{E_m}(y-m!e_1) \ dy.
&=
\int_{\mathbb{R}^n}
\chi_{E_m}(y) \chi_{B(x,1)}(y+m!e_1) \ dy
\nonumber
\\
&=
\int_{\mathbb{R}^n}
\chi_{E_m}(y) \chi_{B(x-m!e_1,1)}(y) \ dy
\nonumber
\\
&=
|E_m \cap B(x-m!e_1,1)| 
\nonumber
\\
&\lesssim 
\frac{|E_m|}{m^n}
=
\frac{m^{-\frac{nq}{p}} \cdot m^{n-1}}{m^n}
=m^{-\frac{nq}{p}-1}.
\end{align}
Combining \eqref{eq:171227-7} and \eqref{eq:171227-17}, we get 
\[
\sup_{x\in \mathbb{R}^n}
\int_{B(x,1)} |f_4(y)|^q \chi_{\mathbb{R}^n\setminus B(0, N)}(y) \ dy 
\lesssim
\sum_{m\in \mathbb{N}, (m!+m)^2+(n-1)m^2\ge N}
m^{-\frac{nq}{p}-1}.
\]
Since $\sum\limits_{m\in \mathbb{N}}
m^{-\frac{nq}{p}-1}<\infty$, we have
\[
\lim_{N\to \infty}
\sup_{x\in \mathbb{R}^n}
\int_{B(x,1)} |f_4(y)|^q \chi_{\mathbb{R}^n\setminus B(0, N)}(y) \ dy 
=0,
\]
so $f_4\in V^{(*)}\mathcal{M}^p_q$. According to Remark \ref{rem:171220-1}, we have
\[
\chi_{\{a\le |f_4|\le b\}}f_4 \in V^{(*)}\mathcal{M}^p_q,
\]
for every $0<a<b<\infty$. 
Therefore, by virtue of \eqref{eq:thm:171213-2-3}, we conclude that 
$$f_4 \in [V^{(*)} \mathcal{M}^{p_0}_{q_0}, V^{(*)}\mathcal{M}^{p_1}_{q_1}]^\theta.$$ 
We now prove that 
$f_4 \notin [V_\infty \mathcal{M}^{p_0}_{q_0}, V_\infty\mathcal{M}^{p_1}_{q_1}]^\theta$. 
For every $N\in \mathbb{N}$, we define 
$$Q:=[N!, N!+N]\times [0, N]^{n-1}.$$ 
Since $|E_N|=N^{n-\frac{nq}{p}}$, we have
\begin{align*}
m(f_4, p, q;\sqrt{n}N/2)
&\gtrsim 
N^{\frac{n}{p}-\frac{n}{q}}
\left(
\int_{B(c_Q, \sqrt{n}N/2)}
|f_4(y)|^q  \ dy
\right)^{\frac1q}
\\
&\ge 
N^{\frac{n}{p}-\frac{n}{q}}
\left(
\int_{Q}
\chi_{E_N}(y-N!e_1)  \ dy
\right)^{\frac1q}
\\
&=
N^{\frac{n}{p}-\frac{n}{q}}
|E_N|^{\frac1q}=1,
\end{align*}
so $\lim\limits_{r\to \infty} m(f_4, p, q;r)\neq 0$. 
Therefore, $f_4 \notin V_\infty\mathcal{M}^p_q$. 
Combining this with \eqref{eq:thm:171213-2-2} and
\[
\chi_{\{1/2\le |f_4|\le 1\}}f_4=f_4,
\]
we conclude that $f_4 \notin [V_\infty \mathcal{M}^{p_0}_{q_0}, V_\infty\mathcal{M}^{p_1}_{q_1}]^\theta$. 
\end{proof}

\begin{corollary}\label{cor:171226-7}
Keep the same assumption as in Corollary \ref{cor:171226-1}. 
Then,
$f_4$ belongs to 
$[V^{(*)} \mathcal{M}^{p_0}_{q_0}, V^{(*)}\mathcal{M}^{p_1}_{q_1}]_\theta$, but 
$f_4\notin [V_\infty\mathcal{M}^{p_0}_{q_0}, V_\infty \mathcal{M}^{p_1}_{q_1}]_\theta$. 
\end{corollary}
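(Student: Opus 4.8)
The plan is to derive everything from Theorem \ref{thm:171213-1} together with the two properties of $f_4$ already established inside the proof of Corollary \ref{cor:171226-6}, namely that $f_4 \in V^{(*)}\mathcal{M}^p_q$ while $f_4 \notin V_\infty\mathcal{M}^p_q$. For the membership in $[V^{(*)}\mathcal{M}^{p_0}_{q_0}, V^{(*)}\mathcal{M}^{p_1}_{q_1}]_\theta$, by the description \eqref{eq:171213-4} I need two things: $f_4 \in V^{(*)}\mathcal{M}^p_q$, which is already available, and $\lim_{N\to\infty}\|f_4 - \chi_{\{1/N \le |f_4| \le N\}}f_4\|_{\mathcal{M}^p_q} = 0$.

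For the second item I would first record that the sets $E_m + m!\,e_1$ appearing in the definition of $f_4$ are pairwise disjoint up to null sets: since $E_m \subseteq [0,m]^n$, we have $E_m + m!\,e_1 \subseteq [m!, m!+m]\times[0,m]^{n-1}$, and $m! + m \le (m+1)!$ for every $m \in \mathbb{N}$ (strictly for $m \ge 2$), so consecutive boxes $[m!, m!+m]\times[0,m]^{n-1}$ do not overlap. Hence $f_4$ takes only the values $0$ and $1$, so that $\chi_{\{1/N \le |f_4| \le N\}}f_4 = f_4$ for every integer $N \ge 1$ and the displayed limit is trivially zero. Invoking \eqref{eq:171213-4} of Theorem \ref{thm:171213-1} then gives $f_4 \in [V^{(*)}\mathcal{M}^{p_0}_{q_0}, V^{(*)}\mathcal{M}^{p_1}_{q_1}]_\theta$.

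For the non-membership in $[V_\infty\mathcal{M}^{p_0}_{q_0}, V_\infty\mathcal{M}^{p_1}_{q_1}]_\theta$ I would simply combine the inclusion $[V_\infty\mathcal{M}^{p_0}_{q_0}, V_\infty\mathcal{M}^{p_1}_{q_1}]_\theta \subseteq V_\infty\mathcal{M}^p_q$, which is part of \eqref{eq:171213-3}, with the fact $f_4 \notin V_\infty\mathcal{M}^p_q$ obtained in Corollary \ref{cor:171226-6} from the lower estimate $m(f_4, p, q; \sqrt{n}\,N/2) \gtrsim N^{n/p - n/q}|E_N|^{1/q} = 1$. This immediately yields $f_4 \notin [V_\infty\mathcal{M}^{p_0}_{q_0}, V_\infty\mathcal{M}^{p_1}_{q_1}]_\theta$. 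Since all the analytic substance is inherited from Corollary \ref{cor:171226-6}, there is no real obstacle; the only delicate point is the elementary bookkeeping showing that $f_4$ is $\{0,1\}$-valued, which is exactly what makes the truncation identity $\chi_{\{1/N \le |f_4| \le N\}}f_4 = f_4$ hold and thus lets Theorem \ref{thm:171213-1} apply.
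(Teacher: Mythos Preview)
Your proof is correct and follows essentially the same approach as the paper: both use $f_4\in V^{(*)}\mathcal{M}^p_q$ from Corollary~\ref{cor:171226-6}, the truncation identity coming from the fact that $f_4$ is $\{0,1\}$-valued, and then \eqref{eq:171213-4}. For the non-membership, the paper routes through the inclusion $[V_\infty\mathcal{M}^{p_0}_{q_0}, V_\infty\mathcal{M}^{p_1}_{q_1}]_\theta \subseteq [V_\infty\mathcal{M}^{p_0}_{q_0}, V_\infty\mathcal{M}^{p_1}_{q_1}]^\theta$ together with Corollary~\ref{cor:171226-6}, whereas you go directly via $[V_\infty\mathcal{M}^{p_0}_{q_0}, V_\infty\mathcal{M}^{p_1}_{q_1}]_\theta \subseteq V_\infty\mathcal{M}^p_q$ from \eqref{eq:171213-3} and $f_4\notin V_\infty\mathcal{M}^p_q$; both arguments ultimately rest on the same fact and are equally valid.
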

\begin{proof}
In the proof of Corollary \ref{cor:171226-6}, it is shown that $f_4 \in V^{(*)}\mathcal{M}^p_q$. Moreover, for every $N\in \mathbb{N}$, we have
\[
\chi_{\{|f_4|<1/N\}\cup \{|f_4|> N\}}f_4=0,
\]
so $\lim\limits_{N\to \infty} \|f_4-\chi_{\{1/N\le |f_4|\le N\}}f_4\|_{\mathcal{M}^p_q}=0$. Therefore, by \eqref{eq:171213-4}, we have $$f_4 \in [V^{(*)} \mathcal{M}^{p_0}_{q_0}, V^{(*)}\mathcal{M}^{p_1}_{q_1}]_\theta.$$
The second assertion follows from  
\[
[V_\infty\mathcal{M}^{p_0}_{q_0}, V_\infty \mathcal{M}^{p_1}_{q_1}]_\theta
\subseteq 
[V_\infty\mathcal{M}^{p_0}_{q_0}, V_\infty \mathcal{M}^{p_1}_{q_1}]^\theta
\]
and Corollary \ref{cor:171226-6}.
\end{proof}

\end{document}